\newtheorem{thm}{Theorem}
\newtheorem{cnj}[thm]{Conjecture}
\newtheorem{cor}[thm]{Corollary}
\newtheorem{fct}[thm]{Fact}
\newtheorem{lem}[thm]{Lemma}
\def\b{{\beta}}
\def\f{{\phi}}
\def\i{{\iota}}
\def\k{{\kappa}}
\def\l{{\lambda}}
\def\r{{\rho}}
\def\s{{\sigma}}
\def\w{{\omega}}
\def\x{{\chi}}
\def\cC{{\cal C}}
\def\cM{{\cal M}}
\def\cP{{\cal P}}
\def\zN{{\mathbb N}}
\def\bz{{\mathbf{0}}}
\def\b1{{\mathbf{1}}}
\def\deg{{\sf deg}}
\def\diam{{\sf diam}}
\def\dist{{\sf dist}}
\def\ecc{{\sf ecc}}
\def\lev{{\sf level}}
\def\wt{{\sf wt}}
\def\rar{{\rightarrow}}
\def\ssq{{\hspace{0.02in} \square\hspace{0.02in}}}
\def\sse{{\subseteq}}
\definecolor{blu}{RGB}{  51,153,255}
\definecolor{brwn}{RGB}{140, 70, 20}
\definecolor{gren}{RGB}{  0,140, 10}
\newcommand{\pf}[1]{\textcolor{blue}{\sf{#1}}}
\newcommand{\gh}[1]{\textcolor{gren}{\sf{#1}}}
\newcommand{\mt}[1]{\textcolor{red}{\sf{#1}}}
\pgfmathsetmacro\Radi{6}
\pgfmathsetmacro\radi{3}
\begin{document}

\title{Cup Stacking in Graphs} 

\author{
Paul Fay
    \thanks{
        Department of Mathematics and Applied Mathematics, Virginia Commonwealth University, Richmond, Virginia, USA}
    \thanks{
        \texttt{fayph@vcu.edu}}
\and
Glenn Hurlbert 
    \footnotemark[1]
    \thanks{
        \texttt{ghurlbert@vcu.edu} (corresponding author), ORCID ID 0000-0003-2906-7770}
\and
Maya Tennant 
    \footnotemark[1]
    \thanks{
        \texttt{tennantm@vcu.edu}}
}

\date{}

\maketitle

\doublespacing

\begin{abstract}
Here we introduce a new game on graphs, called cup stacking, following a line of what can be considered as $0$-, $1$-, or $2$-person games such as chip firing, percolation, graph burning, zero forcing, cops and robbers, graph pebbling, and graph pegging, among others.
It can be more general, but the most basic scenario begins with a single cup on each vertex of a graph.
(This simplification coincides with an earlier game devised by Gordon Hamilton.)
For a vertex with $k$ cups on it we can move all its cups to a vertex at distance $k$ from it, provided the second vertex already has at least one cup on it.
The object is to stack all cups onto some pre-described target vertex.
We say that a graph is stackable if this can be accomplished for all possible target vertices.
    
In this paper we study cup stacking on many families of graphs, developing a characterization of stackability in graphs and using it to prove the stackability of complete graphs, paths, cycles, grids, the Petersen graph, many Kneser graphs, some trees, cubes of dimension up to 20, ``somewhat balanced'' complete $t$-partite graphs, and Hamiltonian diameter two graphs.
Additionally we use the Gallai-Edmonds Structure Theorem, the Edmonds Blossom Algorithm, and the Hungarian algorithm to devise a polynomial algorithm to decide if a diameter two graph is stackable.
    
Our proof that cubes up to dimension 20 are stackable uses Kleitman's Symmetric Chain Decomposition and the new result of Merino, M\"utze, and Namrata that all generalized Johnson graphs (excluding the Petersen graph) are Hamiltonian.
We conjecture that all cubes and higher-dimensional grids are stackable, and leave the reader with several open problems, questions, and generalizations.
\end{abstract}

\begin{quote}
{\bf Keywords:}
cup stacking, matchings, Blossom algorithm, symmetric chain decomposition\\
{\bf MSC2020:}
05C99 (Primary) 05C15, 05C70 (Secondary)
\end{quote}

\newpage

\section{Introduction}
\label{s:Intro}

Cup stacking, also known as sport stacking or speed stacking, has become a popular sport involving the stacking and un-stacking of cups in various formations, in certain order, and in minimum time (see \cite{NYT,Stacker}).
Here we introduce a graph theoretic version of the stacking game.\footnote{We are indebted to one of the referees, who pointed us to a similar game, invented earlier by Hamilton \cite{Hamil}, called Frog Jumping; the special case of that game involving a single {\it lazy frog} coincides with the special case of our game involving the configuration of exactly one cup per vertex.
Subsequently, the Master's thesis \cite{Vesel} broadened Hamilton's path game to all graphs, explicitly studying the game on dandelions, which we define and discuss below.}

There are many varieties of what may be called ``moving things around in graphs'', including cops and robbers \cite{BonaNowa}, graph pebbling \cite{HurlKent}, zero forcing \cite{BBFHHSVV}, chip firing \cite{BjoLovSho}, graph burning \cite{Bonato}, and graph pegging \cite{HeKhMoWo}, among others.
Even network optimization and network flow can fall under in this umbrella, as can one of the world's oldest board games, Mancala \cite{JonTaaTon,Russ}.
In each scenario, we begin with a configuration of objects that are placed on the vertices of a graph $G$ (modeled as a nonnegative function on the vertices) and a set of target vertices to which we must move the objects.
Each scenario has different rules for moving objects along the edges of the graph --- movement may cost money, objects may be consumed in transit, edges may have capacities, or conditions on neighboring vertices may exist --- and sometimes the target(s) can move, controlled, say, by an adversary.
The objective may simply be to reach the target(s), or to do so at minimum cost or in minimum time, for example.
Our setup for cup stacking is as follows.

We will work exclusively with connected graphs.
Let $G$ be such a graph, with objects called {\it cups}, which are placed on the vertices of $G$.
A {\it configuration} $C$ on $G$ is a function $C:V(G)\rar\zN$ that encodes the number $C(v)$ of cups on each vertex $v$.
A {\it cup stacking move} from vertex $u$ to vertex $v$, denoted $u\mapsto v$, can be made if both vertices have at least one cup and $\dist(u,v)=C(u)$, and consists of moving all the cups from $u$ onto $v$.
Because of the distance condition, we know that $u\mapsto v$ can be thought of as carrying the $C(u)$ cups from $u$ to $v$ along a $uv$-{\it geodesic} (minimum length $uv$-path) $P(u,v)$ of length $C(u)$.
The resulting configuration $C'$ satisfies $C'(u)=0$, $C'(v)=C(u)+C(v)$, and $C'(w)=C(w)$ otherwise.
For a target vertex $r$, we say that $G$ is $(C,r)$-{\it stackable} if it is possible to stack every cup on $r$; it is {\it $C$-stackable} if it $(C,r)$-stackable for every $r$ with $C(r)>0$.

We define the configuration $\b1$ to have one cup on each vertex of $G$ (we will use this notation for any size graph).
When $C=\b1$, we will drop $C$ from the notation to say $r$-stackable and stackable, respectively.
The focus of this paper is to study the case that $C=\b1$.
However, it is important to note that, in order to do so, we still need to consider the general case (e.g. see Figure \ref{fig:StackPart}).

A simple observation when $C=\b1$, for example, is that the tree $K_{1,m}$ is $r$-stackable if and only if $m\le 2$ or $r$ is not a leaf, and thus $K_{1,m}$ is stackable if and only if $m\le 2$.
More generally we have the following fact.

\begin{fct}
\label{f:Dom}
If $r$ is a dominating vertex in a graph $G$ on $n$ vertices then $G$ is $r$-stackable. \end{fct}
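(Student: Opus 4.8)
The plan is to use the most direct strategy imaginable: move the single cup sitting on each non-target vertex straight onto $r$, one vertex at a time. Since $r$ is dominating, every vertex $u \neq r$ satisfies $\dist(u,r)=1$. In the starting configuration $\b1$ each vertex carries exactly one cup, so $C(u)=1=\dist(u,r)$, which is precisely the condition needed to license the move $u \mapsto r$ (the only other requirement being that $r$ already holds a cup, which it does, since $C(r)=1>0$ initially).

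First I would fix an arbitrary ordering $u_1,\dots,u_{n-1}$ of the vertices other than $r$ and perform the moves $u_1\mapsto r,\ u_2\mapsto r,\ \dots,\ u_{n-1}\mapsto r$ in turn. The key bookkeeping step is to check that each move remains legal when it is executed. The crucial observation is that this strategy never deposits a cup on any non-target vertex: every move empties some $u_i$ onto $r$, so at the moment we process $u_i$ it still holds exactly its original single cup, giving $C(u_i)=1=\dist(u_i,r)$ as required. Meanwhile $r$ only ever gains cups, so it always has at least one cup present, satisfying the second precondition of a legal move. After the $(n-1)$st move, every cup originally on $V(G)\setminus\{r\}$ has been transferred to $r$, which together with its own starting cup accounts for all $n$ cups, witnessing that $G$ is $r$-stackable.

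There is essentially no obstacle here — the argument is immediate once one notes that a dominating vertex places every source at exactly distance $1$, matching the unit cup count of the $\b1$ configuration. The only point worth stating explicitly, so that the reader sees why no distance condition is ever violated mid-process, is the invariant that non-target vertices are never replenished, so their cup counts stay at $1$ until they are emptied. I would present the proof as a one- or two-sentence direct construction, perhaps remarking that the order of the moves is completely irrelevant.
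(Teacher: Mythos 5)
Your proof is correct and is exactly the direct argument the paper leaves implicit (the fact is stated without proof as a ``simple observation''): since $r$ dominates, every other vertex is at distance $1$ with exactly one cup, so the moves $u_i\mapsto r$ are legal in any order, and your invariant that non-target vertices are never replenished settles the only point needing checking. Equivalently, your construction is the singleton-parts instance of the paper's later Stacking Partition Lemma, but no such machinery is needed here.
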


Equally simple is the following.

\begin{fct}
\label{f:VTrans}
If $G$ is a vertex transitive graph and $r\in V(G)$ then $G$ is stackable if and only if $G$ is $r$-stackable.
\end{fct}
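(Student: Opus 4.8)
The plan is to exploit the defining feature of vertex transitivity---that the automorphism group of $G$ acts transitively on $V(G)$---together with the fact that graph automorphisms are isometries, in order to transport an entire stacking sequence from one target vertex to any other. The forward implication is immediate: if $G$ is stackable then by definition it is $r$-stackable for every target, in particular the given $r$. So the whole content lies in the reverse direction.

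Assume $G$ is $r$-stackable and let $s\in V(G)$ be arbitrary. By vertex transitivity I would choose an automorphism $\phi$ of $G$ with $\phi(r)=s$. For any configuration $C$, define its pushforward $\phi_*C$ by $(\phi_*C)(w)=C(\phi^{-1}(w))$. Two observations drive the argument. First, since $\b1$ is constant and $\phi$ permutes $V(G)$, we have $\phi_*\b1=\b1$, so $\phi$ fixes the starting configuration. Second, I would verify that $\phi$ carries legal moves to legal moves: because $\dist(\phi(u),\phi(v))=\dist(u,v)$ for every automorphism, a move $u\mapsto v$ that is legal under $C$ (that is, $C(u),C(v)\ge 1$ and $\dist(u,v)=C(u)$) pushes forward to the move $\phi(u)\mapsto\phi(v)$, which is legal under $\phi_*C$ and yields exactly $\phi_*C'$, where $C'$ is the result of the original move.

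Combining these, I would take a sequence of moves witnessing $r$-stackability, say $\b1=C_0\to C_1\to\cdots\to C_N$ with $C_N$ supported entirely on $r$, and apply $\phi_*$ termwise. By the first observation the resulting sequence again starts at $\b1$; by the second, each step $\phi_*C_i\to\phi_*C_{i+1}$ is a legal cup stacking move; and the final configuration $\phi_*C_N$ is supported entirely on $\phi(r)=s$. Hence $G$ is $s$-stackable, and since $s$ was arbitrary, $G$ is stackable.

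There is essentially no hard step here; the only thing requiring care is the bookkeeping in the second observation---checking that the pushforward $\phi_*C'$ of the post-move configuration agrees with the configuration obtained by performing $\phi(u)\mapsto\phi(v)$ on $\phi_*C$. This is a routine verification of the three cases (the source, the sink, and all other vertices) in the definition of a cup stacking move, using only that $\phi$ is a distance-preserving bijection of $V(G)$.
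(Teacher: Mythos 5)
Your proof is correct, and it is precisely the standard automorphism-transport argument that the paper implicitly relies on: the paper states this Fact without proof (deeming it ``equally simple''), and your pushforward construction --- using that automorphisms are isometries, that $\b1$ is invariant under any vertex permutation, and that legal moves push forward to legal moves --- is the intended justification. The case-checking you flag at the end (source, sink, other vertices) is indeed routine and completes the argument.
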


Together, Facts \ref{f:Dom} and \ref{f:VTrans} imply, for example, that all complete graphs are stackable.
Interestingly, Veselovac \cite{Vesel} found an infinite family of trees that are not $r$-stackable for any $r$.

\subsection{Definitions}
\label{ss:Defs}

We mostly follow standard graph theory terminology and notation, viewing a graph $G=(V,E)$ as a set of {\it vertices} $V$ together with a set of {\it edges} $E$, which are unordered pairs of vertices.
If necessary to identify to which graph a set of vertices or edges belongs, we may write $V(G)$ and $E(G)$ for the vertices and edges of the graph $G$.
We say that vertices $u$ and $v$ are {\it adjacent} in $G$ if $uv\in E(G)$ (note that we write $uv$ as a simplification of $\{u,v\}$).
We tend to save subscripts to denote the number of vertices of a graph; for example, $P_n$ is the path on $n$ vertices and thus has {\it length} $n-1$.

The {\it open neighborhood} of a vertex $v$ is denoted $N(v)$ and consists of the set of vertices that are adjacent to $v$, and the {\it degree} of $v$ is denoted $\deg(v)$, and equals $|N(v)|$.
The {\it closed neighborhood} of $v$ is denoted $N[v]$ and equals $N(v)\cup\{v\}$.
(Each of these can be written with the subscript $G$ --- e.g. $N_G(v)$, etc. --- if necessary to avoid confusion.)
We say that $v$ is a {\it dominating vertex} in $G$ if $N[v]=V(G)$.
More generally, for any $S\subset V(G)$ we write $N(S) = \cup_{v\in S}N(s)$ and $N[S]=N(S)\cup S$; then $S$ is a {\it dominating set} if $N[S]=V(G)$.

The {\it distance} between two vertices $u$ and $v$ is denoted $\dist(u,v)$, and counts the length of the shortest path with endpoints $u$ and $v$.
If necessary to distinguish distances in different graphs containing $u$ and $v$, we will write $\dist_G(u,v)$ to identify distance in the graph $G$.
The {\it diameter} of a graph $G$ is denoted $\diam(G)$ and counts the largest distance between two vertices of $G$; i.e. $\diam(G) = \max_{u,v}\dist_G(u,v)$.

The {\it Cartesian product} of two graphs $G$ and $H$ is denoted $G\ssq H$ and has vertex set $\{(x,y)\mid x\in V(G), y\in V(H)\}$ and edge set $\{(x,y)(x,y')\mid yy'\in E(H)\}\cup \{(x,y)(x',y)\mid xx'\in V(G)\}$.
The $d$-{\it dimensional cube} $Q^d$ (or $d$-{\it cube}) has vertex set all binary $d$-tuples $(a_1,\ldots,a_d)$ and edge set all pairs of vertices whose coordinates differ in exactly one position.
Thus $Q^0$ is a single vertex, $Q^1$ is isomorphic to $K_2$ (the complete graph on 2 vertices), $Q^2$ is isomorphic to $C_4$ (the cycle on 4 vertices), and $Q^3$ looks like the bordering edges of a 3-dimensional box.
As can be readily seen, we can also write $Q^d = Q^1\ssq Q^{d-1}$ or, more generally, $Q^d = Q^p\ssq Q^q$ for any natural numbers for which $p+q=d$.
Furthermore, we can identify $V(Q^d)$ with the family of all subsets of $\{1,\ldots,d\}$ by associating the set $A$ with the $d$-tuple defined by $a_i=1$ if and only if $i\in A$.
In this way, every $uv\in E(Q^d)$ corresponds to a pair of sets $U$ and $V$ for which $|U\triangle V|=1$.
In general, we have $\dist_{Q^d}(u,v)=|U\triangle V|$; in particular we have $\dist(u,\bz)=|U|$,
where $\bz$ denotes the all-zeros vertex.

In this paper we will assume that all variables mentioned are nonnegative integers.

\subsection{Results}
\label{ss:Results}

Let $C$ be any configuration of cups on $G$.
We define the {\it size} of $C$ to be $|C|=\sum_w C(w)$.
A configuration $D$ is called a {\it sub-configuration} of $C$ if $D(w)\le C(w)$ for every vertex $w$.
For a target $r$ of $G$, a subgraph $H$ of $G$, and a sub-configuration $D$ of $C$, $H$ is called $(D,r)$-{\it feasible} if either $D=\emptyset$ or there is some vertex $s$ of $H$ such that $H$ is $(D,s)$-stackable and $\dist_G(s,r)=|D|$.

We say that two configurations $D_1$ and $D_2$ are {\it disjoint} (and write $D_1\cap D_2=\emptyset$) if, for every vertex $w$, either $D_1(w)=0$ or $D_2(w)=0$.
Additionally, we define the configuration $D=D_1+D_2$ by $D(v)=D_1(v)+D_2(v)$.
Thus we say that the set of configurations $\{D_1,\ldots,D_k\}$ {\it partitions} $D$ if they are pairwise disjoint and $\sum_{i=1}^kD_i=D$.
Furthermore, we define the configuration $C-r$ to be $(C-r)(r)=0$ and $(C-r)(v)=C(v)$ otherwise.
For a graph $G$ and configuration $C$ on $G$ with $C(r)>0$, let $\cP=\{(H_1,D_1),\ldots,(H_k,D_k)\}$ for some $k\ge 1$, where each $H_i$ is a subgraph of $G$ and each $D_i$ is a sub-configuration of $C$.
The family $\cP$ is called an $r$-{\it stacking partition} of $(G,C)$ (see Figure \ref{fig:StackPart}) if
\begin{enumerate}
    \item 
    \label{p:union}
    $V(H_1)\cup\cdots\cup V(H_k)=V(G)-\{r\}$,
    \item 
    \label{p:partition}
    $\{D_1,\cdots,D_k\}$ partitions $C-r$,
    \item
    \label{p:feasible}
    each $H_i$ is $(D_i,r)$-feasible (stacking onto $v_i$), and
    \item
    \label{p:distances}
    each $H_i$ preserves the distances of $G$ along the $v_i$-stacking moves in part (\ref{p:feasible}).
\end{enumerate}

\begin{figure}
\begin{center}
\begin{tikzpicture}[scale=.7]
\tikzstyle{every node}=[draw,circle,fill=black,minimum size=1pt,inner sep=2pt]
\draw node (r) [label=above: {$r$}] at (7,10) {};
\draw node (rr) [label=below: {$1$}] at (7,10) {};
\draw node (a) [label=below: {$1$}] at (7,4) {};
\draw node (b) [label=left: {$1$}] at (5,5) {};
\draw node (c) [label=left: {$1$}] at (6,7) {};
\draw node (d) [label=left: {$1$}] at (5,9) {};
\draw[line width=1.5pt,color=gren] (a) -- (b);
\draw[line width=1.5pt,color=gren] (b) -- (c);
\draw[line width=1.5pt,color=gren] (c) -- (d);
\draw node (e) [label=above: {$1$}] at (8,7) {};
\draw node (f) [label=right: {$1$}] at (9,5) {};
\draw node (g) [label=left: {$1$}] at (7,5.5) {};
\draw[line width=1.5pt,color=red] (e) -- (f);
\draw[line width=1.5pt,color=red] (f) -- (g);
\draw[line width=0.5pt] (g) -- (e);
\draw node (h) [label=above: {$1$}] at (11,9) {};
\draw node (i) [label=above: {$1$}] at (9,9) {};
\draw node (j) [label=above: {$1$}] at (8,7) {};
\draw node (k) [label=above: {$1$}] at (10,7) {};
\draw[line width=1.5pt,color=blue] (h) -- (i);
\draw[line width=1.5pt,color=blue] (i) -- (j);
\draw[line width=1.5pt,color=blue] (j) -- (k);
\draw[line width=0.5pt] (d) -- (r);
\draw[line width=0.5pt] (f) -- (a);
\draw[line width=0.5pt] (e) -- (c);
\draw[line width=0.5pt] (r) -- (i);
\end{tikzpicture}
$\qquad\qquad$
\begin{tikzpicture}[scale=.7]
\tikzstyle{every node}=[draw,circle,fill=black,minimum size=1pt,inner sep=2pt]
\draw node (a) [label=below: {$1$}] at (6,4) {};
\draw node (aa) [label=right: {$v_1$}] at (6,4) {};
\draw node (b) [label=left: {$1$}] at (4,5) {};
\draw node (c) [label=left: {$1$}] at (5,7) {};
\draw node (d) [label=left: {$1$}] at (4,9) {};
\draw node (e) [label=above: {$1$}] at (8,7) {};
\draw node (f) [label=right: {$1$}] at (9,5) {};
\draw node (ff) [label=below: {$v_2$}] at (9,5) {};
\draw node (g) [label=left: {$1$}] at (7,5.5) {};
\draw node (h) [label=above: {$1$}] at (13,9) {};
\draw node (i) [label=above: {$1$}] at (11,9) {};
\draw node (j) [label=above: {$0$}] at (10,7) {};
\draw node (k) [label=above: {$1$}] at (12,7) {};
\draw node (kk) [label=right: {$v_3$}] at (12,7) {};
\draw[line width=1.5pt,color=gren] (a) -- (b) -- (c) -- (d);
\draw[line width=1.5pt,color=red] (e) -- (f) -- (g);
\draw[line width=1.5pt,color=blue] (h) -- (i) -- (j) -- (k);
\end{tikzpicture}
\caption{A graph and configuration $(G,C)$, left, with an $r$-stacking partition $\{{\color{gren} (H_1,D_1)},{\color{red} (H_2,D_2)},{\color{blue} (H_3,D_3)}\}$, right.}
\label{fig:StackPart}
\end{center}
\end{figure}
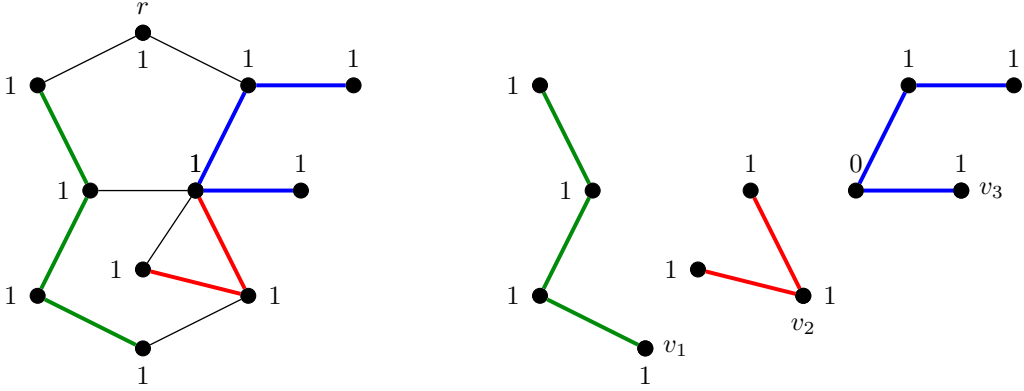

Our first result is the following characterization lemma, which we will prove in Section \ref{ss:StackingLemma}.

\begin{lem}[Stacking Partition Lemma]
\label{l:Partition}
Let $G$ be a graph, $C$ be any configuration of cups on $G$, and $r$ be any target vertex.
Then $G$ is $(C,r)$-stackable if and only if $(G,C)$ has an $r$-stacking partition.
\end{lem}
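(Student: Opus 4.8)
The plan is to prove both implications of the biconditional, using throughout one structural observation about cup stacking: a move $u\mapsto v$ requires \emph{both} $u$ and $v$ to carry a cup, so cups are never deposited onto an empty vertex. Consequently, within any stacking process that begins from a single configuration $D$, every vertex that ever holds a cup lies in $\supp(D)$; moreover cups merge but never split, so the cups originating on a fixed vertex always travel together as a single block and share a common fate. I expect this observation to do most of the work, since it forces the ``streams'' of cups belonging to disjoint sub-configurations to occupy disjoint vertex sets and hence never to interfere.

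For the forward direction I would start from a successful sequence of moves stacking $C$ onto $r$ and isolate the moves of the form $v\mapsto r$ that deposit cups directly onto $r$, listed chronologically as $v_1\mapsto r,\ldots,v_k\mapsto r$. Because cups from a common origin travel together and merging forces a common fate, each cup of $C-r$ reaches $r$ through exactly one such move; grouping the cups by the move that carries them in defines sub-configurations $D_1,\ldots,D_k$ with $D_i(u)\in\{0,C(u)\}$, which are therefore pairwise disjoint and partition $C-r$ (part (\ref{p:partition})). For each $i$, the subsequence of moves transporting $D_i$ never touches a vertex holding a $D_j$-cup for $j\ne i$ (else the two blocks would share a fate), so by the no-empty-vertex rule it is a self-contained stacking of $D_i$ onto $v_i$ through vertices of $\supp(D_i)$. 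Letting $H_i$ be the union of the $G$-geodesics used by these moves makes $H_i$ $(D_i,v_i)$-stackable, and since $H_i\sse G$ contains each geodesic in full we get $\dist_{H_i}(u,w)=\dist_G(u,w)$ along every such move (parts (\ref{p:feasible}) and (\ref{p:distances})); legality of $v_i\mapsto r$ forces $\dist_G(v_i,r)=|D_i|$, the required feasibility distance. The only gap is part (\ref{p:union}): cup-free vertices used by no geodesic may appear in no $H_i$, so I would absorb them into an extra pair $(H_0,\emt)$, which is vacuously feasible, to cover all of $V(G)-\{r\}$.

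For the reverse direction I would run the partition one block at a time, processing $i=1,\ldots,k$ by first executing the moves that stack $D_i$ onto $v_i$ inside $H_i$ and then performing $v_i\mapsto r$. Here the main obstacle is interference: the $H_i$ may share vertices, so a priori a cup of $D_j$ sitting on a vertex of $H_i$ could corrupt the cup counts that make the $H_i$-moves legal. This is exactly where the structural observation is decisive. Since $C(r)>0$ and the $D_i$ partition $C-r$, each vertex $u\ne r$ begins with all of its cups in a single block, and because moves never land on empty vertices, the stacking of $D_i$ inside $H_i$ only ever touches vertices of $\supp(D_i)$, which is disjoint from $\supp(D_j)$ for $j\ne i$. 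Hence no block interferes with another, and part (\ref{p:distances}) guarantees that each move legal in $H_i$ has $\dist_G(u,w)$ equal to the cup count on $u$, so it is legal in $G$; the final $v_i\mapsto r$ is legal because $v_i$ accumulates $|D_i|=\dist_G(v_i,r)$ cups and $r$ always retains its initial cup. After all blocks are processed every cup sits on $r$, establishing $(C,r)$-stackability.
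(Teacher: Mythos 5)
Your proposal is correct and follows essentially the same route as the paper: in one direction you group the cups by the final move that deposits them on $r$, take the unions of the geodesics used as the subgraphs $H_i$, and absorb leftover cup-free vertices via empty configurations; in the other you execute each block's stacking inside its $H_i$ and then move $|D_i|$ cups to $r$. The only difference is one of detail, not of substance: you make explicit the non-interference argument (disjoint supports plus the no-landing-on-empty-vertices rule) that the paper's proof leaves implicit, and you package the uncovered vertices into a single pair $(H_0,\emt)$ where the paper uses singleton pairs.
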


As a simple corollary we obtain the following fact, which we will prove in Section \ref{ss:PathsCyclesSpiders}.
First, a vertex of degree at least three in a graph is called a {\it split} vertex.
A tree with exactly one split vertex is called a {\it spider}.
We denote $S(\ell_1,\ldots,\ell_k)$ to be the spider with $k\ge 3$ paths emanating from the split vertex (also called a {\it root}), having lengths $\ell_1,\ldots,\ell_k$, for a total of $\ell_1+\cdots+\ell_k+1$ vertices.
As a slight abuse, we also consider a path to be a spider in the case that the root (now not a split vertex --- $k=2$) is not one of its endpoints; that is, the path $v_1\cdots v_n$ can be written as $S(i-1,n-i)$ with root $v_i$ when $1<i<n$.

\begin{fct}
\label{f:PathsCyclesSpiders}
Paths and cycles are stackable, and every spider with split vertex $r$ is $r$-stackable.
\end{fct}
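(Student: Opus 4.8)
The plan is to reduce the whole statement to a single building block and then assemble the three cases through the Stacking Partition Lemma (Lemma~\ref{l:Partition}). The block I would establish first is: a path carrying the configuration $\b1$ can be stacked onto either of its endpoints. I would prove this by induction on $m$ for the path $u_0u_1\cdots u_m$, showing it is $u_0$-stackable. In the inductive step I peel off $u_0$: apply the induction hypothesis to the subpath $u_1\cdots u_m$ to stack all of its $m$ cups onto the far endpoint $u_m$ (these moves live entirely inside the subpath and never touch $u_0$), and then finish with the single move $u_m\mapsto u_0$, whose distance $m$ equals the $m$ cups now sitting on $u_m$. By symmetry the same argument stacks onto $u_m$. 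The only point to verify is that distances inside the subpath agree with those in the full path, which is immediate since a subpath is isometric.

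For a spider with root $r$ and legs $L_1,\ldots,L_k$ of lengths $\ell_1,\ldots,\ell_k$, I would take the $r$-stacking partition whose parts are the legs themselves: let $H_i$ be the path $L_i$ with $r$ deleted and $D_i$ be $\b1$ restricted to $H_i$. The non-root vertices of the legs partition $V(G)-\{r\}$ and their cups partition $\b1-r$, giving conditions~(\ref{p:union}) and~(\ref{p:partition}). For feasibility~(\ref{p:feasible}), stack leg $i$ onto its leaf using the path-endpoint claim; that leaf sits at distance $\ell_i=|D_i|$ from $r$, so the final hop $\text{leaf}\mapsto r$ is legal. Condition~(\ref{p:distances}) holds because each leg is an isometric subpath, so every move uses a genuine $G$-distance. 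Lemma~\ref{l:Partition} then delivers $r$-stackability. Since the abuse of notation lets any interior vertex $v_i$ of $P_n$ serve as the root of the two-legged spider $S(i-1,n-i)$, this argument simultaneously proves $P_n$ is $v_i$-stackable for $1<i<n$; combined with the endpoint case from the first paragraph, paths are stackable.

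For cycles I would use vertex-transitivity: by Fact~\ref{f:VTrans} it suffices to stack $C_n$ onto a single vertex, say $0$. Cutting at $0$ into two arcs $A=\{1,\ldots,a\}$ and $B=\{a+1,\ldots,n-1\}$, I would use a two-part stacking partition that stacks arc $A$ (a path) onto its outer endpoint $a$, at distance $a=|A|$ from $0$, and stacks arc $B$ onto its outer endpoint $a+1$, at distance $n-a-1=|B|$ from $0$, finishing with $a\mapsto 0$ and $(a+1)\mapsto 0$. The thing to check is that every distance invoked---both the internal path-stacking distances and the two final hops---coincides with the short-arc distance on the cycle; choosing $a=\lfloor n/2\rfloor$ keeps each relevant distance at most $\lfloor n/2\rfloor=\diam(C_n)$, so the arcs behave isometrically as paths and Lemma~\ref{l:Partition} applies.

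The substantive step is the path-endpoint claim: once it is in hand, the spider and cycle cases amount to choosing the right partition and confirming that each chosen local target lies at distance exactly equal to its cup count. The only place I expect to need real care is the distance bookkeeping for cycles, where one must verify that splitting near the antipode keeps every move within the diameter so that the two arcs genuinely act as isometric paths.
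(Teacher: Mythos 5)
Your proposal is correct and takes essentially the same route as the paper: split at the target into isometric subpaths, stack each onto its far endpoint, and apply Lemma~\ref{l:Partition}, with your endpoint claim playing the role of the paper's path induction, the legs-of-the-spider partition identical to the paper's, and the cycle handled by symmetry plus the same two-arc partition. A minor point in your favor: your explicit choice $a=\lfloor n/2\rfloor$ together with the arc-length check makes both final hops geodesic for either parity of $n$, a piece of bookkeeping the paper's cycle argument (which places $r$ at $v_{\lfloor n/2\rfloor}$ and asserts the partition works) leaves implicit.
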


The Fact \ref{f:PathsCyclesSpiders} case of paths was first proved in \cite{Hamil}, using a somewhat complicated induction proof.
The Fact \ref{f:PathsCyclesSpiders} case of spiders was first proved in \cite{Vesel}; in fact Veselovac proved more.
It is easy to see that the spider $S(1,1,\ldots)$ is $r$-stackable if and only if $r$ is the split vertex.

\begin{thm}
\cite{Vesel}
For all $k\ge 1$ and all $\ell_i>1$, $S(\ell_1,\ldots,\ell_k)$ is stackable.
\end{thm}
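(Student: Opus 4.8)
The plan is to derive the statement from the Stacking Partition Lemma (Lemma~\ref{l:Partition}) together with the split-vertex case of Fact~\ref{f:PathsCyclesSpiders}. Fix the root $c$ and, for each leg $i$, label its vertices $c=u_{i,0},u_{i,1},\ldots,u_{i,\ell_i}$. By Fact~\ref{f:PathsCyclesSpiders} we may assume the target is not the root, so $r=u_{1,d}$ with $1\le d\le\ell_1$, and the relevant distances are $\dist(u_{1,j},r)=|d-j|$, $\dist(c,r)=d$, and $\dist(u_{i,j},r)=d+j$ for $i\ge2$. Because a spider is a tree, every connected piece is an isometric subtree, so condition~(\ref{p:distances}) of an $r$-stacking partition holds automatically; the task is therefore purely combinatorial: to cover $V\setminus\{r\}$ by feasible pieces whose cup-sets partition $\b1-r$ (conditions~(\ref{p:union})--(\ref{p:feasible})).

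First I would peel off the tail beyond $r$. The subpath $u_{1,d+1}\cdots u_{1,\ell_1}$ is a path on $\ell_1-d$ vertices, hence (Fact~\ref{f:PathsCyclesSpiders}) stackable onto its far end $u_{1,\ell_1}$, which sits at distance $\ell_1-d$ from $r$---exactly its cup count. Declaring this one feasible piece reduces everything to the case $d=\ell_1$, i.e.\ $r$ is the outer endpoint of its leg. What remains to be partitioned is the subspider on $c$, the partial leg $u_{1,1},\ldots,u_{1,\ell_1-1}$, and the legs $2,\ldots,k$.

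The easy pieces are the \emph{long} legs, those with $\ell_i>\ell_1$: such a leg, being a path, stacks onto $u_{i,\ell_i-\ell_1}$, which lies at distance $\ell_i$ from $r$, matching its cups. The delicate part---and what I expect to be the main obstacle---is the \emph{short} legs, those with $\ell_i\le\ell_1$: every vertex of such a leg is at distance at least $\ell_1+1>\ell_i$ from $r$, so a short leg can never be stacked toward $r$ on its own. Its cups must acquire ``reach'' by being routed through $c$ into the partial leg~$1$, the only vertices lying strictly closer to $r$ than the legs. Concretely one wants to combine a short leg with $c$ and a suitable prefix $u_{1,1},\ldots,u_{1,m}$ of the partial leg, obtaining a path-piece that stacks onto an interior vertex $u_{i,j}$ at distance equal to its cup count; a short computation shows that $m=\ell_1-\ell_i$ (together with $c$) makes $u_{i,1}$, at distance $\ell_1+1$, such a landing vertex.

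The crux is that these helper-prefixes all emanate from the single vertex $c$, so the scarce ``reach'' of $\{c,u_{1,1},\ldots,u_{1,\ell_1-1}\}$ must be apportioned among all the short legs while keeping the cup-sets disjoint (condition~(\ref{p:partition})). This is precisely the phenomenon that makes $S(1,1,\ldots)$ fail at every leaf, and it is where the hypothesis $\ell_i>1$ must be spent: the interior room guaranteed by $\ell_i\ge2$ is what lets each leg absorb routed cups and lets $c$ act as a shared pass-through (a vertex appearing in several pieces but carrying a cup in only one). I would organize this as a careful pairing/allocation scheme---pairing short legs with segments of the partial leg through $c$, treating the leftover $\{c\}\cup\{u_{1,1},\ldots\}$ as its own piece landing on $c$ when it is not needed as a helper, and dispatching the smallest cases by hand---after which conditions~(\ref{p:union})--(\ref{p:feasible}) are routine to verify and Lemma~\ref{l:Partition} finishes the proof.
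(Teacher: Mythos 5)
First, a point of reference: the paper does not prove this theorem at all --- it is quoted from Veselovac's thesis \cite{Vesel} --- so there is no in-paper proof to compare against, and your proposal must stand on its own. It does not: it is a plan rather than a proof, and its one concrete mechanism is wrong. Your peeling step (stacking the tail beyond $r$ onto $u_{1,\ell_1}$ and reducing to $r$ an endpoint at distance $d$ from $c$) and your treatment of long legs ($\ell_i>d$, stacking onto $u_{i,\ell_i-d}$) are both fine. The problem is the short legs, which you correctly identify as the crux but then defer to an unspecified ``careful pairing/allocation scheme.'' That allocation is precisely the content of the theorem, and your proposed template --- each short leg joined with $c$ and a prefix of length $d-\ell_i$, landing $d+1$ cups at $u_{i,1}$, with $c$ acting as a cupless pass-through for subsequent legs --- provably fails on the smallest interesting case. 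Take $S(2,2,2)$ with $r$ a leaf, say legs $a_1,a_2$ (with $r=a_2$), $b_1,b_2$, $e_1,e_2$, so $d=2$ and both remaining legs are short. Your scheme spends $c$'s cup on leg $b$ via the piece $\{c,b_1,b_2\}$ landing at $b_1$, and then the only cups left for leg $e$ are $\{a_1,e_1,e_2\}$ on the path $a_1,c,e_1,e_2$ with $c$ empty. This configuration cannot place $3$ cups on $e_1$ (the unique in-piece vertex at distance $3$ from $r$): a move $u\peb v$ requires $v$ to already hold a cup, so the cup on $a_1$ cannot cross the empty vertex $c$, and the only merges available produce $3$ cups on $a_1$ --- which sits at distance $1$, not $3$. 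No reassignment within your template repairs this.

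What actually works for $S(2,2,2)$ has a shape your scheme never contemplates: take the piece $\{b_1,c\}$ delivering $2$ cups from $c$, and the piece with cups on $\{e_2,e_1,a_1,b_2\}$ delivering $4$ cups from $b_2$ via $e_2\peb e_1$, then $e_1\peb a_1$ (distance $2$), then $a_1\peb b_2$ (distance $3$), then $b_2\peb r$ (distance $4$). That is, valid $r$-stacking partitions here require pieces that span \emph{two} short legs and land at depth-two vertices $u_{i,2}$ (deliveries of size $d+2$), or chain cups across legs --- not per-leg pieces landing at $u_{i,1}$. (A similar pairing, e.g.\ $\{e_1,f_1,f_2,e_2\}$ stacking onto $e_2$ via $e_1\peb f_1$, $f_2\peb f_1$, $f_1\peb e_2$, is what handles $S(2,2,2,2)$.) So the missing allocation scheme cannot be a routine bookkeeping layer on top of your single piece type; it needs genuinely different piece constructions together with a counting/parity analysis over the short legs, and until that is supplied the argument has a gap exactly where the theorem's difficulty lives.
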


Left open are the cases for which $k\ge 3$, some $\ell_i=1$, and some $\ell_j>1$.
In \cite{Vesel} is found a complete solution for the subcase of having a unique $j$ with $\ell_j>1$; the statement of the theorem that describes for which vertices $r$ the particular graph is $r$-stackable is not describable succinctly, so we refer the reader to \cite{Vesel}.

We also use Lemma \ref{l:Partition} to prove the following two theorems, which will be proven in Sections \ref{ss:Ecc2} and \ref{ss:Cubes}.

\begin{thm}
\label{t:Diam2}
There is a polynomial algorithm to determine if a diameter two graph is $r$-stackable.
\end{thm}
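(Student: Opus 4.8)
The plan is to use the Stacking Partition Lemma (Lemma~\ref{l:Partition}) to reduce $r$-stackability in a diameter two graph to a single matching question on $G-r$, and then to answer that question with the Blossom algorithm. Throughout, write $N=N(r)$ for the neighbors of $r$ and $F=\{v:\dist(v,r)=2\}$ for the remaining non-target vertices, so that $V(G)-\{r\}=N\cup F$. The key restriction I would record first is a size bound on the blocks of any $r$-stacking partition $\{(H_i,D_i)\}$. Each $H_i$ stacks onto a vertex $v_i$ with $\dist_G(v_i,r)=|D_i|$, and since $\diam(G)=2$ this forces $|D_i|\in\{1,2\}$; as $C=\b1$, each block is either a single cup on a vertex (a size-$1$ block) or a pair of single cups on two vertices (a size-$2$ block). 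A size-$1$ block is automatically feasible and requires $v_i\in N$. For a size-$2$ block the only cups available for the first move are its two occupied vertices, so those two vertices must be adjacent, and the target $v_i$ must be one of them with $\dist_G(v_i,r)=2$, i.e.\ $v_i\in F$; thus a size-$2$ block is precisely an edge of $G-r$ at least one of whose endpoints lies in $F$. Condition~(\ref{p:distances}) is automatic here, since the internal move is along a single edge and the final move $v_i\mapsto r$ runs along a $\dist_G$-geodesic of length $|D_i|$.

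Next I would translate this block structure into a matching statement and prove the equivalence
\[
G \text{ is } r\text{-stackable} \iff G-r \text{ has a matching saturating } F .
\]
For the forward direction, the size-$2$ blocks of a partition form a matching $M$ of $G-r$, and conditions~(\ref{p:union}) and~(\ref{p:partition}) say exactly that every vertex of $G-r$ left uncovered by $M$ must be a size-$1$ block, hence lie in $N$; since every vertex of $F$ must then be covered, $M$ saturates $F$. For the converse, given any matching $M$ of $G-r$ saturating $F$, I would first discard each edge of $M$ lying inside $N$ (this changes nothing, as such an edge saturates no vertex of $F$), so that every surviving edge meets $F$; taking these edges as size-$2$ blocks and all uncovered vertices (all in $N$) as size-$1$ blocks yields a family that one checks is an $r$-stacking partition, whence $G$ is $r$-stackable by Lemma~\ref{l:Partition}.

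It then remains to decide, in polynomial time, whether $G-r$ has a matching saturating the prescribed set $F$, and this is the step I expect to require the most care. The subtlety is that a maximum matching need not saturate $F$ even when some matching does, so one cannot simply compute a maximum matching and inspect it. I would instead reduce to an ordinary perfect-matching computation: attach to $G-r$ a clique gadget joined completely to the ``optional'' set $N$ (with the clique's size chosen of the correct parity to absorb exactly the uncovered $N$-vertices), and test the enlarged graph for a perfect matching via Edmonds' Blossom algorithm, the Gallai--Edmonds Structure Theorem certifying that such a perfect matching exists precisely when $G-r$ admits a matching saturating $F$. All of these steps run in polynomial time, and since $\diam(G)=2$ the sets $N$ and $F$ are computed in polynomial time as well; running the procedure over all targets $r$ likewise decides stackability, giving the theorem.
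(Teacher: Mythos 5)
Your proposal is correct, and its first half is exactly the paper's route: your block analysis (size-$1$ blocks are single cups on vertices of $N$, size-$2$ blocks are edges with their receiving endpoint in $N_2(r)$) reproves Lemma~\ref{l:Char}, that $G$ is $r$-stackable if and only if there is a matching saturating $N_2(r)$ (your $F$; working in $G-r$ is equivalent, since no edge at $r$ can saturate a vertex of $N_2(r)$). Where you genuinely diverge is the algorithmic step. The paper (Theorem~\ref{t:Ecc2Algo}) attacks the constrained matching problem structurally: it computes the Gallai--Edmonds partition $(I,A,Z)$ of $G-r$, contracts the factor-critical components of $I$, and runs the Hungarian algorithm on a weighted auxiliary bipartite graph (weights $|I^*|+1$ on edges meeting $I^*_2$) to find an $A$-saturating matching maximizing coverage of $I^*_2$, assembling from it an explicit candidate matching $M_G$ that saturates $N_2(r)$ whenever any matching does. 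You instead use the standard gadget reduction: attach a clique $W$, completely joined to the optional set $N$, with $|W|\ge|N|$ and $|W|\equiv|N|+|F|\pmod 2$, and test the enlarged graph for a perfect matching with the Blossom algorithm --- vertices of $F$ have no edges into $W$, so a perfect matching restricts to an $F$-saturating matching of $G-r$, and conversely the parity choice lets the clique absorb the uncovered $N$-vertices. Your reduction is simpler and more self-contained (one perfect-matching call versus the paper's three-stage Gallai--Edmonds/Blossom/Hungarian pipeline), and it still yields the witness matching, hence the stacking, by restriction; the paper's machinery buys a finer structural picture of which matchings can exist, but for the decision problem your route suffices. One small presentational slip: your appeal to the Gallai--Edmonds Structure Theorem as ``certifying'' the gadget equivalence is unnecessary --- the equivalence follows from the direct two-line argument just sketched, and you should either supply that argument or drop the citation, since Gallai--Edmonds plays no real role in your version.
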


The proof of Theorem \ref{t:Diam2} uses the Gallai-Edmonds Structure Theorem \ref{t:GalEd} (Section \ref{ss:Ecc2}) from matching theory.

\begin{thm}
\label{t:Grids}
The grid $P_m\ssq P_k$ is stackable for all $m$ and $k$.
\end{thm}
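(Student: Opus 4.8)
The plan is to use the Stacking Partition Lemma (Lemma \ref{l:Partition}) together with a recursive decomposition of the grid. Fix a target $r=(r_1,r_2)$ in $G=P_m\ssq P_k$. By Lemma \ref{l:Partition}, it suffices to exhibit an $r$-stacking partition $\cP$ of $(G,\b1)$, that is, to cover $V(G)-\{r\}$ by edge-disjoint vertex sets $V(H_i)$, partition the configuration $\b1-r$ into disjoint sub-configurations $D_i$, and ensure that each piece $H_i$ is $(D_i,r)$-feasible while preserving the relevant grid distances. The central idea will be that a grid is naturally a disjoint union of horizontal paths (the rows $R_j = \{(i,j)\mid 1\le i\le m\}$ for each fixed $j$) or vertical paths (the columns), and by Fact \ref{f:PathsCyclesSpiders} each such path is internally stackable onto any of its own vertices.

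First I would handle each row (or column) individually. For the row $R_{r_2}$ containing $r$, Fact \ref{f:PathsCyclesSpiders} (the spider/path case) gives that this path is $r$-stackable within itself, producing one cup stacked at $r$; this consumes the target's row entirely. For every other row $R_j$ with $j\ne r_2$, I would first stack all $m$ cups of that row onto a single well-chosen vertex $s_j$ of the row — again invoking the path-stackability of Fact \ref{f:PathsCyclesSpiders} applied to $R_j$ as a path — so that after this internal phase the row $R_j$ contributes a single configuration $D_j$ with $|D_j|=m$ cups concentrated at $s_j$. The key point is that $s_j$ must be selectable so that the vertical distance condition can eventually be met, i.e. so that the accumulated pile can be transported toward $r$. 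The subtlety is that after we stack a whole row into one pile, the pile's size ($m$) must equal the grid-distance from $s_j$ to the vertex it moves to; so these piles cannot simply hop to $r$ directly unless the geometry cooperates, and one must route them carefully, possibly merging piles column-by-column.

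The cleanest route is probably an induction on the number of rows (equivalently on $k$): peel off the row farthest from $r$, stack it into a single pile, move that pile onto an adjacent vertex of the next row so it merges with that row's cup, and thereby reduce $P_m\ssq P_k$ to a stacking problem on $P_m\ssq P_{k-1}$ with a modified (non-$\b1$) configuration. This is precisely why the paper emphasizes that one must handle the general configuration $C$, not just $\b1$: the inductive hypothesis has to be stated for arbitrary configurations, or at least for the specific ``one fat pile plus all-ones'' configurations that arise. I expect the main obstacle to be exactly the distance-matching bookkeeping in step \ref{p:distances} of the partition: when a consolidated pile of size $p$ sits at some vertex, a legal move requires a vertex at grid-distance exactly $p$ that already holds a cup, and arranging the geometry so that such a vertex always exists (and lies in the right subgraph $H_i$) is where the real work lies. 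Small base cases ($m=1$, giving a path, already covered by Fact \ref{f:PathsCyclesSpiders}; and small $k$) and a careful parity/position analysis of where $r$ sits relative to the grid boundary will be needed to make the merging moves always available.
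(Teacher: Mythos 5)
You correctly identify the framework (Lemma \ref{l:Partition} plus path-stackability from Fact \ref{f:PathsCyclesSpiders}) and even the core difficulty (a pile of size $p$ can only move to a vertex at grid-distance exactly $p$), but your proposed resolution does not work. First, as stated, the merging step is illegal: after you stack a row into a pile of $m$ cups at $s_j$, that pile cannot ``move onto an adjacent vertex of the next row,'' since a legal move from $s_j$ must travel distance exactly $m$, not $1$. Even under the charitable reading that the pile travels diagonally to a vertex of the next row at distance exactly $m$, the induction collapses at the second step: the merged configuration on $P_m\ssq P_{k-1}$ has a pile of size $m+1$, and further row-consolidations produce piles of size roughly $2m, 3m,\ldots$, while every vertex of the grid has eccentricity at most $m+k-2$. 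A pile whose size exceeds the eccentricity of its vertex is permanently frozen, so the ``one fat pile plus all-ones'' configurations you would need in your inductive hypothesis are in general \emph{not} stackable, and the induction cannot be carried.

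The missing idea --- and the paper's actual proof --- is that no merging is needed at all: arrange matters so that every pile makes exactly one move, directly to $r$. Reduce first to the case where $r$ is a corner, say $r=(0,0)$ (for interior $r$, split the grid into the two or four subgrids having $r$ as a corner and handle each separately, which also disposes of your worry about where $r$ sits relative to the boundary). Then partition $G-r$ into the paths $R_1,\ldots,R_k$ parallel to the long side together with two boundary paths emanating from $r$, and stack each $R_i$ (legal by Fact \ref{f:PathsCyclesSpiders}) onto the \emph{anti-diagonal} vertex $r_i=(m-i,i)$, which satisfies $\dist(r_i,r)=(m-i)+i=m=|R_i|$; each pile of $m$ cups then moves to $r$ in a single legal move, so $\cP=\{(R_i,\b1)\}$ together with the boundary paths is an $r$-stacking partition and Lemma \ref{l:Partition} applies. (A minor adjustment of one path's length is needed when the subgrid is square, since then one diagonal endpoint would collide with a boundary path.) This choice of stacking vertices is precisely the ``geometry cooperating'' that your sketch asks for but never supplies, and it eliminates the need for arbitrary-configuration induction entirely.
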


Furthermore, Lemma \ref{l:Partition} also yields stackability results for complete $t$-partite graphs and for Johnson graphs (which generalize Kneser graphs, which includes the Petersen graph), which we state in Section \ref{ss:Ecc2} as well.

\begin{thm}
\label{t:Cubes}
The $d$-dimensional cube $Q^d$ is stackable for all $d\le 20$.
\end{thm}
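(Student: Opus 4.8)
The plan is to fix the target and then build an explicit $\bz$-stacking partition guided by a symmetric chain decomposition. Since $Q^d$ is vertex transitive, Fact~\ref{f:VTrans} lets us fix the root to be the all-zeros vertex $\bz$, and by the Stacking Partition Lemma (Lemma~\ref{l:Partition}) it then suffices to partition $V(Q^d)\setminus\{\bz\}$ into feasible bundles. Here a bundle is a subgraph $H$ carrying one cup per vertex that can be internally stacked onto some $s\in V(H)$ with $\dist(s,\bz)=|V(H)|$; the $|V(H)|$ cups collected at each such $s$ are then shipped to $\bz$ in a single move of length $|V(H)|$, which is legal because $\bz$ always retains at least one cup.

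First I would invoke Kleitman's Symmetric Chain Decomposition to partition the subsets of $\{1,\dots,d\}$ into symmetric chains. A chain running from level $k$ to level $d-k$ is a path $A_k\subset A_{k+1}\subset\cdots\subset A_{d-k}$ in $Q^d$, and since $A_i\triangle A_j=A_j\setminus A_i$ we have $\dist_{Q^d}(A_i,A_j)=j-i$; thus every chain is an isometric path. Consequently Fact~\ref{f:PathsCyclesSpiders} applies verbatim: each chain can be stacked onto any one of its own vertices using geodesics that are simultaneously geodesics of $Q^d$, so condition~(\ref{p:distances}) of an $r$-stacking partition is automatic for chain bundles.

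A chain of length $L=d-2k+1$ contains a vertex at level $L$ precisely when $k\le L$, i.e.\ when $k\le (d+1)/3$. For every such chain I would take its level-$L$ vertex as the shipping vertex $s$: stacking the whole chain onto $s$ produces a pile of $L=\dist(s,\bz)$ cups, which then ships to $\bz$. This disposes of all the long chains, including the chain through $\bz$ once $\bz$ is deleted (it then runs from level $1$ to level $d$ and ships from $[d]$). The difficulty lies entirely with the short chains crowded near the middle levels, where $L<k$: such a chain has no vertex low enough to ship from on its own, so its cups must be combined with others and routed onto a vertex whose level equals the size of the combined bundle.

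The main obstacle is to merge these middle chains into feasible bundles, and this is where I would exploit the Hamiltonicity of the generalized Johnson graphs (Merino--M\"utze--Namrata). Two $\ell$-sets at $Q^d$-distance $2$ share $\ell-1$ elements, so each level $\ell$ carries a Hamilton cycle along which a pile of two cups can hop between consecutive vertices; these cycles provide the connectivity needed to coalesce middle-level cups and relocate the resulting piles onto shipping vertices of the correct level. Arranging these bundles so that, together with the chain bundles, they exactly tile $V(Q^d)\setminus\{\bz\}$ is the delicate part, and I expect it to reduce to a finite combinatorial matching/feasibility check that can be certified only up to $d\le 20$---which is precisely the origin of the dimension restriction and the reason the unrestricted statement remains a conjecture.
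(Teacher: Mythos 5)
Your reduction to the root $\bz$ and your treatment of the long chains are sound: symmetric chains are indeed isometric paths in $Q^d$, so a chain running from level $k$ to level $d-k$, carrying $L=d-2k+1$ cups, can be stacked onto its level-$L$ vertex and shipped, provided $k\le L$. But the proof stops exactly where the theorem's difficulty begins. For $k>(d+1)/3$ (already at $d=6$, e.g.\ the single-vertex chains at level $3$) no chain vertex has level equal to the chain's cup count, and these middle chains contain the overwhelming majority of the $2^d$ vertices. For them you never construct the bundles: you only say you ``expect'' the merging to reduce to a finite matching/feasibility check, which you neither formulate nor perform. Since every bundle must have size at most $d$ (its shipping vertex needs level equal to the bundle size) while the middle levels hold roughly $\binom{d}{d/2}$ vertices each, this tiling is the entire content of the theorem, and deferring it means the statement is not proved for any $d\ge 6$. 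Your guess that the $d\le 20$ restriction comes from such a certified check is also not what happens in the paper.

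The mechanism you sketch for the merge is moreover incompatible with the move rules: a pile of two cups cannot ``hop'' along a Hamilton cycle of a level, because a move requires the destination to be occupied, and upon landing the pile becomes three or more cups, so its next legal move has length $\ge 3$, not $2$; hopping across already-vacated vertices is illegal outright. The paper's actual route is structurally different and worth comparing: it partitions $Q^d$ into $3$- and $4$-dimensional subcubes (via $Q^d=Q^{d-3}\ssq Q^3$ and $Q^d=Q^{d-4}\ssq Q^4$) rather than into chains; it uses Kleitman's decomposition only to extract the injection $\f$ of Corollary~\ref{c:Inject}, which pairs each high-level $3$-cube $A$ with $B=\f(A)$ and $C=\f(B)$ one and two levels lower, and then gives explicit, figure-verified stacking partitions of these triples for levels $9$--$12$ (Lemma~\ref{l:ABC3Cubes}); Johnson-graph Hamiltonicity enters only once, in $J(d-3,4,3)$, to pair up the level-$4$ $3$-cubes (Lemma~\ref{l:7Cube}) --- and there the paired cubes borrow an adjacent emptied top vertex so that all moves are legal, precisely the bookkeeping your hopping picture skips. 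The bound $d\le 20$ then falls out of Lemma~\ref{l:4CubesLevels} and Corollary~\ref{c:15Cube}, i.e.\ from how far the explicit ad hoc constructions reach, not from a search. To rescue your chain-based approach you would need an analogue of Lemma~\ref{l:ABC3Cubes} for chains: an explicit, move-legal scheme that groups middle chains into bundles of prescribed sizes with designated shipping vertices, with all intermediate piles landing on occupied vertices at the correct distances.
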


The proof of Theorem \ref{t:Cubes} uses the new result (Theorem \ref{t:JohnsonHam}) of Merino, M\"{u}tze, and Namrata that generalized Johnson graphs are Hamiltonian and the classic Symmetric Chain Decomposition Theorem for cubes (Theorem \ref{t:SCD}), both found in Section \ref{ss:Cubes}.

\section{Proofs}
\label{s:Proofs}

\subsection{Proof of the Stacking Partition Lemma}
\label{ss:StackingLemma}

\noindent
{\it Proof of Lemma \ref{l:Partition}.}
We begin by assuming that $(G,C)$ has an $r$-stacking partition $\cP=\{(H_1,D_1),\ldots,$ $(H_k,D_k)\}$.
We will show that every cup of $C$ on $v\in G-r$ can be placed on $r$.
By property \ref{p:partition} there is a unique $i$ such that $v\in D_i$.
By property \ref{p:feasible} there is some vertex $s$ of $H_i$ such that all the cups of $D_i$ can be placed on $s$ and $\dist(s,r)=|D_i|$.
Hence we can move all those cups from $s$ to $r$; i.e. $G$ is $(C,r)$-stackable.

Now we suppose that $G$ is $(C,r)$-stackable.
Consider the sequence of moves that place all the cups on $r$.
Let $s_1\ldots,s_m$ be the vertices from which cups move onto $r$ and let $X_i$ be the set of cups moved from $s_i$ to $r$.
For each $i$, let $H_i$ be the union of all geodesics $P(x,y)$ used in moving all the cups in $X_i$ from their original locations to $s_i$, and let $D_i$ count the number of cups of $X_i$ originally on each vertex of $H_i$.
Next, if $\cup_{i=1}^m H_i=G-r$ then set $k=m$; otherwise, let $(G-r)-\cup_{i=1}^m H_i = \{u_1,\ldots,u_t\}$, $k=m+t$, and $H_{m+i}=\{u_i\}$ for $1\le i\le t$, with $D_{m+i}=0$ on $H_{m+i}$.
Finally, define $\cP=\{(H_1,D_1),\ldots,(H_k,D_k)\}$ --- it is clear that $\cP$ satisfies each of the four properties of an $r$-stacking partition.
\hfill $\Box$

\subsection{Stackability of paths, cycles, and spiders}
\label{ss:PathsCyclesSpiders}
In this section we will be assuming that $C=\b1$. 
\bigskip

\noindent
{\it Proof of Fact \ref{f:PathsCyclesSpiders}.}
We prove that $P_n$ is stackable by induction.
The case $n=1$ is trivial, so we assume that $n\ge 2$ and that $P_k$ is stackable for all $1\le k<n$.
Let $r=v_i$ be any target vertex of $P_n$, with $1\le i\le n$. 
Partition $P_n$ into two subgraphs $H_1=\{v_j\in V(P_n):1\leq j\leq i\}$ and $H_2=\{v_j\in V(P_n): i\leq j\leq n\}$. 
Note that $H_1\cong P_i$ and $H_2\cong P_{n-i+1}$.
By induction $\cP=\{(H_1,\b1),(H_2,\b1)\}$ is an $r$-stacking partition of $(P_n,\b1)$, and so $P_n$ is $r$-stackable by Lemma \ref{l:Partition}.

Now consider the cycle $C_n$ and any target vertex $r$.
By symmetry, we may assume that $r=v_i$, where $i=\lfloor n/2\rfloor$.
Define $H_1$ and $H_2$ as above and notice that, for each $j\in\{1,2\}$, the distance between vertices of $H_j$ are the same in $H_j$ as they are in $C_n$.
Hence $\cP=\{(H_1,\b1),(H_2,\b1)\}$ is an $r$-stacking partition of $(C_n,\b1)$, and so $C_n$ is $r$-stackable by Lemma \ref{l:Partition}.

Finally, let $T$ be an $n$-vertex spider with root $r$.
Then $T-r$ is a disjoint union of paths $H_{n_1}, \ldots, H_{n_k}$, where $n_1+\cdots+n_k=n-1$.
Then $\{(H_1,\b1),\ldots,(H_k,\b1)\}$ is an $r$-stacking partition of $(T,\b1)$ because paths are stackable, and so $T$ is $r$-stackable by Lemma \ref{l:Partition}.
\hfill $\Box$
\bigskip

\subsection{An $r$-stacking algorithm for eccentricity 2 vertices}
\label{ss:Ecc2}

We begin this section with some definitions leading up to the Gallai-Edmonds Structure Theorem.
For a graph $G=(V,E)$ and a subset $U\sse V$ define the {\it neighborhood of $U$ in} $V-U$, denoted $N(U)$, to be the set of vertices in $V-U$ that are adjacent to some vertex of $U$.
Let $\cM=\cM(G)$ be the set of all maximum matchings of $G$.
A vertex $v$ is called {\it inessential} if it is not saturated by some $M\in\cM$, and let $I=I(G)$ be the set of all inessential vertices of $G$.
Finally set $A=A(G):=N(I)$ and $Z=Z(G):=V-I-A$; thus $\{I,A,Z\}$ partitions $V$ --- we call the triple $(I,A,Z)$ the {\it Gallai-Edmonds partition} of $G$.
For ease of notation, we will also use $I$, $A$, and $Z$ to denote the subgraphs of $G$ induced by those vertices.
Furthermore, say that a connected graph $G$ is {\it factor-critical} if the removal of any vertex yields a graph having a perfect matching, and that a matching in $G$ is {\it near-perfect} if it saturates all but one vertex of $G$.

\begin{thm}[Gallai-Edmonds Structure Theorem]
\cite{Edmonds,GallaiKrit,GallaiMax}
\label{t:GalEd}
Let $R$ be a graph and let $(I,A,Z)$ be the Gallai-Edmonds partition of $R$.
Then the following four properties hold.
\begin{enumerate}
    \item 
    \label{ti:IFactCrit}
    Each component of $I$ is factor-critical.
    \item
    \label{ti:NbrGalEd}
    Every subset $X\sse A$ has neighbors in at least $|X|+1$ components of $I$.
    \item
    \label{ti:MaxGalEd} 
    Every maximum matching in $R$ consists of
    \begin{enumerate}
        \item 
        \label{tia:Imatch}
        a near-perfect matching of each component of $I$,
        \item 
        \label{tia:Zmatch}
        a perfect matching of $Z$, and 
        \item 
        \label{tia:Amatch}
        edges from all vertices in $A$ to distinct components of $I$.
    \end{enumerate}
    \item 
    If $I$ has $k$ components, then the number of edges in any maximum matching in $R$ equals $(|V|-k+|A|)/2$.
\end{enumerate}
Moreover, the partition $(I,A,Z)$ can be found in polynomial time.
\end{thm}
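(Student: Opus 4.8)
The plan is to reduce the theorem to two classical ingredients and then read off the four properties from them. The ingredients are the \emph{Berge--Tutte deficiency formula}, which says that the number $\mathrm{def}(R)$ of vertices left unsaturated by a maximum matching of $R$ equals $\max_{S\subseteq V}\big(o(R-S)-|S|\big)$, where $o(\cdot)$ counts odd components, and \emph{Gallai's Lemma}, which asserts that a connected graph in which every vertex is inessential is factor-critical. The Berge--Tutte formula I would take as known, since it follows from Tutte's $1$-factor theorem; all of the remaining work is an analysis of inessential vertices via alternating paths.

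The hard part, and the step I would attack first, is Gallai's Lemma, since property (\ref{ti:IFactCrit}) follows from it. Given a connected $H$ with every vertex inessential, it suffices to show $\mathrm{def}(H)=1$: each vertex $v$ is then missed by a maximum matching that saturates everything else, so $H-v$ has a perfect matching for every $v$. To prove $\mathrm{def}(H)=1$ I would argue by contradiction. Among all pairs $\{u,w\}$ simultaneously missed by some single maximum matching $M_0$, pick one minimizing $\dist(u,w)$; adjacency would allow an augmentation, so $\dist(u,w)\ge 2$. An internal vertex $t$ of a shortest $u$--$w$ path is itself inessential, hence missed by some maximum matching $M_t$, and minimality of $\dist(u,w)$ forces $M_t$ to saturate both $u$ and $w$ (and $M_0$ to saturate $t$). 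A symmetric-difference / alternating-path analysis of $M_0\triangle M_t$ then yields a maximum matching missing $t$ together with one of $u,w$, contradicting the minimality of $\dist(u,w)$ since $\dist(t,u)$ and $\dist(t,w)$ are both smaller. To apply this to each component $I_j$ of $R[I]$ I also need that every vertex of $I_j$ is inessential \emph{within} $I_j$; this I would obtain from an auxiliary lemma showing that a maximum matching of $R$ missing a vertex of $I_j$ restricts to a near-perfect matching of $I_j$, so that the hypotheses of Gallai's Lemma hold for each $I_j$.

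With property (\ref{ti:IFactCrit}) established, every component of $R[I]$ is factor-critical and hence of odd order. I would then propose $S=A$ as a Tutte--Berge barrier: $R-A$ is the disjoint union $I\cup Z$, and its odd components are exactly the $k$ components of $I$, while the components of $Z$ are even because $Z$ carries a perfect matching. Verifying that $Z$ is perfectly matched, and that $A$ is an \emph{optimal} barrier, is the core computation, and it yields at once the deficiency $\mathrm{def}(R)=o(R-A)-|A|=k-|A|$ — giving the fourth property, since the maximum matching then has size $(|V|-(k-|A|))/2=(|V|-k+|A|)/2$. Property (\ref{ti:MaxGalEd}) follows from the tightness of this barrier: optimality forces every maximum matching to match $Z$ perfectly (\ref{tia:Zmatch}), to leave exactly one free vertex in all but $|A|$ of the components of $I$ and match the rest internally (\ref{tia:Imatch}), and to send the distinguished vertices of those remaining $|A|$ components out to $A$ along \emph{distinct} components (\ref{tia:Amatch}).

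Finally, for the expansion property (\ref{ti:NbrGalEd}) I would argue that if some $X\subseteq A$ had neighbors in at most $|X|$ components of $I$, then, using the factor-criticality of those components to free their matched endpoints and pushing an alternating path back to $X$, some vertex of $X$ would be exposed by a maximum matching and hence inessential — contradicting $X\subseteq A=N(I)$ with $A\cap I=\emptyset$; equivalently, such a violation would produce a barrier strictly better than $A$, and the strictness of this Hall-type condition is exactly what produces the ``$+1$''. For the moreover clause, a maximum matching is computable in polynomial time by Edmonds' blossom algorithm, and $I$ can be identified by testing, for each vertex $v$, whether an even-length $M$-alternating path reaches $v$ from an exposed vertex; this determines $I$, and then $A=N(I)$ and $Z=V-I-A$, all in polynomial time. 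I expect the genuine obstacles to be Gallai's Lemma together with the lemma relating global inessentiality to the internal structure of each $I_j$, as the barrier-optimality and expansion arguments are then fairly mechanical.
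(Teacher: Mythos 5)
The paper does not prove Theorem~\ref{t:GalEd} at all: it is quoted from the literature \cite{Edmonds,GallaiKrit,GallaiMax} and used as a black box in the proof of Theorem~\ref{t:Ecc2Algo}, so your proposal can only be compared against the standard proofs, not against anything in the paper. Your outline is essentially the classical Lov\'asz--Plummer route (Berge--Tutte deficiency formula, Gallai's Lemma, then exhibit $A$ as a tight barrier and read properties (\ref{ti:MaxGalEd}) and the deficiency count off of tightness), and your sketch of Gallai's Lemma is correct as stated: minimizing distance over pairs jointly exposed by one maximum matching, passing to an internal vertex $t$ of a shortest path, and swapping along the even path component of $M_0\triangle M_t$ containing $t$ does produce a maximum matching exposing $t$ together with one of $u,w$, giving the contradiction.

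The genuine gap sits exactly in the two steps you wave off as ``the core computation'' and ``fairly mechanical,'' and it is a circularity, not just missing detail. To certify $\mathrm{def}(R)=k-|A|$ you must \emph{construct} a matching that matches $A$ into distinct components of $I$, perfectly matches $Z$, and near-perfectly matches each component of $I$; matching $A$ into distinct components already requires Hall's condition for the bipartite contraction $B$ --- which is (the weak form of) property (\ref{ti:NbrGalEd}), the very statement you defer to the end and propose to deduce from property (\ref{ti:MaxGalEd}). As arranged, (\ref{ti:NbrGalEd}) depends on (\ref{ti:MaxGalEd}), which depends on barrier tightness, which depends on (\ref{ti:NbrGalEd}). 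Your auxiliary localization lemma has the same character: a maximum matching of $R$ missing $v\in I_j$ may match other vertices of $I_j$ into $A$, so its restriction to $I_j$ need not be near-perfect, and proving that inessentiality localizes to each $I_j$ is not a formality --- it is the crux. The standard repair for both problems is Gallai's Stability Lemma: for $a\in A(R)$ one has $I(R-a)=I(R)$, hence $A(R-a)=A(R)-\{a\}$, proved by an alternating-path argument; induction on $|A|$, deleting the vertices of $A$ one at a time, then reduces to the graph $I\cup Z$ whose $I$-components satisfy the hypotheses of Gallai's Lemma (giving property (\ref{ti:IFactCrit})) and simultaneously builds the witnessing matching, from which (\ref{ti:NbrGalEd})--(\ref{ti:MaxGalEd}) and the deficiency formula follow in a non-circular order. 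With that lemma inserted your plan is sound; the ``moreover'' clause is unproblematic ($n+1$ runs of the blossom algorithm, or the even-alternating-path test you describe).
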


We begin be proving an $r$-stacking equivalence for eccentricity two vertices in terms of matchings.

\begin{lem}
\label{l:Char}
Let $G$ be a graph with vertex $r$ having eccentricity 2.
Then $G$ is $r$-stackable if and only if $G$ has a matching that saturates $N_2(r)$.
\end{lem}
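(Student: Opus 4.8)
The plan is to prove both directions through the Stacking Partition Lemma (Lemma~\ref{l:Partition}). Write $N_1(r)=N(r)$ for the neighbors of $r$ and $N_2(r)$ for the vertices at distance exactly $2$ from $r$; since $\ecc(r)=2$ we have $V(G)=\{r\}\cup N_1(r)\cup N_2(r)$, and throughout $C=\b1$. The single structural fact driving everything is this: in any $r$-stacking partition $\{(H_1,D_1),\ldots,(H_k,D_k)\}$, feasibility of a nonempty part supplies a vertex $s_i\in V(H_i)$ with $\dist(s_i,r)=|D_i|$, and since $r\notin V(H_i)$ (property~\ref{p:union}) we have $s_i\ne r$, whence $1\le |D_i|=\dist(s_i,r)\le 2$. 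As each $D_i$ is a sub-configuration of $\b1-r$, it places at most one cup per vertex, so $|D_i|\in\{1,2\}$ and $\supp(D_i)$ is either a single vertex or a pair.

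For the direction ($\Leftarrow$), suppose $G$ has a matching $M$ saturating $N_2(r)$; I would first discard any edge of $M$ missing $N_2(r)$, so that every remaining edge has an endpoint in $N_2(r)$. Then I build an $r$-stacking partition as follows. Each edge $vw\in M$ with $v\in N_2(r)$ becomes a part $(H,D)$, where $H$ is the edge $vw$ and $D$ places one cup on each of $v,w$: this part is $(D,v)$-stackable via the single move $w\peb v$ (distance $1$, leaving two cups on $v$), and $\dist(v,r)=2=|D|$, so it is $(D,r)$-feasible and trivially preserves distances. Every vertex $u\in N_1(r)$ not covered by such an edge becomes a singleton part $(\{u\},D)$ with one cup on $u$, which is $(D,u)$-stackable with $\dist(u,r)=1=|D|$. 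These parts have disjoint supports covering $V(G)-\{r\}$ and their cup counts partition $\b1-r$, so all four properties hold and Lemma~\ref{l:Partition} gives $(C,r)$-stackability.

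For the direction ($\Rightarrow$), suppose $G$ is $r$-stackable and fix an $r$-stacking partition $\{(H_i,D_i)\}$. The heart of the argument is to show that every $v\in N_2(r)$ lies in a part whose support is an edge. Let $(H_i,D_i)$ be the unique part with $v\in\supp(D_i)$. If $|D_i|=1$ then $v$ carries the only cup of $D_i$; but a lone cup admits no stacking move, since any target of a move must already hold a cup, so the only vertex onto which $D_i$ can stack is $v$ itself, forcing $s_i=v$ and the contradiction $2=\dist(v,r)=|D_i|=1$. Hence $|D_i|=2$ and $\supp(D_i)=\{v,w\}$ with one cup each. Stacking these two cups onto $s_i$ requires a first (and only) move merging them, namely $v\peb w$ or $w\peb v$, which is legal only when $v$ and $w$ are adjacent; thus $\supp(D_i)=\{v,w\}\in E(G)$. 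Finally, since distinct parts have disjoint supports, the collection of all such two-vertex supports is a set of pairwise vertex-disjoint edges, i.e.\ a matching of $G$, and it saturates $N_2(r)$ because each $v\in N_2(r)$ sits in one of these edge-supported parts.

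The main obstacle is exactly this forward structural step: ruling out singleton parts for $N_2(r)$-vertices and extracting an honest matching from an arbitrary, possibly long and tangled, stacking sequence. The Stacking Partition Lemma is what tames the dynamics, collapsing them into disjoint feasible parts whose sizes are pinned to $\{1,2\}$ by the eccentricity-$2$ hypothesis; the remaining work is bookkeeping, namely verifying distance preservation (property~\ref{p:distances}) for the one- and two-vertex parts in the ($\Leftarrow$) construction and discarding matching edges that miss $N_2(r)$.
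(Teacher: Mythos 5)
Your proof is correct and takes essentially the same route as the paper's: the ($\Leftarrow$) direction builds an $r$-stacking partition from the matched edges plus singleton parts for the remaining $N_1(r)$-vertices, and the ($\Rightarrow$) direction uses Lemma~\ref{l:Partition} and the eccentricity bound to force every part holding an $N_2(r)$-vertex to have $|D_i|=2$ with adjacent support, from which the matching is read off. Your explicit preliminary step of discarding edges of $M$ that miss $N_2(r)$ is a minor tightening of the paper's argument, which implicitly assumes each matching edge has an endpoint $y_i\in N_2(r)$.
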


\begin{proof}
Let $G$ be a graph with vertex $r$ having eccentricity 2.

Suppose that $n$-vertex $G$ has a matching $M=\{e_1,\ldots,e_k\}$ that saturates $N_2(r)$.
Label the vertices of each $e_i$ by $x_i$ and $y_i$, with $y_i\in N_2(r)$, and define $H_i=e_i$ and $D_i=\b1$ for $1\le i\le k$.
Then $|D_i|=2$ and each $H_i$ is $y_i$-feasible with $\dist(y_i,r)=2$.
Now label the $M$-unsaturated vertices of $G-r$ by $z_1,\ldots z_j$, where $j=n-1-2k$, and define $H_{k+i}=z_i$ and $D_{k+i}=\b1$ for all $1\le i\le j$.
Since each $z_i\in N(r)$ and $|D_{k+1}|=1$, $H_{k+i}$ is trivially $z_i$-feasible with $\dist(z_i,r)=1$.
Hence $\cP=\{(H_1,D_1),\ldots,(H_{k+j},D_{k+j})\}$ is an $r$-stacking partition, which implies by Lemma \ref{l:Partition} that $G$ is $r$-stackable.

Now suppose that $G$ is $r$-stackable; by Lemma \ref{l:Partition} $G$ has an $r$-stacking partition $\cP=\{(H_1,D_1),\ldots,$ $(H_k,D_k)\}$; let $\{v_1,\ldots,v_k\}$ be the set of vertices for which each $H_i$ is $(D_i,v_i)$-stackable.
Because $\dist(u,r)\le 2$ for all $r$ we have $|D_i|\le 2$ for all $1\le i\le k$.
Consider any vertex $v\in N_2(r)$, and suppose that $v\in H_i$ for some $i$  (which we can denote by $i(v)$ as necessary to emphasize its dependence on $v$) with $D_i(v)=1$.
If $v=v_i$ then there is a unique vertex $u_i$ adjacent to $v$ such that the cup on $u_i$ is moved to $v_i$.
If $v\not=v_i$ then set $u_i=v$ and note that the cup on $u_i$ is moved to $v_i$.
In either case label the edge $e_i=u_iv_i$.
Now the set of edges $M=\{e_i\mid v\in N_2(r), i=i(v)\}$ (ignoring duplicates that occur when both $u_i, v_i\in N_2(r)$) forms a matching that saturates $N_2(r)$.
\end{proof}

Lemma \ref{l:Char} yields the following five corollaries.

\begin{cor}
\label{c:d2Trees}
Let $r$ be a vertex of eccentricity 2 in a tree $T$.
Then $T$ is $r$-stackable if and only if $T$ is a spider.
\end{cor}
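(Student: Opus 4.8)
The plan is to apply the eccentricity-two matching criterion of Lemma~\ref{l:Char} and then argue that, for a tree, possessing a matching that saturates $N_2(r)$ is the same as being a spider with root $r$. The whole argument rests on the very rigid shape a tree must have around an eccentricity-two vertex, which trivializes the matching question.

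First I would record the structural fact that in a tree $T$ with $\ecc(r)=2$ every vertex of $N_2(r)$ is a leaf. Each $w\in N_2(r)$ has a unique neighbor $p(w)\in N(r)$ lying on the $r$--$w$ geodesic; any other neighbor of $w$ would sit at distance $3$ from $r$ (the $r$--$w$ path is unique in a tree), contradicting $\ecc(r)=2$. Thus the only edge incident to $w$ is $w\,p(w)$. Consequently, in any matching the only possible edge covering $w$ is $w\,p(w)$, so a matching saturates $N_2(r)$ if and only if all of the edges $\{w\,p(w):w\in N_2(r)\}$ can be used simultaneously, i.e.\ if and only if the assignment $w\mapsto p(w)$ is injective. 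Equivalently, no two distance-two vertices share a parent in $N(r)$.

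Next I would translate this into a degree condition. A neighbor $u\in N(r)$ has exactly one edge to $r$ and otherwise only edges to its children in $N_2(r)$, so $\deg(u)=1+|\{w\in N_2(r):p(w)=u\}|$; hence ``no two distance-two vertices share a parent'' is exactly ``$\deg(u)\le 2$ for every $u\in N(r)$.'' Because the distance-two vertices are leaves and there are no vertices farther from $r$, the neighbors of $r$ are the only non-root vertices that can have degree exceeding two, so this is the same as ``$\deg(v)\le 2$ for every $v\neq r$.'' That last condition says precisely that $T-r$ is a disjoint union of paths, each joined to $r$ at one endpoint (its legs, here of length one or two)---that is, that $T$ is a spider with root $r$. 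Chaining these equivalences with Lemma~\ref{l:Char} yields $r$-stackable $\iff$ spider.

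The substantive content is the single observation that eccentricity two forces $N_2(r)$ to consist of leaves; once that is in hand, the saturating matching is forced edge by edge and everything reduces to counting children, so I do not expect a genuine obstacle. The only point demanding care is the reading of ``spider'': the equivalence is with $T$ being a spider \emph{rooted at} $r$ (equivalently, every non-root vertex has degree at most two). One should note that this includes the degenerate one-leg case where $r$ is an endpoint of a short path such as $P_3$---which is $r$-stackable via $v_2\mapsto v_3,\ v_3\mapsto v_1$---and that it correctly excludes graphs like $S(2,1,1)$ with $r$ adjacent to the center, where the two distance-two leaves share the center as their common parent and no matching can saturate both.
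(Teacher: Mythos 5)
Your proof is correct and follows essentially the same route as the paper's: invoke Lemma~\ref{l:Char}, observe that $\ecc(r)=2$ forces every vertex of $N_2(r)$ to be a leaf with a unique potential matching edge to its parent in $N_1(r)$, and note that these forced edges form a matching simultaneously if and only if every $v\in N_1(r)$ has $\deg(v)\le 2$, i.e.\ $T$ is a spider. Your closing caveat that ``spider'' must be read as ``spider rooted at $r$'' is well taken---the paper's proof makes the same identification only implicitly, and your example $S(2,1,1)$ with $r$ adjacent to the split vertex shows the unrooted reading of the statement would actually fail.
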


\begin{proof}
Let $T$ be a tree with vertex $r$ having eccentricity 2. 
Note that, because $\ecc(r)=2$, each $w\in N_2(r)$ is a leaf.
Thus each such leaf can be matched simultaneously to its unique neighbor in $N_1(r)$ if and only if each $v\in N_1(r)$ has $\deg(v)\le 2$; that is, if and only if $T$ is a spider.
The result then follows from Lemma \ref{l:Char}.

\end{proof}

\begin{cor}
\label{c:CompBip}
Let $G=K_{a_1,\ldots,a_t}$ denote the complete $t$-partite graph on $n$ vertices with parts $A_1,\ldots,A_t$ having sizes $|A_i|=a_i$, and let $r$ be a vertex in part $A_i$.
Then $G$ is $r$-stackable if and only if $a_i\leq (n+1)/2$.
Consequently, $G$ is stackable if and only if $a_i\leq (n+1)/2$ for every $i$.
\end{cor}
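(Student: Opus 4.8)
The plan is to invoke the matching characterization of Lemma \ref{l:Char}, so my first task is to confirm that its eccentricity hypothesis is met and to dispose of the case where it is not. Fix $r \in A_i$ and write $V=V(G)$. Since a complete $1$-partite graph is edgeless (hence disconnected), we have $t\ge 2$, so $V\setminus A_i\neq\emptyset$. If $a_i=1$, then $r$ is adjacent to every other vertex, i.e. $r$ is a dominating vertex, so $G$ is $r$-stackable by Fact \ref{f:Dom}; as $1\le (n+1)/2$ whenever $n\ge 1$, the claimed equivalence holds trivially here. I would therefore assume $a_i\ge 2$ for the remainder. In that case any two vertices of $A_i$ share a common neighbor in another part, so $r$ is adjacent to exactly the vertices of $V\setminus A_i=N_1(r)$ and lies at distance $2$ from exactly the vertices of $A_i\setminus\{r\}=N_2(r)$. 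In particular $\ecc(r)=2$, and Lemma \ref{l:Char} tells us that $G$ is $r$-stackable if and only if $G$ has a matching saturating $N_2(r)$.

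The key observation is that $N_2(r)=A_i\setminus\{r\}\subseteq A_i$, and the vertices of $A_i$ are pairwise non-adjacent. Hence in any matching, every edge meeting $N_2(r)$ must have its other endpoint outside $A_i$, i.e. in $N_1(r)$ (it cannot be $r$, which also lies in $A_i$). Since $G$ is complete multipartite, every vertex of $N_2(r)$ is adjacent to every vertex of $N_1(r)$, so the bipartite graph induced between these two classes is the complete bipartite graph with parts of sizes $a_i-1$ and $n-a_i$. A complete bipartite graph admits a matching saturating a given class exactly when the opposite class is at least as large, so a matching saturating $N_2(r)$ exists if and only if $n-a_i\ge a_i-1$, equivalently $a_i\le (n+1)/2$. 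Combining this with Lemma \ref{l:Char} yields the first equivalence; note that no Hall-type argument is needed because the relevant bipartite graph is complete.

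For the ``consequently'' statement, recall that $G$ is stackable precisely when it is $r$-stackable for every target $r$. Since the condition $a_i\le (n+1)/2$ depends only on the part $A_i$ containing $r$, and each part contains at least one target, $G$ is stackable if and only if $a_i\le (n+1)/2$ holds for every $i$.

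The only real subtlety, and the step I would be most careful with, is the bookkeeping around the eccentricity hypothesis: Lemma \ref{l:Char} requires $\ecc(r)=2$, which fails exactly when $a_i=1$, so that case must be handled separately via Fact \ref{f:Dom} (and one checks the inequality still reads correctly there). Once $\ecc(r)=2$ is established and $N_1(r),N_2(r)$ are identified, the matching count is elementary.
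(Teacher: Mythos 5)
Your proof is correct and follows essentially the same route as the paper's: both invoke Lemma \ref{l:Char} to reduce $r$-stackability to the existence of a matching saturating $N_2(r)=A_i\setminus\{r\}$, and both settle that by the same counting argument in the complete bipartite graph between $A_i\setminus\{r\}$ and $V(G)\setminus A_i$, giving $n-a_i\ge a_i-1$. Your separate treatment of the $a_i=1$ case via Fact \ref{f:Dom} is in fact slightly more careful than the paper's proof, which simply asserts that $G$ has diameter two and thereby glosses over the possibility that $\ecc(r)=1$ when $r$'s part is a singleton.
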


\begin{proof}
Let $G$ be a complete $t$-partite graph with partite sets $A_1,\ldots,A_t$ having sizes $|A_i|=a_i$. 
Notice that $G$ has diameter 2 and so, by utilizing Lemma \ref{l:Char}, $G$ is $r$-stackable if and only if $G$ has a matching that saturates $N_2(r)=A_i-\{r\}$; i.e. there exists a matching $M$ that saturates $N_2(r)$. 
This happens if and only if $n-a_i = |V(G)-A_i| \ge |M| = |N_2(r)|=a_i-1$, which is equivalent to $a_i\le (n+1)/2$.  
\end{proof}

\begin{cor}
\label{c:Pete}
The Petersen graph is stackable.
\end{cor}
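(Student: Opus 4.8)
The plan is to apply the matching characterization of Lemma \ref{l:Char} together with the vertex-transitivity of the Petersen graph. Denote the Petersen graph by $G$. First I would observe that $G$ is vertex-transitive, so by Fact \ref{f:VTrans} it suffices to prove that $G$ is $r$-stackable for a single (arbitrary) vertex $r$. Next I would note that $G$ has diameter $2$ and is not complete, so every vertex has eccentricity exactly $2$; in particular $\ecc(r)=2$, which places us squarely in the hypothesis of Lemma \ref{l:Char}.

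With the eccentricity-two reduction in hand, Lemma \ref{l:Char} tells us that $G$ is $r$-stackable if and only if $G$ admits a matching saturating $N_2(r)$. Since $G$ is $3$-regular on $10$ vertices, $N_2(r)$ consists of the $6$ vertices at distance $2$ from $r$. Rather than build a matching tailored to $N_2(r)$, the key step is simply to exhibit a \emph{perfect} matching of $G$: any perfect matching saturates all $10$ vertices, and hence \emph{a fortiori} saturates the six vertices of $N_2(r)$. The Petersen graph certainly has a perfect matching --- for instance, in the standard drawing with an outer $5$-cycle, an inner pentagram, and five spokes joining them, the five spokes form a perfect matching (alternatively, this follows from Petersen's theorem that every bridgeless cubic graph has a perfect matching).

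Putting these together: the perfect matching saturates $N_2(r)$, so by Lemma \ref{l:Char} the graph $G$ is $r$-stackable, and then by Fact \ref{f:VTrans} $G$ is stackable. I do not anticipate a genuine obstacle here --- the entire argument is a short application of Lemma \ref{l:Char} once one recalls that the Petersen graph has a perfect matching. The only point requiring a sentence of care is the verification that every vertex has eccentricity $2$ (so that Lemma \ref{l:Char} applies), which is immediate from diameter $2$ together with the absence of a dominating vertex.
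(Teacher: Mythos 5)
Your proposal is correct and follows essentially the same route as the paper: reduce to a single target via vertex-transitivity (Fact \ref{f:VTrans}), note eccentricity $2$, and apply Lemma \ref{l:Char} by exhibiting a matching that saturates $N_2(r)$ --- the paper simply displays an explicit four-edge matching in a figure, whereas you invoke the spoke perfect matching, which saturates $N_2(r)$ \emph{a fortiori} (after discarding the edge at $r$ and any edges missing $N_2(r)$, which is harmless since any sub-matching still saturates $N_2(r)$). The perfect-matching shortcut is a slightly cleaner way to produce the required matching, but it is not a genuinely different argument.
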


\begin{proof}
Consider the Petersen graph $G_P$. 
By symmetry, we only need to consider one target vertex $r$. 
Since $G_P$ has diameter 2, by Lemma \ref{l:Char}, $G_P$ is stackable if and only if $N_2(r)$ has a saturating matching. 
The matching in Figure \ref{fig:Pete}, saturates $N_2(r)$.
\end{proof}

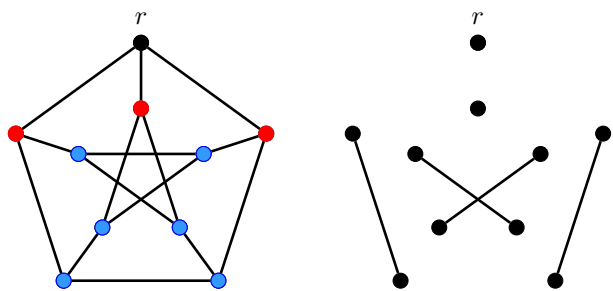
\begin{figure}[hb]
\begin{center}
\begin{tikzpicture}[scale=.35]
\tikzstyle{every node}=[draw,circle,fill=black,minimum size=1pt,inner sep=2pt]

    \begin{scope}[rotate=90]
        \foreach \x/\y in {0/1,72/2,144/3,216/4,288/5}{
            \draw node (\y) at (canvas polar cs: radius=2.5cm,angle=\x){};
            }
        \foreach \x/\y in {0/6,72/7,144/8,216/9,288/10}{
            \draw node (\y) at (canvas polar cs: radius=5cm,angle=\x){};
            }
    \end{scope}

    \foreach \x/\y in {1/6,2/7,3/8,4/9,5/10}{
        \draw[line width=1.0pt, color=black] (\x) -- (\y);
        }

    \foreach \x/\y in {1/3,2/4,3/5,4/1,5/2}{
        \draw[line width=1.0pt, color=black] (\x) -- (\y);
        }

    \foreach \x/\y in {6/7,7/8,8/9,9/10,10/6}{
        \draw[line width=1.0pt, color=black] (\x) -- (\y);
        }
    \draw node [fill=black,label=above: {$r$}] at (6) {};
    
    \foreach \x in {1,7,10}{
        \draw node [red,fill=red] at (\x) {};
        }
    \foreach \x in {2,...,5,8,9}{
        \draw node [blue,fill=blu] at (\x) {};
        }
\end{tikzpicture}
$\qquad$
\begin{tikzpicture}[scale=.35]
\tikzstyle{every node}=[draw,circle,fill=black,minimum size=1pt,inner sep=2pt]

    \begin{scope}[rotate=90]
        \foreach \x/\y in {0/1,72/2,144/3,216/4,288/5}{
            \draw node (\y) at (canvas polar cs: radius=2.5cm,angle=\x){};
            }
        \foreach \x/\y in {0/6,72/7,144/8,216/9,288/10}{
            \draw node (\y) at (canvas polar cs: radius=5cm,angle=\x){};
            }
    \end{scope}
    \foreach \x/\y in {2/4,3/5,7/8,9/10}{
        \draw[line width=1.0pt, color=black] (\x) -- (\y);
        }
    \draw node [label=above: {$r$}] at (6) {};
\end{tikzpicture}
\caption{The Petersen graph, showing ${\color{red} N_1(r)}$ and ${\color{blue} N_2(r)}$, left, and an $N_2(r)$ saturating matching, right.}
\label{fig:Pete}
\end{center}
\end{figure}

In fact, we can show that infinitely many Kneser graphs are stackable.
We write $[m] = \{1,\ldots,m\}$ and define $\binom{[m]}{k}$ to be the set of all subsets of $[m]$ of size exactly $k$. 
For $m\ge 2k+1\ge 5$, the {\it Kneser graph} $K(m,k)$ has vertex set $\binom{[m]}{k}$, with edges between disjoint pairs.
For example, the Petersen graph is $K(5,2)$.
We begin by observing a more general corollary to Lemma \ref{l:Char}.

\begin{cor}
\label{c:HamD2Stack}
Let $G$ be a graph with vertex $r$ having eccentricity 2.
If $G$ has a Hamiltonian path then $G$ is $r$-stackable.
\end{cor}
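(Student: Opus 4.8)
The plan is to invoke the matching characterization of Lemma \ref{l:Char}, which reduces the task entirely to exhibiting a single matching of $G$ that saturates $N_2(r)$. The conceptual key is that we are \emph{not} required to saturate all of $G-r$: the vertices of $N_1(r)$ are allowed to remain uncovered, since in an $r$-stacking partition a distance-$1$ vertex forms its own singleton block and needs no partner. So it suffices to cover the distance-$2$ vertices.

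First I would fix a Hamiltonian path $P = w_1 w_2 \cdots w_n$ of $G$ and delete $r$ from it. Because $\ecc(r)=2$, the vertex set of $G-r$ is exactly $N_1(r)\cup N_2(r)$. Deleting $r$ from $P$ leaves at most two subpaths: a single subpath if $r$ is an endpoint of $P$, and two subpaths $Q_1,Q_2$ if $r=w_j$ is internal (with $Q_1 = w_1\cdots w_{j-1}$ and $Q_2 = w_{j+1}\cdots w_n$). The structural observation that drives the proof is that in every case each resulting subpath has, precisely at the end where $r$ was excised, an endpoint that was a neighbor of $r$ on $P$ and therefore lies in $N_1(r)$.

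Next I would match each subpath independently using only its own edges. A path on an even number of vertices has a perfect matching, and a path on an odd number of vertices has a matching saturating every vertex except any one prescribed endpoint (pair up consecutive vertices starting from the opposite end). I would choose the sacrificed endpoint to be the $r$-adjacent endpoint identified above; then every unsaturated vertex lies in $N_1(r)$. Hence the union of these one or two matchings is a matching of $G$ that saturates all of $N_2(r)$, and Lemma \ref{l:Char} then gives that $G$ is $r$-stackable.

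The step I expect to require the most care is the endpoint bookkeeping: one must verify that the at most two vertices left unmatched can always be forced to lie in $N_1(r)$ rather than $N_2(r)$, including the degenerate cases where a subpath is a single vertex (which is then automatically an $r$-neighbor) or where $r$ is an endpoint of $P$ so that only one subpath arises. None of these cases is genuinely hard, but they are exactly where a careless phrasing of ``we can always drop the leftover vertex into $N_1(r)$'' could fail, so I would state the odd/even dichotomy and the choice of sacrificed endpoint explicitly.
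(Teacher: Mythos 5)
Your proposal is correct and matches the paper's proof essentially verbatim: both split the Hamiltonian path at $r$ into at most two subpaths, take a perfect matching on each even subpath, and on each odd subpath sacrifice the endpoint adjacent to $r$ (which lies in $N_1(r)$), so the resulting matching saturates $N_2(r)$ and Lemma~\ref{l:Char} applies. The endpoint bookkeeping you flag is exactly the step the paper handles via its choice of $v_i$, so there is nothing to add.
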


\begin{proof}
Consider a Hamiltonian path $P$ of $G$ and let $P-r$ be the disjoint union of paths $P_1$ and $P_2$, where $P_2$ is empty if $r$ is an endpoint of $P$.
For each $i$ let $v_i$ be the endpoint of $P_i$ that is a neighbor of $r$ in $P$.
If $n(P_i)$ is even then let $M_i$ be the perfect matching in $P_i$, while if $n(P_i)$ is odd then let $M_i$ be the perfect matching in $P_i-v_i$.
In both cases $M_i$ saturates $N_2(r)\cap V(P_i)$, and so $M_1\cup M_2$ saturates $N_2(r)$.
Hence $G$ is $r$-stackable by Lemma \ref{l:Char}.
\end{proof}

Notice that the hypothesis of Theorem \ref{c:HamD2Stack} contains diameter two Hamiltonian graphs.
With regard to Kneser graphs, we will use the following fact, which is a special case of Theorem \ref{t:JohnsonHam} below.

\begin{fct}
\label{f:KneserHam}
For all $m\ge 2k+1\ge 5$ except the case $(m,k)=(5,2)$ the Kneser graph $K(m,k)$ is Hamiltonian.
\end{fct}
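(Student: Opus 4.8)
The plan is to recognize the Kneser graph as an instance of a generalized Johnson graph and then invoke Theorem~\ref{t:JohnsonHam}. Recall that a generalized Johnson graph has vertex set $\binom{[m]}{k}$, with two $k$-sets declared adjacent precisely when their intersection has a fixed prescribed size $s$. First I would observe that the Kneser graph $K(m,k)$ is exactly the generalized Johnson graph obtained by taking $s=0$: by definition the edges of $K(m,k)$ join disjoint pairs, and two $k$-sets are disjoint if and only if their intersection has size $0$. This identification is immediate and requires no computation.

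With that identification in hand, the result follows directly from Theorem~\ref{t:JohnsonHam} (the theorem of Merino, M\"utze, and Namrata), which asserts that every generalized Johnson graph other than the Petersen graph is Hamiltonian. The only case one must track separately is the lone exception in that theorem. Since the Petersen graph is $K(5,2)$ (as already noted above, $K(5,2)$ is the Petersen graph), the single excluded generalized Johnson graph corresponds exactly to the parameter pair $(m,k)=(5,2)$ that is ruled out in the statement of the fact. Hence for all $m\ge 2k+1\ge 5$ with $(m,k)\neq(5,2)$, the graph $K(m,k)$ is Hamiltonian.

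There is essentially no obstacle internal to this fact: all of the genuine difficulty is absorbed into Theorem~\ref{t:JohnsonHam}, whose proof is the deep recent contribution and which we do not reprove. The only points that warrant a moment's care are (i) confirming that the excluded case of the general theorem, namely the Petersen graph, lines up precisely with the excluded parameters $(m,k)=(5,2)$, and (ii) checking that the hypothesis $2k+1\ge 5$ correctly restricts attention to $k\ge 2$; the boundary case $k=1$ gives complete graphs $K_m$, which fall outside this range and are in any event trivially Hamiltonian.
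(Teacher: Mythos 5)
Your proof is correct and takes exactly the paper's route: the paper states Fact~\ref{f:KneserHam} as nothing more than the special case $s=0$ of Theorem~\ref{t:JohnsonHam}, with the lone exception $J(5,2,0)=K(5,2)$ being the Petersen graph. One small caution: Theorem~\ref{t:JohnsonHam} is not about \emph{every} generalized Johnson graph but only those with $s<k\le (m+s-1)/2$, which at $s=0$ reads $0<k\le (m-1)/2$, i.e.\ precisely the Kneser range $m\ge 2k+1$ --- your write-up handles this only implicitly, so make that parameter check explicit.
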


\begin{cor}
\label{c:KneserStack}
For all $m\ge 3k-1\ge 5$ the Kneser graph $K(m,k)$ is stackable.
\end{cor}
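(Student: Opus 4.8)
The plan is to combine vertex-transitivity, a diameter-two computation, and the Hamiltonicity input Fact~\ref{f:KneserHam}, and then invoke Corollary~\ref{c:HamD2Stack}. Since the symmetric group $S_m$ acts transitively on the $k$-subsets of $[m]$, the graph $K(m,k)$ is vertex-transitive, so by Fact~\ref{f:VTrans} it suffices to prove that $K(m,k)$ is $r$-stackable for a single target vertex $r$.

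The one genuine computation is to verify that the hypothesis $\ecc(r)=2$ of Corollary~\ref{c:HamD2Stack} holds exactly in the stated range. First I would observe that $K(m,k)$ is not complete when $k\ge 2$ (two $k$-sets overlapping in $k-1$ elements are non-adjacent, and such sets exist since $m\ge 3k-1\ge k+1$), so no vertex is dominating and every eccentricity is at least $2$. For the upper bound, take non-adjacent vertices $A,B$, i.e. $A\cap B\ne\emptyset$; a common neighbor of $A$ and $B$ is any $k$-set disjoint from $A\cup B$, and such a set exists precisely when $m-|A\cup B|\ge k$. Since $|A\cup B|=2k-|A\cap B|\le 2k-1$, this holds whenever $m\ge 3k-1$, so under the hypothesis every non-adjacent pair is at distance $2$ and hence $\diam(K(m,k))=2$. (The threshold is sharp: a pair with $|A\cap B|=1$ needs exactly $m\ge 3k-1$.) In particular $\ecc(r)=2$ for every $r$.

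With the eccentricity condition in hand, I would feed $K(m,k)$ a Hamiltonian path. The constraints $m\ge 3k-1\ge 5$ force $k\ge 2$ and $m\ge 2k+1\ge 5$, so Fact~\ref{f:KneserHam} applies and gives a Hamiltonian cycle, hence a Hamiltonian path, for every $(m,k)\ne(5,2)$ in range. Corollary~\ref{c:HamD2Stack} then yields $r$-stackability, and Fact~\ref{f:VTrans} upgrades this to stackability. The single excluded case $(m,k)=(5,2)$ is the boundary case $m=3k-1=5$; here $K(5,2)$ is the Petersen graph, already shown to be stackable in Corollary~\ref{c:Pete} (equivalently, the Petersen graph has a Hamiltonian path, so Corollary~\ref{c:HamD2Stack} applies directly).

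The proof is essentially an assembly of the cited results, so there is no serious obstacle; the only places requiring care are matching the arithmetic of the common-neighbor count to the exact bound $m\ge 3k-1$, and remembering to treat the Petersen boundary case separately, since Fact~\ref{f:KneserHam} supplies a Hamiltonian cycle only for $(m,k)\ne(5,2)$.
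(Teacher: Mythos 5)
Your proposal is correct and takes essentially the same route as the paper's proof: verify $\diam(K(m,k))=2$ from $m\ge 3k-1$ via the bound $|A\cup B|\le 2k-1$ and a disjoint common neighbor, invoke Fact~\ref{f:KneserHam} together with Corollary~\ref{c:HamD2Stack}, handle the boundary case $(m,k)=(5,2)$ separately through Corollary~\ref{c:Pete}, and finish by vertex transitivity. Your additional checks (non-completeness forcing eccentricity exactly $2$, and sharpness of the threshold) are harmless extra rigor beyond what the paper writes.
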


\begin{proof}
The case $K(5,2)$ is proven in Theorem \ref{c:Pete}.
In all other cases, we have that $K(m,t)$ is Hamiltonian by Fact \ref{f:KneserHam}.
Moreover, since $m\ge 3t-1$, $K(m,t)$ has diameter two.
Indeed, if vertices $A$ and $B$ are not adjacent, then $|A\cup B|\le 2t-1$, and so there is some $C\ \sse\ [m]-(A\cup B)$ that is therefore a common neighbor of $A$ and $B$.
Hence Corollary \ref{c:HamD2Stack} implies that $K(m,t)$ is $r$-stackable.
Because Kneser graphs are vertex transitive, $K(m,t)$ is $r$-stackable for all $r$.
\end{proof}

It would be interesting to know whether or not all Kneser graphs are stackable, as Corollary \ref{c:KneserStack} leaves open the cases $5\le 2k+1\le m\le 3k-2$.

Now we are ready to prove the following theorem.

\begin{thm}
\label{t:Ecc2Algo}
Let $G$ be a graph with vertex $r$ having eccentricity 2.
Then it can be decided in polynomial time whether or not $G$ is $r$-stackable and, in the case that it is, an $r$-stacking can be found in polynomial time.
\end{thm}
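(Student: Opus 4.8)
The plan is to reduce the problem to matching via Lemma~\ref{l:Char} and then to extract the answer from the Gallai--Edmonds partition, using a bipartite matching computation to resolve the remaining freedom. By Lemma~\ref{l:Char}, $G$ is $r$-stackable if and only if $G$ admits a matching saturating $S := N_2(r)$. The first observation I would record is that if any matching saturates $S$, then some \emph{maximum} matching does: augmenting a matching along an augmenting path only enlarges its set of saturated vertices, so repeatedly augmenting an $S$-saturating matching up to a maximum one keeps $S$ saturated. Thus I may restrict attention to maximum matchings, whose structure is completely described by the Gallai--Edmonds partition $(I,A,Z)$ of $G$, computable in polynomial time by Theorem~\ref{t:GalEd} (via the Blossom algorithm).

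Next I would read off which vertices of $S$ are automatically safe. By part~\ref{ti:MaxGalEd} of Theorem~\ref{t:GalEd}, every maximum matching perfectly matches $Z$ and saturates all of $A$, so $S\cap(A\cup Z)$ is saturated by every maximum matching and needs no attention; the only danger is $S\cap I$. Each component $K$ of $I$ is factor-critical (part~\ref{ti:IFactCrit}), so in a maximum matching either $K$ receives one edge from $A$ and is then entirely saturated (since $K$ minus that endpoint has a perfect matching), or $K$ receives no $A$-edge, in which case exactly one vertex of $K$ is left unsaturated --- and by factor-criticality that vertex may be taken to be \emph{any} vertex of $K$. Call a component $K$ \emph{dangerous} if $K\sse S$. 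A dangerous component not matched into $A$ must leave an $S$-vertex unsaturated, whereas any component containing a vertex outside $S$ can have its free vertex chosen outside $S$. Hence a maximum matching saturating $S$ exists if and only if the edges from $A$ to distinct components of $I$ can be chosen so that every dangerous component is matched to $A$.

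This last condition is a bipartite matching problem, which is where the Hungarian algorithm enters. Form the bipartite graph $B$ with parts $A$ and the set of components of $I$, joining $a\in A$ to a component $K$ whenever $a$ is adjacent in $G$ to some vertex of $K$. A maximum matching must saturate all of $A$ (and Hall's condition for this is guaranteed by part~\ref{ti:NbrGalEd}), so I need a matching of $B$ that saturates $A$ together with every dangerous component. Giving each edge of $B$ weight equal to the number of its endpoints lying in $A$ or among the dangerous components, a maximum-weight matching of $B$ (found by the Hungarian algorithm) covers the required set if and only if its weight equals $|A|$ plus the number of dangerous components; this decides $r$-stackability in polynomial time. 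In the feasible case, the optimal matching of $B$ specifies which $A$-vertex to attach to each component; completing every component with a near-perfect matching (leaving the free vertex outside $S$ in the safe components) and perfectly matching $Z$ yields an explicit $S$-saturating matching $M$ of $G$. Feeding $M$ into the construction in the proof of Lemma~\ref{l:Char} produces an $r$-stacking partition, and the constructive forward direction of Lemma~\ref{l:Partition} then turns this partition into an explicit sequence of stacking moves, all in polynomial time.

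The main obstacle will be the correctness of the reduction in the second paragraph: I must argue carefully, using factor-criticality to relocate the single free vertex within each component and using the stronger Hall-type condition of part~\ref{ti:NbrGalEd} to guarantee enough room to route $A$ into the dangerous components, that the local freedom available in each maximum matching can be combined globally without conflict. Once this structural equivalence between ``$S$-saturating maximum matching'' and ``matching of $B$ covering $A$ and all dangerous components'' is nailed down, the algorithmic claims follow immediately from the polynomial-time guarantees of the Blossom and Hungarian algorithms together with Lemmas~\ref{l:Char} and~\ref{l:Partition}.
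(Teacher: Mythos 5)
Your proposal is correct and follows essentially the same route as the paper: reduce via Lemma~\ref{l:Char} to finding an $N_2(r)$-saturating matching, analyze maximum matchings through the Gallai--Edmonds partition, and decide which components of $I$ must receive $A$-edges by a maximum-weight bipartite matching computed with the Hungarian algorithm, with the Blossom algorithm supplying the perfect matching of $Z$ and the (near-)perfect matchings inside the factor-critical components. The differences are minor --- you take the Gallai--Edmonds partition of $G$ rather than of $G-r$ (legitimate, thanks to your observation that an $S$-saturating matching can be augmented to a maximum one, and $r\notin N_2(r)$ so the component containing $r$ is never forced) and you use weights $1$ and $2$ in place of the paper's priority weights $|I^*|+1$ --- and your explicit ``dangerous component'' condition $K\sse N_2(r)$ is in fact a welcome clarification of the paper's ambiguous $I^*_2$, since a component of $I$ meeting both $N_1(r)$ and $N_2(r)$ need not be matched into $A$.
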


\begin{proof} 
Let $G$ be a graph with configuration $C=\b1$ and target vertex $r$ having eccentricity 2.
We use Lemma \ref{l:Char} to determine whether or not $G$ has a matching that saturates $N_2(r)$.

Let $(I,A,Z)$ be a Gallai-Edmonds partition of $G-r$. 
By part \ref{tia:Zmatch} of Theorem \ref{t:GalEd}, $Z$ contains a perfect matching $M_Z$. 
For any set $S$ of vertices of $G$ and $i\in\{1,2\}$, let $S_i=S\cap N_i(r)$.
Then $G$ has a matching that saturates $N_2(r)$ if and only if $G-Z$ has a matching that saturates $I_2\cup A_2$.

Let $I^*$ denote the vertices defined by contracting each component of $I$ to a single vertex.
Note by part \ref{ti:NbrGalEd} of Theorem \ref{t:GalEd} we have $|I^*|>|A|$.
Then define the bipartite graph $B$ between $I^*$ and $A$ that arises from the contraction, ignoring the edges within $A$ itself.
By part \ref{tia:Amatch} of Theorem \ref{t:GalEd}, $B$ has an $A$-saturating matching.

Given any $A$-saturating matching $M_A$ in $B$, define $I^*_A$ to be the vertices of $I^*$ saturated by $M_A$.
For each edge $ax\in M_A$ we may choose, by definition, some $y_x$ in the component $I_x$ of $I$ represented by $x\in I^*_A$ such that $ay_x\in E(G)$; let $M'_A$ be the matching of all such edges.
By part \ref{ti:IFactCrit} of Theorem \ref{t:GalEd} we can find a perfect matching $M_x$ of $I_x-y_x$ for every $x\in I^*_A$.
For each $x\in I^*-I^*_A$ we can find a near-perfect matching $M_x$ of $I_x$ by part \ref{tia:Imatch} of Theorem \ref{t:GalEd}.
Thus $G-Z$ has a matching that saturates $I_2\cup A_2$ if and only if $G-Z$ has an $A$-saturating matching $M_A$ that saturates $I^*_2$ (i.e. $I^*_2\sse I^*_A$).

Finally, let $W$ be a set of $|I^*|-|A|$ new vertices, define $A^*=A\cup W$, and let $B^*$ denote the balanced complete bipartite graph $I^*\times A^*$ with edge weights $|I^*|+1$ on every edge of $B$ incident with $I^*_2$, $1$ on every remaining edge of $B$, and $0$ on every edge not in $B$. 
Then a matching in $B^*$ has weight at least $|I_2^*|(|I^*|+1)$ if and only if it saturates $I_2^*$.

Now let $M^*$ be a perfect matching in $B^*$ of maximum weight and let $M^*_A$ be the set of its edges incident with $A$.
It is easy to see that $M^*_A$ is an $A$-saturating matching (by part \ref{tia:Amatch} of Theorem \ref{t:GalEd}) that maximizes the number of saturated vertices of $I^*_2$.
Consequently, $G-Z$ has an $A$-saturating matching $M$ that saturates $I^*_2$ if and only if $M^*_A$ saturates $I^*_2$.

Define $M_G = M_Z \cup {M^*_A}' \cup_{x\in I^*} M_x$.
Then by Lemma \ref{l:Char} we see that $G$ is $r$-stackable if and only if $M_G$ saturates $N_2(r)$.
Notice that $M_G$ can be constructed in polynomial time.
Indeed, the Gallai-Edmonds partition $(I,A,Z)$ can be found in polynomial time by Theorem \ref{t:GalEd}, Edmonds' Blossom Algorithm finds $M_Z$ and each $M_x$ in polynomial time, and the Hungarian Algorithm finds $M^*_A$, and hence ${M^*_A}'$ in polynomial time.
\end{proof} 

\noindent {\it Proof of Theorem \ref{t:Diam2}.}
This follows from Theorem \ref{t:Ecc2Algo} because every vertex of a diameter two graph has eccentricity two.
\hfill $\Box$

\subsection{Stackability of grids}
\label{ss:Grids}

\begin{figure}[ht]
\begin{center}
\begin{tikzpicture}[scale=.4]
\tikzstyle{every node}=[draw,circle,fill=black,minimum size=1pt,inner sep=2pt]
\def \s {2}
\foreach \i in {1,...,8}
    \foreach \j in {1,...,8}
        \draw[line width=0.5pt, color=black] ({\i*\s},{\j*\s}) -- ({(\i+1)*\s},{\j*\s});
\foreach \i in {1,...,2}
    \foreach \j in {1,...,3}
        \draw[line width=1.5pt, color=blue] ({\i*\s},{\j*\s}) -- ({(\i+1)*\s},{\j*\s});
\draw[line width=1.5pt, color=blue] ({3*\s},{1*\s}) -- ({4*\s},{1*\s});
\draw node[color=blue] at ({3*\s},{1*\s}) {};
\draw node[color=blue] at ({3*\s},{2*\s}) {};
\draw node[color=blue] at ({2*\s},{3*\s}) {};
\foreach \i in {5,...,8}
    \foreach \j in {1,...,3}
        \draw[line width=1.5pt, color=brwn] ({\i*\s},{\j*\s}) -- ({(\i+1)*\s},{\j*\s});
\draw node[color=brwn] at ({6*\s},{1*\s}) {};
\draw node[color=brwn] at ({7*\s},{2*\s}) {};
\draw node[color=brwn] at ({8*\s},{3*\s}) {};
\foreach \i in {5,...,8}
    \foreach \j in {5,...,8}
        \draw[line width=1.5pt, color=gren] ({\i*\s},{\j*\s}) -- ({(\i+1)*\s},{\j*\s});
\draw node[color=gren] at ({8*\s},{5*\s}) {};
\draw node[color=gren] at ({7*\s},{6*\s}) {};
\draw node[color=gren] at ({6*\s},{7*\s}) {};
\draw node[color=gren] at ({5*\s},{8*\s}) {};
\foreach \i in {1,2,5,6,7,8}
    \foreach \j in {4,...,4}
        \draw[line width=1.5pt, color=orange] ({\i*\s},{\j*\s}) -- ({(\i+1)*\s},{\j*\s});
\draw node[color=orange] at ({1*\s},{4*\s}) {};
\draw node[color=orange] at ({9*\s},{4*\s}) {};
\foreach \i in {1,...,9}
    \foreach \j in {1,...,7}
        \draw[line width=0.5pt, color=black] ({\i*\s},{\j*\s}) -- ({\i*\s},{(\j+1)*\s});
\foreach \i in {1,...,3}
    \foreach \j in {5,...,7}
        \draw[line width=1.5pt, color=purple] ({\i*\s},{\j*\s}) -- ({\i*\s},{(\j+1)*\s});
\draw node[color=purple] at ({1*\s},{5*\s}) {};
\draw node[color=purple] at ({2*\s},{6*\s}) {};
\draw node[color=purple] at ({3*\s},{7*\s}) {};
\foreach \i in {4,...,4}
    \foreach \j in {2,5,6,7}
        \draw[line width=1.5pt, color=orange] ({\i*\s},{\j*\s}) -- ({\i*\s},{(\j+1)*\s});
\draw node[color=orange] at ({4*\s},{2*\s}) {};
\draw node[color=orange] at ({4*\s},{8*\s}) {};
\draw node [label={[label distance=1pt]225:$r$}] at ({4*\s},{4*\s}) {};
\end{tikzpicture}
\caption{An $r$-stacking partition for the grid $P_9\ssq P_8$.}
\label{fig:Grids}
\end{center}
\end{figure}
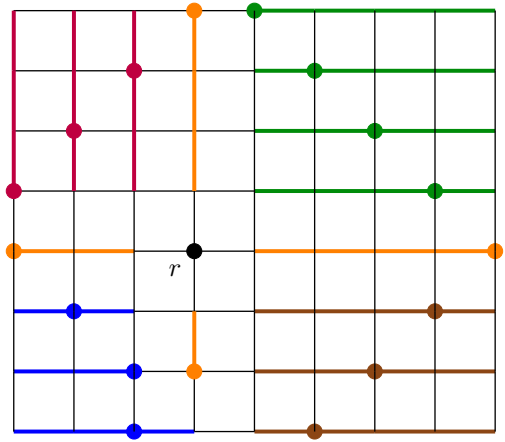

\noindent {\it Proof of Theorem \ref{t:Grids}.}
Let $G$ be the grid $P_m\ssq P_k$ with arbitrary $m,k$.
By symmetry we may assume that $m\ge k$; a grid with $m=k$ is called {\it square}.
We will first handle the case in which the target vertex $r$ is a corner of $G$.
Then we will reduce all other cases to this case.

Suppose that the target vertex $r$ is on a corner of $G$ --- say, $r$ is the bottom left corner --- with coordinates $r=(0,0)$.
Consider the case that $G$ is not square (for example, see the upper right green portion of Figure \ref{fig:Grids}). 
Partition $G$ into paths $R_0 = P_m-r$, $R^* = P_k-r$ emanating from $r$ (orange in Figure \ref{fig:Grids}) and paths $\{R_1,\ldots,R_k\}$ parallel to $P_m$ (green in Figure \ref{fig:Grids}), and let $r_i\in V(R_i)$ have coordinates $r_i=(m-i,i)$ for each $0\le i\le k$, with $r^*=(0,k)$.
Clearly, $\dist(r_i,r) = m = |R_i|$ for each $0\le i\le k$ and $\dist(r^*,r) = k = |R^*|$. 
Moreover, by Fact \ref{f:PathsCyclesSpiders}, each $R_i$ is $r_i$-stackable, and $R^*$ is $r^*$-stackable, and so $G$ is $r$-stackable by Lemma \ref{l:Partition}.

If $G$ is square ($m=k$), we use the same partition as before except that we shorten $R^* = P_{k-1}$ and lengthen $R_k \cong P_m$ (see the lower left blue portion of Figure \ref{fig:Grids}), with the corresponding changes $r_k=(1,k)$ and $r^*=(0,k-1)$, with $\dist(r_k,r) = k+1 = |R_k|$ and $\dist(r^*,r) = k-1 = |R^*|$.
Hence $G$ is $r$-stackable by Lemma \ref{l:Partition}.

For the cases in which $r$ is not a corner vertex of $G$, partition $G$ into two or four smaller grids such that $r$ is a corner of each (i.e. give $r$ the coordinates $(0,0)$; then the intersection of the grid with the four quadrants gives the partition). 
Each grid can be partitioned as above whether the quadrants form squares or not, with a slight modification, if necessary.
It may be that definitions of the orange paths $R^*$ in neighboring quadrants are not consistent, as is the case in the lower half of Figure \ref{fig:Grids}.
Thus we first partition the interior of each quadrant and subsequently define each $R^*$ to be whatever paths remain.
By Lemma \ref{l:Partition}, the grid $P_m\ssq P_k$ is stackable for all $m$ and $k$.
\hfill $\Box$

\subsection{Stackability of small cubes}
\label{ss:Cubes}

We begin by presenting two theorems that are used in the proof of Theorem \ref{t:Cubes}.
Define the {\it generalized Johnson graph} $J(m,k,s)$ to have vertex set $\binom{[m]}{k}$, with edges between sets sharing exactly $s$ elements.
For example, the Petersen graph is isomorphic to both $J(5,2,0)$ and $J(5,3,1)$, the {\it Odd graphs} are the family $\{J(2k+1,k,0)\mid k\ge 2\}$, the {\it Kneser graphs} are the family $\{J(m,k,0)\mid m\ge 2k+1\ge 5\}$, and the original {\it Johnson graphs} are the family $\{J(m,k,k-1)\mid m\ge k+1\ge 2\}$.
Settling a conjecture of \cite{Hurlbert} for Kneser graphs and proving even more, Merino, M\"utze, and Namrata \cite{MerMutNam} proved the following theorem.

\begin{thm}
\cite{MerMutNam}
\label{t:JohnsonHam}
Except for the Petersen graph, for all values with $s<k\le (m+s-1)/2$, the graph $J(m,k,s)$ is Hamiltonian.
\end{thm}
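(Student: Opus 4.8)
The plan is to exploit the recursive self-similarity of the generalized Johnson graphs together with an induction on $m$, using vertex-transitivity so that I need only produce \emph{some} Hamilton cycle rather than worry about a distinguished start vertex. Fix the element $m$ and split $V(J(m,k,s))=\binom{[m]}{k}$ into the sets $\cA$ avoiding $m$ and the sets $\cB$ containing $m$. Identifying each $B\in\cB$ with $B\setminus\{m\}$, the subgraph induced on $\cA$ is a copy of $J(m-1,k,s)$, while the subgraph induced on $\cB$ is a copy of $J(m-1,k-1,s-1)$, since two sets both containing $m$ share exactly $s$ elements iff their restrictions to $[m-1]$ share exactly $s-1$. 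The cross edges join $A\in\cA$ to $B\cup\{m\}\in\cB$ precisely when $|A\cap B|=s$. So for $s\ge 1$ the graph decomposes as two smaller generalized Johnson graphs plus a dense bipartite ``ladder'' between them, and the induction hypothesis has a chance of applying to each part.

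First I would fix clean anchors for the recursion. The Johnson case $s=k-1$, where adjacency means $|U\triangle V|=2$, is exactly the transposition/Gray-code regime: a Hamilton cycle is furnished by a classical combinatorial-generation scheme (e.g. a revolving-door listing of the $k$-subsets), so that arm needs no induction. With both parts of the decomposition Hamiltonian by induction, I would merge their Hamilton cycles by the standard splicing lemma: it suffices to find an edge $A_1A_2$ on the $\cA$-cycle and an edge $B_1B_2$ on the $\cB$-cycle for which $A_1B_1$ and $A_2B_2$ are cross edges; deleting $A_1A_2,B_1B_2$ and inserting the two cross edges fuses the cycles into one. Existence of such a ``parallel'' pair of cross edges should follow from a counting/expansion argument, since each $A$ has many partners $B$ with $|A\cap B|=s$, so the cross-graph is too dense to be incompatible with the two inherited cyclic orders.

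The main obstacle is twofold, and both parts point to the same culprit. First, the admissible window $s<k\le(m+s-1)/2$ is not preserved on the $\cA$-arm: although $J(m-1,k-1,s-1)$ stays inside the window, $J(m-1,k,s)$ can land on the boundary $k=(m+s-1)/2$, where the graph degenerates toward the Kneser/odd-graph extreme and the naive induction bottoms out. Worse, iterating the $\cB$-decomposition drives $s$ down by one each step, so the irreducible core of the recursion is exactly the family of Kneser graphs $J(m-s,k-s,0)$; and when $s=0$ the set $\cB$ of vertices containing $m$ is an \emph{independent} set, so the two-Hamiltonian-parts strategy collapses entirely. In other words, the honest content of the theorem is Hamiltonicity of the Kneser graphs, which is precisely the conjecture being settled and the genuinely hard regime.

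Consequently I expect the real difficulty to be supplying Hamilton cycles at the $s=0$ base, where simple splicing is unavailable and one must instead invoke the deep combinatorial-generation machinery (the flip/kernel constructions underlying the middle-levels theorem) essentially as the true base case rather than as an afterthought. The remaining, more routine obstacle is making the splicing lemma unconditional for $s\ge 1$: one must prove that the two required cross edges can always be chosen disjointly and consistently with the inherited cyclic orders, which amounts to showing the bipartite cross-graph is sufficiently expanding across the \emph{entire} admissible range of $k$, including its tight upper end where the sets are large and intersections of size exactly $s$ become comparatively scarce.
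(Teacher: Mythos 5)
You should first note that the paper contains no proof of this statement to compare against: Theorem~\ref{t:JohnsonHam} is quoted verbatim from Merino, M\"utze, and Namrata \cite{MerMutNam} and is used in the paper purely as a black box (to obtain Fact~\ref{f:KneserHam}, Corollary~\ref{c:JohnsonStack}, and the stacking of level-4 3-cubes). Judged on its own, your proposal is a reduction, not a proof, and you concede the decisive point yourself: iterating the $\mathcal{B}$-decomposition strips $s$ down to $0$, where $\mathcal{B}$ is an independent set and the two-cycle splicing strategy collapses, so the irreducible core is Hamiltonicity of the Kneser graphs $J(m-s,k-s,0)$ --- in particular the sparse regime near $k=(m-1)/2$ including the odd graphs. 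That is exactly the long-open conjecture that \cite{MerMutNam} settles, via the glider/kernel machinery extending the middle-levels theorem; ``invoke the deep combinatorial-generation machinery as the true base case'' is, in substance, assuming the theorem being proved. A sketch whose base case is the entire content of the result has not reduced the difficulty at all.

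Moreover, even the part you call routine has concrete failures. First, the Petersen exception propagates into the induction: to prove $J(6,3,1)$ is Hamiltonian, your $\mathcal{A}$-arm is $J(5,3,1)$, which \emph{is} the Petersen graph and is not Hamiltonian, so the inductive hypothesis is unavailable precisely where the theorem's exceptional case sits (similarly $J(6,2,0)$ has $\mathcal{A}$-arm $J(5,2,0)$); you would need Hamilton paths or ad hoc patches, none supplied. Second, at the boundary $k=(m+s-1)/2$ the $\mathcal{A}$-arm $J(m-1,k,s)$ does not merely ``land on the boundary'' of the admissible window --- it can leave it and degenerate badly: for instance $J(2k,k,0)$ is a disjoint union of edges (each $k$-set adjacent only to its complement), so the induction has no foothold there and every odd-graph-type case must be fed to the hard machinery directly. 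Third, the splicing step requires an edge of one Hamilton cycle and an edge of the other joined by two \emph{parallel} cross edges; mere density of the bipartite cross-graph does not guarantee this relative to two fixed cyclic orders, and you offer only the hope of ``a counting/expansion argument.'' So the proposal is a reasonable outline of a reduction to the Kneser case, but it leaves both the genuinely hard content and the claimed-easy merging step unproven.
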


By using Corollary \ref{c:HamD2Stack} and Theorem \ref{t:JohnsonHam}, we obtain the following result as well.

\begin{cor}
\label{c:JohnsonStack}
Every diameter two Johnson graph is stackable.
\end{cor}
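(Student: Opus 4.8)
The plan is to marry three facts: generalized Johnson graphs are vertex transitive, diameter-two such graphs are Hamiltonian by Theorem~\ref{t:JohnsonHam}, and Corollary~\ref{c:HamD2Stack} turns a Hamiltonian path at an eccentricity-two vertex into an $r$-stacking. First I would record that $J(m,k,s)$ is vertex transitive, since the natural $S_m$-action on $[m]$ permutes $\binom{[m]}{k}$ transitively while preserving all intersection sizes. Two consequences follow at once: by Fact~\ref{f:VTrans} it suffices to find a single target $r$ for which $G$ is $r$-stackable, and, because in a vertex-transitive graph every vertex shares the common eccentricity (here equal to the diameter), every vertex of our diameter-two $G$ has $\ecc(r)=2$. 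Thus Corollary~\ref{c:HamD2Stack} will close the argument as soon as $G$ is known to have a Hamiltonian path, with the Petersen graph set aside and handled by Corollary~\ref{c:Pete}.

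The only real friction is verifying that a diameter-two $J(m,k,s)$ other than Petersen actually satisfies the hypothesis $s<k\le(m+s-1)/2$ of Theorem~\ref{t:JohnsonHam}, and here the key device is the complementation isomorphism $A\mapsto[m]\setminus A$, which gives $J(m,k,s)\cong J(m,m-k,m-2k+s)$. Since edges require $m\ge 2k-s$, the complementary parameter $m-2k+s$ is again nonnegative, so I may pass to whichever of the two presentations satisfies $2k\le m$ (note $s<k$ holds in either presentation precisely when it holds in the other). Working in that presentation, I would split on $s$: if $s\ge 1$ then $k\le m/2\le(m+s-1)/2$ immediately; if $s=0$, the only way to violate $k\le(m-1)/2$ under $2k\le m$ is $2k=m$, i.e.\ $G\cong J(2k,k,0)=K(2k,k)$, which is a perfect matching (disconnected, or $K_2$) and hence not of diameter two. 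In every surviving case the Theorem~\ref{t:JohnsonHam} inequalities hold, so $G$ is Hamiltonian.

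With a Hamiltonian cycle, hence a Hamiltonian path, in hand, I would apply Corollary~\ref{c:HamD2Stack} at any target $r$ (legitimate since $\ecc(r)=2$) to get $r$-stackability, and then Fact~\ref{f:VTrans} upgrades this to full stackability; the Petersen graph, the sole exception to Theorem~\ref{t:JohnsonHam}, is dispatched by Corollary~\ref{c:Pete}. I expect the complementation reduction to be the crux of the argument: without it, genuinely diameter-two graphs such as $J(6,4,2)\cong K(6,2)$ lie outside the range $k\le(m+s-1)/2$ in their stated presentation, so Theorem~\ref{t:JohnsonHam} could not be invoked for them as written, and it is only after switching to the $2k\le m$ presentation that the Hamiltonicity hypothesis becomes available.
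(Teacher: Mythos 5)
Your proposal is correct, and its skeleton is exactly the paper's: the paper offers no written proof of this corollary, deriving it in one line from Theorem~\ref{t:JohnsonHam} plus Corollary~\ref{c:HamD2Stack}, with vertex transitivity upgrading $r$-stackability to stackability just as in the proof of Corollary~\ref{c:KneserStack}. What you add --- and it is a genuine addition, not padding --- is the verification that a diameter-two $J(m,k,s)$ actually lands in the range $s<k\le(m+s-1)/2$ where Theorem~\ref{t:JohnsonHam} applies, via the complementation isomorphism $J(m,k,s)\cong J(m,m-k,m-2k+s)$ and the clean case split on $s=0$ versus $s\ge 1$ (with $J(2k,k,0)$ correctly excluded as a perfect matching). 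Two remarks on how this sits against the paper. First, if ``Johnson graph'' is read in the paper's stated sense, the classical family $J(m,k,k-1)$, then the complementation step is unnecessary: diameter two forces $\min(k,m-k)=2$, and in either case ($k=2$ with $m\ge 4$, or $k=m-2$) the inequality $k\le(m+s-1)/2$ holds directly, and the Petersen graph never arises since it is not of the form $J(m,k,k-1)$. Second, if the corollary is read as covering generalized Johnson graphs of diameter two --- the reading suggested by Open Problem 4, which asks only about diameter at least 3 --- then your reduction is not optional but essential, and your example $J(6,4,2)\cong K(6,2)$ pinpoints precisely why: it is a diameter-two graph outside the theorem's stated parameter range in its given presentation. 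One microscopic point you could make explicit: $s<k$ is automatic for any presentation of a diameter-two graph, since $s=k$ yields an edgeless graph; your parenthetical only establishes that $s<k$ transfers between the two presentations, not that it holds in the first place.
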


Next we write $2^{[n]}$ to denote the set of all subsets of $[n]$.
Then we define a {\it chain} to be a sequence $(S_1,\ldots,S_t)$ of subsets of $[n]$ such that $S_i\subseteq S_{i+1}$ for each $1\le i<t$; it is {\it saturated} if $|S_{i+1}|=|S_i|+1$ for each $1\le i<t$.
A saturated chain in $2^{[n]}$ is {\it symmetric} if $|S_1|+|S_t|=n$.
A family $\cC=\{C_1,\ldots,C_m\}$ of symmetric, saturated chains in $2^{[n]}$ is a {\it symmetric chain decomposition} if $\cC$ partitions $2^{[n]}$.

\begin{thm}
\cite{Kleitman}
\label{t:SCD}
For every $n$, $2^{[n]}$ has a symmetric chain decomposition.
\end{thm}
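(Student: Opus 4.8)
The plan is to prove the theorem by induction on $n$, exploiting the recursive product structure $2^{[n]}\cong 2^{[n-1]}\times\{0,1\}$: every subset $T\subseteq[n]$ is determined by its trace $T\cap[n-1]$ together with the single bit recording whether $n\in T$. The base case is immediate: for $n=0$ the one-element chain $(\emptyset)$ is symmetric since $0+0=0$, and for $n=1$ the chain $(\emptyset,\{1\})$ is symmetric since $0+1=1$.

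For the inductive step I would assume a symmetric chain decomposition $\cC$ of $2^{[n-1]}$ and build one for $2^{[n]}$ by processing each chain $C=(S_1\subset\cdots\subset S_t)\in\cC$ separately. Since $C$ is symmetric in $2^{[n-1]}$ we have $|S_1|+|S_t|=n-1$. From $C$ I would form two chains in $2^{[n]}$: the \emph{long} chain $C^{+}=(S_1\subset\cdots\subset S_t\subset S_t\cup\{n\})$, obtained by appending the top set together with the new element $n$, and the \emph{short} chain $C^{-}=(S_1\cup\{n\}\subset\cdots\subset S_{t-1}\cup\{n\})$, obtained by adjoining $n$ to every set of $C$ except the top one (when $t=1$ this chain is empty and is discarded).

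The three properties to verify are all routine. Saturation of both chains is clear, as consecutive sets differ in exactly one element. For symmetry, the long chain $C^{+}$ runs from $S_1$ to $S_t\cup\{n\}$, so its level sum is $|S_1|+(|S_t|+1)=(n-1)+1=n$; the short chain $C^{-}$ runs from $S_1\cup\{n\}$ to $S_{t-1}\cup\{n\}$, so its level sum is $(|S_1|+1)+(|S_{t-1}|+1)=|S_1|+|S_t|+1=n$, using $|S_{t-1}|=|S_t|-1$. For the partition property, I would argue that each $T\subseteq[n]$ lands in exactly one new chain: if $n\notin T$ then $T=S_i$ for the unique chain of $\cC$ and index $i$ containing it, and $T$ sits on the corresponding $C^{+}$; if $n\in T$ then $T=S_i\cup\{n\}$ for a unique chain and index, landing on $C^{+}$ when $i=t$ and on $C^{-}$ when $i<t$. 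Because $\cC$ partitions $2^{[n-1]}$, the newly formed chains partition $2^{[n]}$.

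The only point demanding care is the short-chain edge case $t=1$, which forces $n$ odd and $|S_1|=(n-1)/2$: there $C^{-}$ degenerates to the empty chain and is discarded, and one must confirm that $S_1\cup\{n\}$ is not orphaned — which holds precisely because it becomes the top element of $C^{+}=(S_1\subset S_1\cup\{n\})$. I do not anticipate any genuine obstacle beyond this bookkeeping; the substance of the proof lies entirely in identifying the correct recombination of the ``with-$n$'' and ``without-$n$'' copies of each chain, after which saturation, symmetry, and the partition count all follow mechanically.
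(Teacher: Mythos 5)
Your construction is correct in every detail: the long chain $C^{+}$ and short chain $C^{-}$ are both saturated, the level-sum computations $|S_1|+(|S_t|+1)=n$ and $(|S_1|+1)+(|S_{t-1}|+1)=n$ are right, the case analysis on whether $n\in T$ shows each subset of $[n]$ lies in exactly one new chain, and you correctly handle the degenerate $t=1$ case (including the observation that it forces $n$ odd with $|S_1|=(n-1)/2$, and that $S_1\cup\{n\}$ is absorbed as the top of $C^{+}$). One small point worth noting: the paper itself offers no proof of this statement at all --- it quotes the theorem from the literature with a citation to Kleitman and uses only its consequence (Corollary \ref{c:Inject}) --- so there is no in-paper argument to compare against. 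What you have written is the classical inductive construction usually attributed to de Bruijn, Tengbergen, and Kruyswijk, which is the standard self-contained proof of exactly this statement; supplying it makes the paper's use of Theorem \ref{t:SCD} independent of the reference, at the modest cost of a page of bookkeeping that the authors chose to outsource. No gaps.
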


Theorem \ref{t:SCD} quickly gives rise to the following corollary.
Define $\binom{[n]}{\ge k}$ (resp. $\binom{[n]}{>k}$) to be the set of all subsets of $[n]$ of size at least (resp. greater than) $k$. 
Given a symmetric chain decomposition $\cC$, define the function $\f:\binom{[n]}{>n/2} \rar \binom{n}{\ge\lfloor n/2\rfloor}$ as follows.
For $X\in \binom{[n]}{>n/2}$ we find $C\in\cC$ such that $X\in C$.
Then find $Y\in C$ such that $|Y|=|X|-1$, and set $\f(X)=Y$.

\begin{cor}
\label{c:Inject}
For every $n$ the function $\f: \binom{[n]}{>n/2}\rar \binom{[n]}{\ge\lfloor n/2\rfloor}$ is an injection such that $|\f(X)|=|X|-1$ for all $X\in \binom{[n]}{>n/2}$.
\end{cor}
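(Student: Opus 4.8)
The plan is to lean on exactly two features of a symmetric chain decomposition: that a single symmetric, saturated chain hits a contiguous, symmetric band of size-levels, and that the chains partition $2^{[n]}$. First I would record the size profile of a single chain $C=(S_1,\ldots,S_t)\in\cC$. Because $C$ is saturated its members have consecutive sizes $|S_1|,|S_1|+1,\ldots,|S_t|$, and because it is symmetric $|S_1|+|S_t|=n$. Hence the sizes occurring in $C$ form exactly the interval from $a:=|S_1|$ to $n-a:=|S_t|$, where $a\le n/2\le n-a$, and $C$ contains precisely one set of each such size. This single observation drives everything that follows.

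With it in hand, well-definedness of $\f$ is immediate. Given $X\in\binom{[n]}{>n/2}$, let $C$ be the chain of $\cC$ containing $X$ and let $a$ be its smallest size. Since $a\le n/2<|X|$ we get $|X|-1\ge a$, so $C$ contains a (unique) set $Y$ of size $|X|-1$; this $Y$ is $\f(X)$, which gives $|\f(X)|=|X|-1$ directly. For the codomain I would check that $|X|>n/2$ forces $|X|-1\ge\lfloor n/2\rfloor$, handling $n$ even and $n$ odd separately, so that indeed $\f(X)\in\binom{[n]}{\ge\lfloor n/2\rfloor}$.

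Injectivity is where the partition hypothesis does the work. Suppose $\f(X)=\f(X')=Y$. By construction $Y$ lies in the chain containing $X$ and also in the chain containing $X'$; since $\cC$ partitions $2^{[n]}$, the chain containing $Y$ is unique, so $X$ and $X'$ lie in one common chain $C$. But $|X|=|Y|+1=|X'|$, and a saturated chain has exactly one set of each size, so $X=X'$. The only point that genuinely needs care --- the ``main obstacle,'' such as it is --- is the bound $a\le n/2$ in the well-definedness step, which is precisely the symmetry condition $|S_1|+|S_t|=n$ in disguise; once that is secured, both the size statement and injectivity follow formally, with no further estimation required.
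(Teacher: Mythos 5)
Your proposal is correct and follows essentially the same route as the paper's own proof: both obtain $Y$ from the unique chain containing $X$, using saturation plus the symmetry condition $|S_1|+|S_t|=n$ to guarantee a set of size $|X|-1\ge\lfloor n/2\rfloor$ exists in that chain, and both derive injectivity from the fact that $\cC$ partitions $2^{[n]}$ so that $\f(X)$ determines the chain and hence $X$. You merely spell out more explicitly the size-interval profile of a symmetric saturated chain and the even/odd check, which the paper leaves implicit.
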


\begin{proof}
Because of Theorem \ref{t:SCD} such a symmetric chain decomposition $\cC$ of $2^{[n]}$ exists.
Because $\cC$ partitions $2^{[n]}$, for every $X\in \binom{[n]}{>n/2}$ the choice of $C\in\cC$, such that $X\in C$, exists and is unique.
Because $C$ is saturated and $|X|>n/2$, the choice of $Y\in C$, such that $|Y|=|X|-1\ge \lfloor n/2\rfloor$, exists and is unique.
Hence $\f$ is well-defined and is injective.
\end{proof}

Using the prior-mentioned identification between $2^{[d]}$ and $V(Q^d)$, we apply $\f$ to the appropriately associated vertices as well.
Recall in this association that each $u=(u_1,\ldots,u_d)\in V(Q^d)$ corresponds to the subset $U$ defined by $i\in U$ if and only if $u_i=1$.
From this we define the {\it weight} of $u$ by $\wt(u)=|U|$; i.e. $\wt(u)=\dist_{Q^d}(u,\bz)$.
Additionally, the product $Q^d = Q^p\ssq Q^q$ is a partitioning of $Q^d$ into $q$-cubes, where each vertex in $Q^p$ (subset of $[p]$) is replaced by a $q$-cube.
Hence we can label each such $q$-cube $A$ (i.e. not all $q$-cubes of $Q^d$, just those arising from this Cartesian-product-generated partition) uniquely and distinctly by its corresponding subset $S$ of $[p]$, and define the {\it level} of $A$ as $\lev(A)=|S|$.
Of course this value equals $\min_{a\in A}\wt(a)$.
We call the unique vertex of minimum (resp. maximum) weight in $A$ its {\it bottom} (resp. {\it top}) vertex.

\begin{lem}
\label{l:LevelkCubes}
Let $k\le 3$, $l\le 2^k$, and $d\ge l+k$, and suppose that if $k=3$ then $l\not=4$.
Then every level-$l$ k-cube in $Q^d$ is $\bz$-stackable.
In particular, $Q^d$ is stackable for all $d\le 6$.
\end{lem}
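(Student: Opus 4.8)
The plan is to prove the displayed bound by induction on $k$, and for a level-$l$ $k$-cube $A\sse Q^d$ to split into two regimes according to whether $l$ is large or small relative to $2^k-k$. Throughout I work inside $(Q^d,\b1)$ and invoke Lemma~\ref{l:Partition}, so it suffices to route all of $A$'s cups to $\bz$; since the ambient configuration is $\b1$, the target $\bz$ always carries at least its own cup, so any pile resting on a vertex at distance equal to its size may be dispatched to $\bz$ by a single legal move. The base $k\le 1$ is immediate: a level-$l$ $0$-cube is either $\bz$ (nothing to do) or a lone cup at distance $1$, and a level-$l$ edge ($l\le 2$) is consolidated onto whichever endpoint has weight $2$ and then sent to $\bz$.

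For the inductive step $k\in\{2,3\}$, first consider the large regime $2^k-k\le l\le 2^k$. Here the weights attained in $A$ fill the interval $[l,l+k]$, which contains $2^k$, so $A$ has a vertex $s$ with $\wt(s)=2^k$. The key sub-claim is that $Q^k$ with the all-ones configuration is stackable onto any one of its vertices; granting this, I consolidate all $2^k$ cups of $A$ onto $s$ (using only moves internal to $A$, which are isometric in $Q^d$) and then play $s\mapsto\bz$, legal because $\dist_{Q^d}(s,\bz)=\wt(s)=2^k$ equals the cup-count on $s$ and $d\ge l+k\ge 2^k$. As $Q^k$ is vertex-transitive, it is enough to verify the sub-claim for a single target vertex; for $k=2$ this is a short exercise, and $k=3$ is the crux, discussed below.

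Next consider the small regime $0\le l\le 2^{k-1}-1$. Splitting $A=Q^{k}=Q^{k-1}\ssq Q^1$ along one free coordinate writes $A$ as two vertex-disjoint $(k-1)$-cubes, one at level $l$ and one at level $l+1$; both levels are at most $2^{k-1}$, so by the induction hypothesis each is $\bz$-stackable, and since the two sub-cubes are disjoint and each routing touches only its own vertices and $\bz$, performing one routing and then the other sends all of $A$'s cups to $\bz$. The two regimes dovetail exactly: the values missed by both are those with $2^{k-1}\le l\le 2^k-k-1$, an empty range for $k\in\{1,2\}$ and the single value $l=4$ when $k=3$ — precisely the excluded case, because a level-$4$ $3$-cube has maximum weight $4+3=7<8$ (killing the large-$l$ move) while its upper half sits at level $5>4=2^{2}$ (pushing the split outside the inductive range).

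The main obstacle is the sub-claim for $k=3$: stacking the unit $3$-cube onto a single vertex. The naive route — consolidate each of the two parallel $2$-faces and merge — fails, since merging two piles of $4$ would need a move of length $4$, exceeding $\diam(Q^3)=3$. The fix I would use is an asymmetric split: with a handful of unit moves (and one length-two move) I funnel the lower $2$-face together with one further vertex onto the target corner until it holds $5$ cups, pile the remaining three cups onto the antipodal corner, and finish with the single length-$3$ move of those $3$ cups across the main diagonal; vertex-transitivity then transports this routine to any prescribed target vertex. Finally, the ``in particular'' statement follows by taking $k=2$: partitioning $Q^d$ as $Q^{d-2}\ssq Q^2$ for $d\le 6$ yields $2$-cubes at levels $0,1,\dots,d-2\le 4=2^2$, each $\bz$-stackable by the case just proved, so Lemma~\ref{l:Partition} gives $\bz$-stackability and Fact~\ref{f:VTrans} (cubes are vertex-transitive) upgrades this to stackability.
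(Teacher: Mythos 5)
Correct, and essentially the paper's own approach: both arguments split a low-level $k$-cube into two $(k-1)$-cubes at levels $l$ and $l+1$, handle the complementary regime $2^k-k\le l\le 2^k$ by consolidating all $2^k$ cups onto a weight-$2^k$ vertex before one long move to $\bz$, and derive the ``in particular'' clause from the same partition $Q^d=Q^{d-2}\ssq Q^2$ into $2$-cubes of level at most $4$. The only divergences are details: you certify the needed stackability of $Q^3$ by an explicit $5{+}3$ routine across the main diagonal (which checks out), where the paper instead gets it by partitioning $Q^3=Q^2\ssq Q^1$ into $1$-cubes of level at most $2$; and your base-case recipe for a level-$0$ edge (``consolidate onto the weight-$2$ endpoint'') is a harmless slip, since that edge has no weight-$2$ endpoint but contains $\bz$ itself and is finished by a single unit move.
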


\begin{proof}
As noted in Fact \ref{f:VTrans}, $Q^d$ is stackable if and only if it is $\bz$-stackable.
When $k=0$ the statement is trivially true.

When $k=1$ we have $Q^k=K_2$, which we know to be stackable.
Thus we can stack it immediately onto $\bz$ if $l=0$ and stack it onto its weight-2 vertex before sending the 2 cups to $\bz$ if $l\in\{1,2\}$.
This implies that $Q^2$ and $Q^3$ are stackable since such 1-cubes form a stacking partition of them; that is, for $d\in\{2,3\}$ we have $Q^d=Q^{d-1}\ssq Q^1$, with each $Q^1$ having level at most 2.

When $k=2$ we have that any level-0 2-cube is $\bz$-stackable.
Also, any level-1 2-cube consists of two 1-cubes that each contain weight-2 vertices, and so two cups can be stacked onto each of those before being sent to $\bz$.
For $2\le l\le 4$ we can stack any level-$l$ 2-cube onto one of its weight-4 vertices before sending the 4 cups to $\bz$.
This implies that $Q^d$ is stackable for $d\le 6$ since we have $Q^d=Q^{d-2}\ssq Q^2$, with each $Q^2$ having level at most 4.

When $k=3$ we have that any 3-cube of level at most 3 is $\bz$-stackable because it can be partitioned into two 2-cubes of level at most 4.
If $5\le l\le 8$ then any level-$l$ 3-cube can be stacked onto one of its weight-8 vertices before sending all its cups to $\bz$.
This finishes the proof.
\end{proof}

\begin{figure}[ht]
\begin{center}
\begin{tikzpicture}[scale=.4]
\tikzstyle{every node}=[draw,circle,fill=black,minimum size=1pt,inner sep=2pt]
\def \r {-8}
\def \s {2.5}
\def \u {-2}
\def \v {2.5}
\def \x {1}
\def \y {2.5}
\def \w {4}
\def \z {2.5}
\foreach \j in {0,...,1}
    \foreach \k in {0,...,1}
        \foreach \h in {0,...,1}
            \draw[line width=1.0pt, color=black] ({0*\r+\j*\u+\k*\x+\h*\w},{0*\s+\j*\v+\k*\y+\h*\z}) -- ({\r+\j*\u+\k*\x+\h*\w},{\s+\j*\v+\k*\y+\h*\z});
\foreach \i in {0,...,1}
    \foreach \k in {0,...,1}
        \foreach \h in {0,...,1}
            \draw[line width=1.0pt, color=black] ({\i*\r+0*\u+\k*\x+\h*\w},{\i*\s+0*\v+\k*\y+\h*\z}) -- ({\i*\r+1*\u+\k*\x+\h*\w},{\i*\s+1*\v+\k*\y+\h*\z});
\foreach \i in {0,...,1}
    \foreach \j in {0,...,1}
        \foreach \h in {0,...,1}
            \draw[line width=1.0pt, color=black] ({\i*\r+\j*\u+0*\x+\h*\w},{\i*\s+\j*\v+0*\y+\h*\z}) -- ({\i*\r+\j*\u+\x+\h*\w},{\i*\s+\j*\v+\y+\h*\z});
\foreach \i in {0,...,1}
    \foreach \j in {0,...,1}
        \foreach \k in {0,...,1}
            \draw[line width=1.0pt, color=black] ({\i*\r+\j*\u+\k*\x+0*\w},{\i*\s+\j*\v+\k*\y+0*\z}) -- ({\i*\r+\j*\u+\k*\x+\w},{\i*\s+\j*\v+\k*\y+\z});
\foreach \i in {0,...,1}
    \foreach \j in {0,...,1}
        \foreach \k in {0,...,1}
            \foreach \h in {0,...,1}
                \draw node at ({\i*\r+\j*\u+\k*\x+\h*\w},{\i*\s+\j*\v+\k*\y+\h*\z}) {};
\draw node [label=above: {$q$}] at ({\u+\r+\x+\w},{\v+\s+\y+\z}) {};
\draw node [label=above: {$s$}] at ({\u+\x+\w},{\v+\y+\z}) {};
\draw node [label=right: {$t$}] at ({0},{0}) {};
\end{tikzpicture}
$\qquad$ 
\begin{tikzpicture}[scale=.4]
\tikzstyle{every node}=[fill=white,minimum size=1pt,inner sep=2pt]
\def \s {2.5}
\draw node [label=center: {\underline{level}}] at (0,{5*\s}) {};
\foreach \i in {0,...,4}
    \pgfmathsetmacro{\l}{int(\i+3)}
    \draw node [label=center: {$\l$}] at (0,{\i*\s}) {};
\end{tikzpicture}
$\qquad$
\begin{tikzpicture}[scale=.4]
\tikzstyle{every node}=[draw,circle,fill=black,minimum size=1pt,inner sep=2pt]
\def \r {-8}
\def \s {2.5}
\def \u {-2}
\def \v {2.5}
\def \x {1}
\def \y {2.5}
\def \w {4}
\def \z {2.5}
    \draw[line width=1.5pt, color=gren] ({0},{0}) -- ({\r},{\s});
    \draw[line width=1.5pt, color=gren] ({0},{0}) -- ({\u},{\v});
    \draw[line width=1.5pt, color=blue] ({\w},{\z}) -- ({\u+\w},{\v+\z});
    \draw[line width=1.5pt, color=blue] ({\x+\w},{\y+\z}) -- ({\u+\x+\w},{\v+\y+\z});
    \draw[line width=1.5pt, color=red] ({\r+\w},{\s+\z}) -- ({\r+\u+\w},{\s+\v+\z});
    \draw[line width=1.5pt, color=red] ({\r+\x+\w},{\s+\y+\z}) -- ({\r+\u+\x+\w},{\s+\v+\y+\z});
    \draw[line width=1.5pt, color=blue] ({\u+\w},{\v+\z}) -- ({\u+\x+\w},{\v+\y+\z});
    \draw[line width=1.5pt, color=red] ({\r+\u},{\s+\v}) -- ({\r+\u+\x},{\s+\v+\y});
    \draw[line width=1.5pt, color=red] ({\r+\u+\w},{\s+\v+\z}) -- ({\r+\u+\x+\w},{\s+\v+\y+\z});
    \draw[line width=1.5pt, color=blue] ({\x},{\y}) -- ({\x+\w},{\y+\z});
    \draw[line width=1.5pt, color=red] ({\r+\x},{\s+\y}) -- ({\r+\x+\w},{\s+\y+\z});
    \draw[line width=1.5pt, color=blue] ({\u+\x},{\v+\y}) -- ({\u+\x+\w},{\v+\y+\z});
    \draw[line width=1.5pt, color=red] ({\u+\r+\x},{\v+\s+\y}) -- ({\u+\r+\x+\w},{\v+\s+\y+\z});
\foreach \i in {0,...,1}
    \foreach \j in {0,...,1}
        \foreach \k in {0,...,1}
            \foreach \h in {0,...,1}
                \draw node at ({\i*\r+\j*\u+\k*\x+\h*\w},{\i*\s+\j*\v+\k*\y+\h*\z}) {};
\draw node [label=above: {\color{red} $q$}] at ({\u+\r+\x+\w},{\v+\s+\y+\z}) {};
\draw node [label=above: {\color{blue} $s$}] at ({\u+\x+\w},{\v+\y+\z}) {};
\draw node [label=right: {\color{gren} $t$}] at ({0},{0}) {};
\end{tikzpicture}
\caption{A level-3 4-cube $T$, left, with the $\bz$-stacking partition into spiders $\{{\color{red} (H_1,D_1)},{\color{blue} (H_2,D_2)},{\color{gren} (H_3,D_3)}\}$, right.}
\label{fig:Tesseract}
\end{center}
\end{figure}

\begin{lem}
\label{l:7Cube}
For every $l\le 3$ and $d\ge l+4$, any level-$l$ 4-cube in $Q^d$ is $\bz$-stackable.
For every $d\ge 8$, the set of all level-4 3-cubes in $Q^d$ is $\bz$-stackable.
\end{lem}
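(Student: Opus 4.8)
The plan is to prove the two statements separately, in both cases certifying $\bz$-stackability through Lemma \ref{l:Partition}: it suffices to partition the relevant cups into pieces, each of which can be stacked onto a single vertex $s$ whose weight (that is, $\dist(s,\bz)$) equals the number of cups in that piece, after which those cups move to $\bz$ in one step.

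For the first statement I would split on the value of $l$. When $l\le 2$, I write the level-$l$ $4$-cube as $Q^1\ssq Q^3$, which partitions it into a level-$l$ $3$-cube and a level-$(l+1)$ $3$-cube; since $l+1\le 3$, both are $\bz$-stackable by Lemma \ref{l:LevelkCubes} (whose only excluded small case is level $4$ when $k=3$), and stacking each separately gives the result. The case $l=3$ is precisely the one Lemma \ref{l:LevelkCubes} cannot reach, because the level-$3$ $4$-cube splits into $3$-cubes of levels $3$ and $4$. Here I would instead use the explicit partition of Figure \ref{fig:Tesseract}, which breaks the $16$ vertices into three spiders of sizes $3$, $6$, and $7$ rooted respectively at the bottom vertex (weight $3$), a weight-$6$ vertex, and the top vertex (weight $7$). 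Each spider is stackable onto its root by Fact \ref{f:PathsCyclesSpiders}, in each case the root's weight equals the spider's size, and distance preservation holds because every leg is a geodesic of $Q^d$; Lemma \ref{l:Partition} then finishes statement~1.

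The second statement is the substantial one, since a single level-$4$ $3$-cube has weights only in $\{4,5,6,7\}$ and no weight-$8$ vertex, so its eight cups cannot be launched from one vertex; moreover the level-$4$ $3$-cubes are pairwise non-adjacent in $Q^d$, so cups can only be shuttled between them along $Q^d$-geodesics through empty vertices, and moving a pile requires its size to equal the distance travelled and its target to be already occupied. My approach is to fix, inside each $3$-cube, a symmetric chain decomposition of $2^{[3]}$ (Theorem \ref{t:SCD}): the length-$4$ chain (bottom $\emptyset$ to top) is gathered onto the bottom vertex (weight $4$) by a short legal move sequence and discharged to $\bz$, while the two length-$2$ chains are each gathered into a pile of two cups on a weight-$6$ vertex. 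Insisting on weight-$6$ landing spots is the key: two such piles, placed on the same-type weight-$6$ vertices of two $3$-cubes whose base sets differ by a single swap, lie at distance exactly $2$ in $Q^d$ (their symmetric difference has size $|S\triangle S'|=2$), so a pile of size $2$ may hop between them. Using Hamiltonicity of the generalized Johnson graph $J(d-3,4,3)$ on the set of level-$4$ $3$-cubes (Theorem \ref{t:JohnsonHam}, valid for all $d\ge 8$, with $J(5,4,3)\cong K_5$ covering the boundary case $d=8$), I would order the cubes so that consecutive ones are at distance $2$, then combine the $2$-piles in consecutive triples: two outer piles hop onto the middle one to build a pile of $6$ on a weight-$6$ vertex, which discharges to $\bz$.

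The hard part is the bookkeeping forced by the distance-$2$ constraint: since only piles of size exactly $2$ can hop between cubes, every weight-$6$ discharge consumes precisely three $2$-piles, so the construction wants the count $\binom{d-3}{4}$ of cubes (equivalently, the supply of each pile type) to split into triples. I expect the genuine obstacle to be handling the residue of $\binom{d-3}{4}$ modulo $3$ together with the analogous parity constraints: when the count is not a multiple of $3$ one must combine a single cube's two weight-$6$ piles with one neighbor's pile (the two piles of one cube are themselves at distance $2$, since their symmetric difference is $\{c_1,c_2\}$), reroute the leftover along the Hamiltonian cycle, and possibly invoke the injection of Corollary \ref{c:Inject} to absorb the few remaining cups, all while checking that every intermediate move is legal. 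Once each cup is assigned to a pile that lands on a vertex of matching weight, Lemma \ref{l:Partition} certifies that the whole collection of level-$4$ $3$-cubes is $\bz$-stackable.
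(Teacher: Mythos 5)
Your treatment of the first statement is correct and coincides with the paper's: for $l\le 2$ split the $4$-cube as $Q^1\ssq Q^3$ into two $3$-cubes of levels at most $3$ and invoke Lemma \ref{l:LevelkCubes}, and for $l=3$ use the explicit $3+6+7$ spider partition of Figure \ref{fig:Tesseract} rooted at vertices of weights $3$, $6$, and $7$. (You even handle the boundary case $d=8$ more carefully than the paper does, noting that $J(5,4,3)\cong K_5$ is Hamiltonian even though the stated hypothesis $k\le(m+s-1)/2$ of Theorem \ref{t:JohnsonHam} fails there --- a legitimate point.)

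For the second statement, however, your proposal diverges from the paper and is left with a genuine gap at exactly the step you flag as ``the genuine obstacle.'' Your mechanism --- split each level-$4$ $3$-cube via a symmetric chain decomposition into a $4$-pile on the weight-$4$ bottom (legal, and dischargeable to $\bz$) plus two $2$-piles on weight-$6$ vertices, then merge $2$-piles in triples via distance-$2$ hops along the Hamilton cycle --- is sound when the $2\binom{d-3}{4}$ piles split into threes, but that divisibility fails often (e.g., $d=8$ gives $10$ piles), and the leftover is not routinely absorbable: a stranded $2$-pile on a weight-$6$ vertex has no legal route to $\bz$, since its only distance-$2$ occupied targets are other $2$-piles (all consumed), and merging it with a $4$-pile strands $6$ cups on a weight-$4$ vertex, which is dead (the pile size no longer matches any reachable occupied vertex's distance, and $\bz$ sits at distance $4$). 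Your suggested patches (rerouting along the cycle, invoking Corollary \ref{c:Inject}) are gestures, not arguments --- Corollary \ref{c:Inject} is used in the paper only for the level-$9$ through $12$ regime of Lemma \ref{l:ABC3Cubes}, a different problem. The paper avoids the divisibility issue entirely: it partitions the Hamilton cycle of $J(d-3,4,3)$ into \emph{pairs} of cubes, with a single \emph{triple} when $\binom{d-3}{4}$ is odd (any count at least $2$ admits such a partition, so no residue analysis is needed), and for each pair $(U,V)$ it gives an explicit stacking partition of the $16$ cups into three spiders rooted at weight-$5$, $6$, and $5$ vertices (Figure \ref{fig:11CubesP2}), using the already-emptied top vertex $x$ of the level-$5$ cube $X=U\cup V$ as a way-station on the geodesic; the triple case is handled analogously with roots of weights $6+5+6+7=24$ (Figure \ref{fig:11CubesP3}). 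To complete your route you would have to produce comparably explicit configurations for each residue class of leftover piles, which is precisely the ad-hoc work your proposal defers.
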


\begin{figure}[ht]
\begin{center}
\begin{tikzpicture}[scale=.4]
\tikzstyle{every node}=[draw,circle,fill=black,minimum size=1pt,inner sep=2pt]
\def \r {-8}
\def \s {0}
\def \u {-3}
\def \v {2.5}
\def \x {0}
\def \y {2.5}
\def \w {3}
\def \z {2.5}
\foreach \i in {0,...,1}
    \foreach \k in {0,...,1}
        \foreach \h in {0,...,1}
            \draw[line width=1.0pt, color=black] ({\i*\r+0*\u+\k*\x+\h*\w},{\i*\s+0*\v+\k*\y+\h*\z}) -- ({\i*\r+1*\u+\k*\x+\h*\w},{\i*\s+1*\v+\k*\y+\h*\z});
\foreach \i in {0,...,1}
    \foreach \j in {0,...,1}
        \foreach \h in {0,...,1}
            \draw[line width=1.0pt, color=black] ({\i*\r+\j*\u+0*\x+\h*\w},{\i*\s+\j*\v+0*\y+\h*\z}) -- ({\i*\r+\j*\u+\x+\h*\w},{\i*\s+\j*\v+\y+\h*\z});
\foreach \i in {0,...,1}
    \foreach \j in {0,...,1}
        \foreach \k in {0,...,1}
            \draw[line width=1.0pt, color=black] ({\i*\r+\j*\u+\k*\x+0*\w},{\i*\s+\j*\v+\k*\y+0*\z}) -- ({\i*\r+\j*\u+\k*\x+\w},{\i*\s+\j*\v+\k*\y+\z});
\foreach \i in {0,...,1}
    \foreach \j in {0,...,1}
        \foreach \k in {0,...,1}
            \foreach \h in {0,...,1}
                \draw node at ({\i*\r+\j*\u+\k*\x+\h*\w},{\i*\s+\j*\v+\k*\y+\h*\z}) {};
\draw node (U) [label=left: {$u$}] at ({\u+\r+\x+\w},{\v+\s+\y+\z}) {};
\draw node (V) [label=right: {$v$}] at ({\u+\x+\w},{\v+\y+\z}) {};
\draw node (X) [label=above: {$x$}] at ({\u+\r/2+\x+\w},{\v+\y+\y+\z}) {};
\draw node [label=below: {\color{gren} $0$}] at ({\u+\r/2+\x+\w},{\v+\y+\y+\z}) {};
\draw[line width=1.0pt, color=black] (U) -- (X) -- (V);
\end{tikzpicture}
$\qquad$ 
\begin{tikzpicture}[scale=.4]
\tikzstyle{every node}=[fill=white,minimum size=1pt,inner sep=2pt]
\def \s {2.5}
\draw node [label=center: {\underline{level}}] at (0,{5*\s}) {};
\foreach \i in {0,...,4}
    \pgfmathsetmacro{\l}{int(\i+4)}
    \draw node [label=center: {$\l$}] at (0,{\i*\s}) {};
\end{tikzpicture}
$\qquad$
\begin{tikzpicture}[scale=.4]
\tikzstyle{every node}=[draw,circle,fill=black,minimum size=1pt,inner sep=2pt]
\def \r {-8}
\def \s {0}
\def \u {-3}
\def \v {2.5}
\def \x {0}
\def \y {2.5}
\def \w {3}
\def \z {2.5}
    \draw[line width=1.5pt, color=gren] ({\r},{\s}) -- ({\r+\u},{\s+\v});
    \draw[line width=1.5pt, color=gren] ({\r},{\s}) -- ({\r+\x},{\s+\y});
    \draw[line width=1.5pt, color=gren] ({\r+\u},{\s+\v}) -- ({\r+\u+\x},{\s+\v+\y});
    \draw[line width=1.5pt, color=gren] ({\r+\u},{\s+\v}) -- ({\r+\u+\w},{\s+\v+\z});
    \draw[line width=1.5pt, color=red] ({\r+\w},{\s+\z}) -- ({\r+\x+\w},{\s+\y+\z});
    \draw[line width=1.5pt, color=red] ({\r+\x+\w},{\s+\y+\z}) -- ({\r+\u+\x+\w},{\s+\v+\y+\z});
    \draw[line width=1.5pt, color=red] ({\w},{\z}) -- ({\x+\w},{\y+\z});
    \draw[line width=1.5pt, color=red] ({\x+\w},{\y+\z}) -- ({\u+\x+\w},{\v+\y+\z});
    \draw[line width=1.5pt, color=blue] ({0},{0}) -- ({\u},{\v});
    \draw[line width=1.5pt, color=blue] ({0},{0}) -- ({\x},{\y});
    \draw[line width=1.5pt, color=blue] ({\u},{\v}) -- ({\u+\x},{\v+\y});
    \draw[line width=1.5pt, color=blue] ({\u},{\v}) -- ({\u+\w},{\v+\z});
\foreach \i in {0,...,1}
    \foreach \j in {0,...,1}
        \foreach \k in {0,...,1}
            \foreach \h in {0,...,1}
                \draw node at ({\i*\r+\j*\u+\k*\x+\h*\w},{\i*\s+\j*\v+\k*\y+\h*\z}) {};
\draw node [label=left: {\color{gren} $a$}] at ({\r+\u},{\s+\v}) {};
\draw node [label=above: {\color{red} $b$}] at ({\r+\x+\w},{\s+\y+\z}) {};
\draw node [label=below: {\color{blue} $c$}] at ({\u},{\v}) {};
\draw node (U) at ({\u+\r+\x+\w},{\v+\s+\y+\z}) {};
\draw node (V) [label=right: {$v$}] at ({\u+\x+\w},{\v+\y+\z}) {};
\draw node (X) [label=above: {$x$}] at ({\u+\r/2+\x+\w},{\v+\y+\y+\z}) {};
\draw[line width=1.5pt, color=red] (U) -- (X) -- (V);
\end{tikzpicture}
\caption{Coordinating the stacking of two ``adjacent'' level-4 3-cubes in $Q^d$ for $d\ge 8$.}
\label{fig:11CubesP2}
\end{center}
\end{figure}
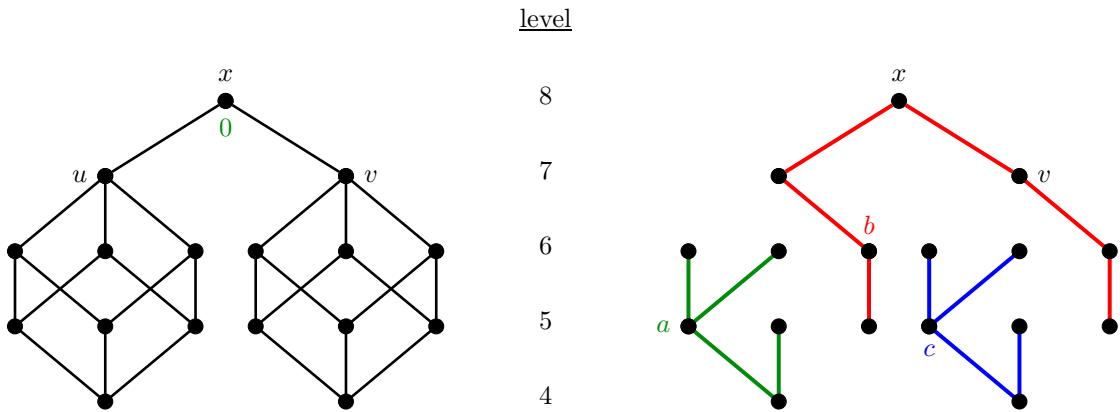

\begin{proof}
Let $A$ be a level-$l$ 4-cube.
If $l\le 2$ then it can be partitioned into two 3-cubes of levels at most 3, so the result follows from Lemma \ref{l:LevelkCubes}.
If $l=3$ then Figure \ref{fig:Tesseract} shows a $\bz$-stacking partition.
If $l=4$ then we partition it into two 3-cubes of levels 4 and 5.
We stack each level-5 3-cube to its top vertex, leaving the set of all level-4 3-cubes in $Q^d$ to stack.
These we now consider.

These cubes can be labeled by their lowest vertex; the set of such labels is $\binom{[d-3]}{4} = V(J(d-3,4,3))$.
By Theorem \ref{t:JohnsonHam} we let $H$ be a Hamilton cycle in $J(d-3,4,3)$, which we partition into paths of length 1, along with a single path of length 2 if $\binom{d-3}{4}$ is odd.
For each path $P$ in this partition we show below how the union of cubes associated with the vertices of $P$ is $r$-stackable.

When $P$ consists of two adjacent vertices $U$ and $V$ we set $X=U\cup V$; recall that $|U|=|V|=4$ and $|U\cap V|=3$, and so $|X|=5$; i.e. $X$ corresponds to a level-5 3-cube.
Hence the top vertices $u$ and $v$ of the cubes labeled $U$ and $V$ are each adjacent in $Q^d$ to the top vertex $x$ of the cube labeled $X$ (see the left side of Figure \ref{fig:11CubesP2}).
In this case, the configuration of cups on this structure $S$ has one cup on every vertex except no cup on vertex $x$, since all the cups in the 3-cube labeled $X$ were already stacked onto its top vertex.
The right side of Figure \ref{fig:11CubesP2} shows in color the stacking partition of $S$ that places the appropriate number of cups on the appropriate levels of vertices $a$, $b$, and $c$ (which are roots of spiders), to then stack them all on $r$.
The point of vertex $x$ is for it to be used to move three cups from $v$ to $b$.

\begin{figure}[ht]
\begin{center}
\begin{tikzpicture}[scale=.4]
\tikzstyle{every node}=[draw,circle,fill=black,minimum size=1pt,inner sep=2pt]
\def \r {-8}
\def \R {8}
\def \s {0}
\def \u {-3}
\def \v {2.5}
\def \x {0}
\def \y {2.5}
\def \w {3}
\def \z {2.5}
\foreach \i in {0,...,1}
    \foreach \k in {0,...,1}
        \foreach \h in {0,...,1}
            \draw[line width=0.5pt, color=black] ({\i*\r+0*\u+\k*\x+\h*\w},{\i*\s+0*\v+\k*\y+\h*\z}) -- ({\i*\r+1*\u+\k*\x+\h*\w},{\i*\s+1*\v+\k*\y+\h*\z});
\foreach \i in {0,...,1}
    \foreach \k in {0,...,1}
        \foreach \h in {0,...,1}
            \draw[line width=0.5pt, color=black] ({\i*\R+0*\u+\k*\x+\h*\w},{\i*\s+0*\v+\k*\y+\h*\z}) -- ({\i*\R+1*\u+\k*\x+\h*\w},{\i*\s+1*\v+\k*\y+\h*\z});
\foreach \i in {0,...,1}
    \foreach \j in {0,...,1}
        \foreach \h in {0,...,1}
            \draw[line width=0.5pt, color=black] ({\i*\r+\j*\u+0*\x+\h*\w},{\i*\s+\j*\v+0*\y+\h*\z}) -- ({\i*\r+\j*\u+\x+\h*\w},{\i*\s+\j*\v+\y+\h*\z});
\foreach \i in {0,...,1}
    \foreach \j in {0,...,1}
        \foreach \h in {0,...,1}
            \draw[line width=0.5pt, color=black] ({\i*\R+\j*\u+0*\x+\h*\w},{\i*\s+\j*\v+0*\y+\h*\z}) -- ({\i*\R+\j*\u+\x+\h*\w},{\i*\s+\j*\v+\y+\h*\z});
\foreach \i in {0,...,1}
    \foreach \j in {0,...,1}
        \foreach \k in {0,...,1}
            \draw[line width=0.5pt, color=black] ({\i*\r+\j*\u+\k*\x+0*\w},{\i*\s+\j*\v+\k*\y+0*\z}) -- ({\i*\r+\j*\u+\k*\x+\w},{\i*\s+\j*\v+\k*\y+\z});
\foreach \i in {0,...,1}
    \foreach \j in {0,...,1}
        \foreach \k in {0,...,1}
            \draw[line width=0.5pt, color=black] ({\i*\R+\j*\u+\k*\x+0*\w},{\i*\s+\j*\v+\k*\y+0*\z}) -- ({\i*\R+\j*\u+\k*\x+\w},{\i*\s+\j*\v+\k*\y+\z});
\foreach \i in {0,...,1}
    \foreach \j in {0,...,1}
        \foreach \k in {0,...,1}
            \foreach \h in {0,...,1}
                \draw node at ({\i*\r+\j*\u+\k*\x+\h*\w},{\i*\s+\j*\v+\k*\y+\h*\z}) {};
\foreach \i in {0,...,1}
    \foreach \j in {0,...,1}
        \foreach \k in {0,...,1}
            \foreach \h in {0,...,1}
                \draw node at ({\i*\R+\j*\u+\k*\x+\h*\w},{\i*\s+\j*\v+\k*\y+\h*\z}) {};
\draw node (U) [label=left: {$u$}] at ({\u+\r+\x+\w},{\v+\s+\y+\z}) {};
\draw node (V) [label=above: {$v$}] at ({\u+\x+\w},{\v+\y+\z}) {};
\draw node (W) [label=right: {$w$}] at ({\u+\R+\x+\w},{\v+\s+\y+\z}) {};
\draw node (X) [label=above: {$x$}] at ({\u+\r/2+\x+\w},{\v+\y+\y+\z}) {};
\draw node (Y) [label=above: {$y$}] at ({\u+\R/2+\x+\w},{\v+\y+\y+\z}) {};
\draw node [label=below: {\color{gren} $0$}] at ({\u+\r/2+\x+\w},{\v+\y+\y+\z}) {};
\draw node [label=below: {\color{gren} $0$}] at ({\u+\R/2+\x+\w},{\v+\y+\y+\z}) {};
\draw[line width=0.5pt, color=black] (U) -- (X) -- (V);
\draw[line width=0.5pt, color=black] (V) -- (Y) -- (W);
    \draw[line width=1.5pt, color=red] ({\r+\x},{\s+\y}) -- ({\r+\u+\x},{\s+\v+\y});
    \draw[line width=1.5pt, color=red] ({\r},{\s}) -- ({\r+\x},{\s+\y});
    \draw[line width=1.5pt, color=red] ({\r+\u},{\s+\v}) -- ({\r+\u+\x},{\s+\v+\y});
    \draw[line width=1.5pt, color=red] ({\r+\u},{\s+\v}) -- ({\r+\u+\w},{\s+\v+\z});
    \draw[line width=1.5pt, color=red] ({\r+\w},{\s+\z}) -- ({\r+\u+\w},{\s+\v+\z});
    \draw[line width=1.5pt, color=gren] ({\r+\x+\w},{\s+\y+\z}) -- ({\r+\u+\x+\w},{\s+\v+\y+\z});
    \draw[line width=1.5pt, color=gren] ({\R+\x+\w},{\s+\y+\z}) -- ({\R+\u+\x+\w},{\s+\v+\y+\z});
    \draw[line width=1.5pt, color=gren] ({\w},{\z}) -- ({\x+\w},{\y+\z});
    \draw[line width=1.5pt, color=gren] ({\x+\w},{\y+\z}) -- ({\u+\x+\w},{\v+\y+\z});
    \draw[line width=1.5pt, color=blue] ({0},{0}) -- ({\u},{\v});
    \draw[line width=1.5pt, color=blue] ({0},{0}) -- ({\x},{\y});
    \draw[line width=1.5pt, color=blue] ({\u},{\v}) -- ({\u+\x},{\v+\y});
    \draw[line width=1.5pt, color=blue] ({\u},{\v}) -- ({\u+\w},{\v+\z});
    \draw[line width=1.5pt, color=orange] ({\R+\x},{\s+\y}) -- ({\R+\u+\x},{\s+\v+\y});
    \draw[line width=1.5pt, color=orange] ({\R},{\s}) -- ({\R+\x},{\s+\y});
    \draw[line width=1.5pt, color=orange] ({\R+\u},{\s+\v}) -- ({\R+\u+\x},{\s+\v+\y});
    \draw[line width=1.5pt, color=orange] ({\R+\u},{\s+\v}) -- ({\R+\u+\w},{\s+\v+\z});
    \draw[line width=1.5pt, color=orange] ({\R+\w},{\s+\z}) -- ({\R+\u+\w},{\s+\v+\z});
\foreach \i in {0,...,1}
    \foreach \j in {0,...,1}
        \foreach \k in {0,...,1}
            \foreach \h in {0,...,1}
                \draw node at ({\i*\r+\j*\u+\k*\x+\h*\w},{\i*\s+\j*\v+\k*\y+\h*\z}) {};
\foreach \i in {0,...,1}
    \foreach \j in {0,...,1}
        \foreach \k in {0,...,1}
            \foreach \h in {0,...,1}
                \draw node at ({\i*\R+\j*\u+\k*\x+\h*\w},{\i*\s+\j*\v+\k*\y+\h*\z}) {};
\draw node [label=left: {\color{red} $a$}] at ({\r+\u+\x},{\s+\v+\y}) {};
\draw node [label=below: {\color{blue} $b$}] at ({\u},{\v}) {};
\draw node [label=above: {\color{orange} $c$}] at ({\R+\u+\x},{\s+\v+\y}) {};
\draw node (U) [label=left: {$u$}] at ({\u+\r+\x+\w},{\v+\s+\y+\z}) {};
\draw node (V) [label=above: {\color{gren} $v$}] at ({\u+\x+\w},{\v+\y+\z}) {};
\draw node (W) [label=right: {$w$}] at ({\u+\R+\x+\w},{\v+\s+\y+\z}) {};
\draw node (X) [label=above: {$x$}] at ({\u+\r/2+\x+\w},{\v+\y+\y+\z}) {};
\draw node (Y) [label=above: {$y$}] at ({\u+\R/2+\x+\w},{\v+\y+\y+\z}) {};
\draw node [label=below: {\color{gren} $0$}] at ({\u+\r/2+\x+\w},{\v+\y+\y+\z}) {};
\draw node [label=below: {\color{gren} $0$}] at ({\u+\R/2+\x+\w},{\v+\y+\y+\z}) {};
\draw[line width=1.5pt, color=gren] (U) -- (X) -- (V);
\draw[line width=1.5pt, color=gren] (V) -- (Y) -- (W);
\end{tikzpicture}
$\qquad$ 
\begin{tikzpicture}[scale=.4]
\tikzstyle{every node}=[fill=white,minimum size=1pt,inner sep=2pt]
\def \s {2.5}
\draw node [label=center: {\underline{level}}] at (0,{5*\s}) {};
\foreach \i in {0,...,4}
    \pgfmathsetmacro{\l}{int(\i+4)}
    \draw node [label=center: {$\l$}] at (0,{\i*\s}) {};
\end{tikzpicture}
\caption{Coordinating the stacking of three ``consecutive'' level-4 3-cubes in $Q^d$ for $d\ge 8$.}
\label{fig:11CubesP3}
\end{center}
\end{figure}
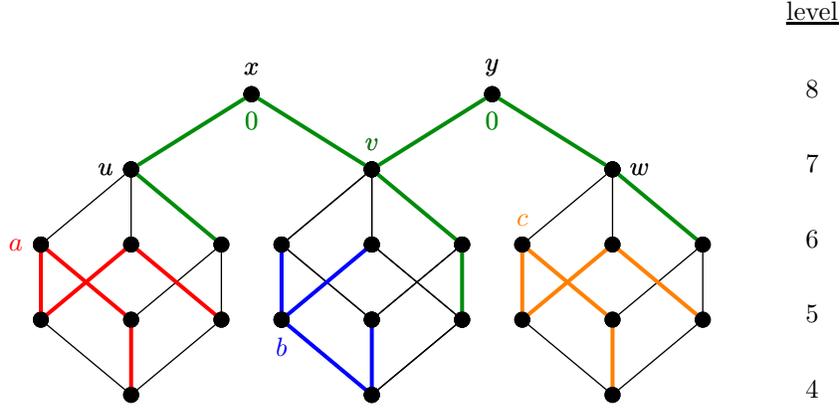

When $P$ consists of three consecutively adjacent vertices $U$, $V$, and $W$, we set $X=U\cup V$ and $Y=V\cup W$; so that $|X|=|Y|=4$.
Hence the top vertices $u$ and $v$ of the cubes labeled $U$ and $V$ are each adjacent in $Q^d$ to the top vertex $x$ of the cube labeled $X$, while the top vertices $v$ and $w$ of the cubes labeled $V$ and $W$ are each adjacent in $Q^d$ to the top vertex $y$ of the cube labeled $Y$ (see Figure \ref{fig:11CubesP3}).
In this case, the configuration of cups on this structure $S$ has one cup on every vertex except no cups on vertices $x$ and $y$, since all the cups in the 3-cubes labeled $X$ and $Y$ were already stacked onto the top vertices of those cubes.
Figure \ref{fig:11CubesP2} also shows in color the stacking partition of $S$ that places the appropriate number of cups on the appropriate levels of vertices $a$, $b$, $c$, and $v$ (roots of spiders) to then stack them all on $r$.
The point of vertices $x$ and $y$ is for them to be used to move two cups  from each of $u$ and $w$ to $v$.

This completes the proof.
\end{proof}

\begin{cor}
\label{c:11Cube}
For $d\le 11$, $Q^d$ is stackable.
\end{cor}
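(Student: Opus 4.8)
The plan is to reduce to $\bz$-stackability via Fact~\ref{f:VTrans} and then exhibit a $\bz$-stacking partition of $Q^d$ assembled entirely from the building blocks supplied by Lemmas~\ref{l:LevelkCubes} and~\ref{l:7Cube}, exactly as in the combination argument used inside Lemma~\ref{l:LevelkCubes}. For $d\le 6$ there is nothing to prove, since Lemma~\ref{l:LevelkCubes} already gives stackability, so I would assume $7\le d\le 11$. I would decompose $Q^d=Q^{d-4}\ssq Q^4$, which partitions the vertex set into $4$-cubes $A_S$ indexed by subsets $S\subseteq[d-4]$, where $A_S$ has level $|S|$. Since $d-4\le 11-4=7$, the only levels that occur lie in $\{0,1,\dots,7\}$, and the whole argument reduces to checking that a $4$-cube of each such level can be routed into one of the two lemmas; Lemma~\ref{l:Partition} then glues the disjoint $\bz$-stacking sub-partitions into a single $\bz$-stacking partition of $(Q^d,\b1)$.

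The level-by-level bookkeeping is the body of the proof. For $0\le l\le 3$, each level-$l$ $4$-cube is $\bz$-stackable directly by the first part of Lemma~\ref{l:7Cube}, whose hypothesis $d\ge l+4$ holds because $d\ge 7$. For $5\le l\le 7$, I would split each level-$l$ $4$-cube along one of its free coordinates into two $3$-cubes, of levels $l$ and $l+1$; since $5\le l$ and $l+1\le 8$, both sublevels fall inside the window $\{5,\dots,8\}$ in which Lemma~\ref{l:LevelkCubes} makes a $3$-cube $\bz$-stackable (consolidate its $8$ cups onto a weight-$8$ vertex, then send them to $\bz$), so the whole $4$-cube is $\bz$-stackable. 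The remaining level $l=4$ is the degenerate case the lemmas were built around: I would peel each level-$4$ $4$-cube into a level-$4$ and a level-$5$ $3$-cube, stack every level-$5$ $3$-cube onto its weight-$8$ top as above, and then invoke the second part of Lemma~\ref{l:7Cube}, which $\bz$-stacks the entire collection of level-$4$ $3$-cubes of $Q^d$ simultaneously (this needs $d\ge 8$, which is automatic precisely when level $4$ occurs). Collecting the sub-partitions over all levels yields the desired $\bz$-stacking partition.

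The genuinely delicate point, and the one I would treat most carefully, is the level arithmetic that pins the ceiling at $d\le 11$: the high-level splitting trick works only because every $3$-cube it produces still contains a weight-$8$ vertex to consolidate onto, and this breaks the instant a level-$9$ $3$-cube (weights $9,\dots,12$) appears. Such a $3$-cube first arises when a level-$8$ $4$-cube is split, i.e.\ when $d-4\ge 8$, that is $d\ge 12$; for $d\le 11$ the maximum occurring level is $7$, so every split stays safely inside $\{5,\dots,8\}$. It is worth emphasizing that the real technical weight of the result is already packaged into Lemma~\ref{l:7Cube} (the coordinated collective stacking of all level-$4$ $3$-cubes via the Hamiltonicity of the Johnson graphs); the corollary itself is a finite case check over $l\in\{0,1,2,3\}$, $l=4$, and $l\in\{5,6,7\}$, and I would close it by noting that level $8$ is exactly the obstruction bounding this elementary method at $d=11$, with the symmetric-chain injection of Corollary~\ref{c:Inject} being what is needed to push beyond.
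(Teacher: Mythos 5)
Your handling of $d\le 7$ and of the $4$-cubes at levels $0$--$3$ and $5$--$7$ is sound and uses the same ingredients as the paper, but your level-$4$ step contains a genuine gap. The second part of Lemma~\ref{l:7Cube} is a statement about \emph{the set of all} level-$4$ $3$-cubes of $Q^d$ arising from the partition $Q^d=Q^{d-3}\ssq Q^3$, i.e.\ those labeled by all of $\binom{[d-3]}{4}$, and its proof runs a Hamilton cycle of $J(d-3,4,3)$ through that entire label set. In your scheme the level-$4$ $3$-cubes that actually remain are only those labeled by $\binom{[d-4]}{4}$: every level-$4$ $3$-cube whose label contains the splitting coordinate $d-3$ is the top half of a level-$3$ $4$-cube, and you have already consumed those wholesale via the first part of Lemma~\ref{l:7Cube}. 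So you are invoking the lemma on a proper subfamily, which its statement does not license and its proof does not cover.

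This is not a cosmetic mismatch. At $d=8$ your leftover family is the single $3$-cube labeled $[4]\subseteq[d-4]$, and a lone level-$4$ $3$-cube is provably not $\bz$-stackable in isolation: its vertices have weights $4,\dots,7$; since a move's target must already hold a cup, any vertex onto which a piece of the partition is stacked must carry a cup of that piece, so a piece of size $m$ requires a cupped vertex of weight exactly $m$; hence the only conceivable split of its $8$ cups is $4+4$, which needs two cupped weight-$4$ vertices while the cube has exactly one. The coordinated pairing in Lemma~\ref{l:7Cube} exists precisely because such cubes must cooperate. For $d=10,11$ your route could in principle be repaired by re-running the lemma's proof over $J(d-4,4,3)$, which is Hamiltonian by Theorem~\ref{t:JohnsonHam}; for $d=9$ one would need a further ad hoc argument since $J(5,4,3)\cong K_5$ lies outside that theorem's stated range; but $d=8$ cannot be salvaged within your decomposition. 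The paper avoids all of this by switching decompositions at $d=8$: for $8\le d\le 11$ it writes $Q^d=Q^{d-3}\ssq Q^3$ directly, so the $3$-cubes have levels at most $d-3\le 8$, those of level $\ne 4$ are handled by Lemma~\ref{l:LevelkCubes}, and the second part of Lemma~\ref{l:7Cube} applies verbatim to the full family of level-$4$ $3$-cubes. Adopting that decomposition for $d\ge 8$ closes the gap.
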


\begin{proof}
The cases $d\le 6$ is proven in Lemma \ref{l:LevelkCubes}.

When $d=7$ we write $Q^7=Q^3\ssq Q^4$, which partitions $Q^d$ into 4-cubes having levels at most 3, which are $\bz$-stackable by Lemma \ref{l:7Cube}.

When $8\le d\le 11$ we write $Q^d=Q^{d-3}\ssq Q^3$, which partitions $Q^d$ into 3-cubes having levels at most 8, which are $\bz$-stackable by Lemmas \ref{l:LevelkCubes} and \ref{l:7Cube}.
\end{proof}

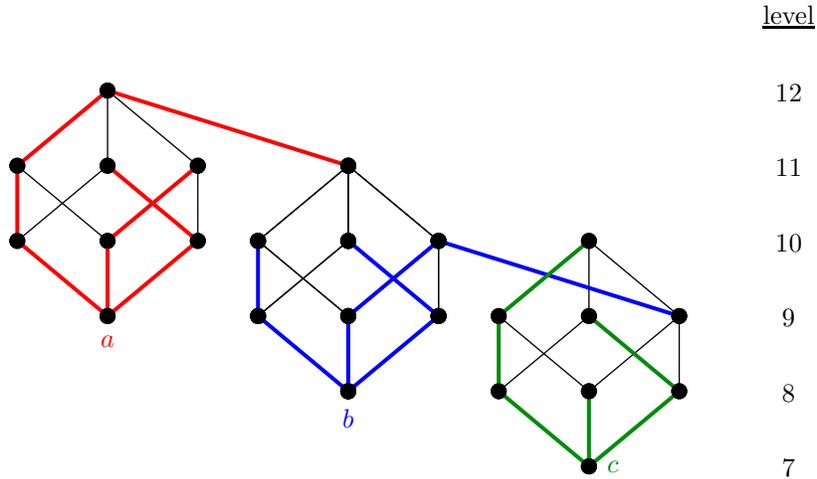
\begin{figure}[ht]
\begin{center}
\begin{tikzpicture}[scale=.4]
\tikzstyle{every node}=[draw,circle,fill=black,minimum size=1pt,inner sep=2pt]
\def \r {-8}
\def \R {8}
\def \s {2.5}
\def \S {-2.5}
\def \u {-3}
\def \v {2.5}
\def \x {0}
\def \y {2.5}
\def \w {3}
\def \z {2.5}
\foreach \i in {0,...,1}
    \foreach \k in {0,...,1}
        \foreach \h in {0,...,1}
            \draw[line width=0.5pt, color=black] ({\i*\r+0*\u+\k*\x+\h*\w},{\i*\s+0*\v+\k*\y+\h*\z}) -- ({\i*\r+1*\u+\k*\x+\h*\w},{\i*\s+1*\v+\k*\y+\h*\z});
\foreach \i in {0,...,1}
    \foreach \k in {0,...,1}
        \foreach \h in {0,...,1}
            \draw[line width=0.5pt, color=black] ({\i*\R+0*\u+\k*\x+\h*\w},{\i*\S+0*\v+\k*\y+\h*\z}) -- ({\i*\R+1*\u+\k*\x+\h*\w},{\i*\S+1*\v+\k*\y+\h*\z});
\foreach \i in {0,...,1}
    \foreach \j in {0,...,1}
        \foreach \h in {0,...,1}
            \draw[line width=0.5pt, color=black] ({\i*\r+\j*\u+0*\x+\h*\w},{\i*\s+\j*\v+0*\y+\h*\z}) -- ({\i*\r+\j*\u+\x+\h*\w},{\i*\s+\j*\v+\y+\h*\z});
\foreach \i in {0,...,1}
    \foreach \j in {0,...,1}
        \foreach \h in {0,...,1}
            \draw[line width=0.5pt, color=black] ({\i*\R+\j*\u+0*\x+\h*\w},{\i*\S+\j*\v+0*\y+\h*\z}) -- ({\i*\R+\j*\u+\x+\h*\w},{\i*\S+\j*\v+\y+\h*\z});
\foreach \i in {0,...,1}
    \foreach \j in {0,...,1}
        \foreach \k in {0,...,1}
            \draw[line width=0.5pt, color=black] ({\i*\r+\j*\u+\k*\x+0*\w},{\i*\s+\j*\v+\k*\y+0*\z}) -- ({\i*\r+\j*\u+\k*\x+\w},{\i*\s+\j*\v+\k*\y+\z});
\foreach \i in {0,...,1}
    \foreach \j in {0,...,1}
        \foreach \k in {0,...,1}
            \draw[line width=0.5pt, color=black] ({\i*\R+\j*\u+\k*\x+0*\w},{\i*\S+\j*\v+\k*\y+0*\z}) -- ({\i*\R+\j*\u+\k*\x+\w},{\i*\S+\j*\v+\k*\y+\z});
\foreach \i in {0,...,1}
    \foreach \j in {0,...,1}
        \foreach \k in {0,...,1}
            \foreach \h in {0,...,1}
                \draw node at ({\i*\r+\j*\u+\k*\x+\h*\w},{\i*\s+\j*\v+\k*\y+\h*\z}) {};
\foreach \i in {0,...,1}
    \foreach \j in {0,...,1}
        \foreach \k in {0,...,1}
            \foreach \h in {0,...,1}
                \draw node at ({\i*\R+\j*\u+\k*\x+\h*\w},{\i*\S+\j*\v+\k*\y+\h*\z}) {};
    \draw[line width=1.5pt, color=red] ({\r},{\s}) -- ({\r+\u},{\s+\v});
    \draw[line width=1.5pt, color=red] ({\r},{\s}) -- ({\r+\x},{\s+\y});
    \draw[line width=1.5pt, color=red] ({\r},{\s}) -- ({\r+\w},{\s+\z});
    \draw[line width=1.5pt, color=red] ({\r+\u},{\s+\v}) -- ({\r+\u+\x},{\s+\v+\y});
    \draw[line width=1.5pt, color=red] ({\r+\x},{\s+\y}) -- ({\r+\x+\w},{\s+\y+\z});
    \draw[line width=1.5pt, color=red] ({\r+\w},{\s+\z}) -- ({\r+\u+\w},{\s+\v+\z});
    \draw[line width=1.5pt, color=red] ({\r+\u+\x},{\s+\v+\y}) -- ({\r+\u+\x+\w},{\s+\v+\y+\z});
    \draw[line width=1.5pt, color=red] ({\u+\x+\w},{\v+\y+\z}) -- ({\r+\u+\x+\w},{\s+\v+\y+\z});
    \draw[line width=1.5pt, color=blue] ({0},{0}) -- ({\u},{\v});
    \draw[line width=1.5pt, color=blue] ({\w},{\z}) -- ({\u+\w},{\v+\z});
    \draw[line width=1.5pt, color=blue] ({0},{0}) -- ({\x},{\y});
    \draw[line width=1.5pt, color=blue] ({0},{0}) -- ({\w},{\z});
    \draw[line width=1.5pt, color=blue] ({\u},{\v}) -- ({\u+\x},{\v+\y});
    \draw[line width=1.5pt, color=blue] ({\x},{\y}) -- ({\x+\w},{\y+\z});
    \draw[line width=1.5pt, color=blue] ({\x+\w},{\y+\z}) -- ({\R+\x+\w},{\S+\y+\z});
    \draw[line width=1.5pt, color=gren] ({\R},{\S}) -- ({\R+\u},{\S+\v});
    \draw[line width=1.5pt, color=gren] ({\R},{\S}) -- ({\R+\x},{\S+\y});
    \draw[line width=1.5pt, color=gren] ({\R},{\S}) -- ({\R+\w},{\S+\z});
    \draw[line width=1.5pt, color=gren] ({\R+\u},{\S+\v}) -- ({\R+\u+\x},{\S+\v+\y});
    \draw[line width=1.5pt, color=gren] ({\R+\u+\x},{\S+\v+\y}) -- ({\R+\u+\x+\w},{\S+\v+\y+\z});
    \draw[line width=1.5pt, color=gren] ({\R+\w},{\S+\z}) -- ({\R+\u+\w},{\S+\v+\z});
\foreach \i in {0,...,1}
    \foreach \j in {0,...,1}
        \foreach \k in {0,...,1}
            \foreach \h in {0,...,1}
                \draw node at ({\i*\r+\j*\u+\k*\x+\h*\w},{\i*\s+\j*\v+\k*\y+\h*\z}) {};
\foreach \i in {0,...,1}
    \foreach \j in {0,...,1}
        \foreach \k in {0,...,1}
            \foreach \h in {0,...,1}
                \draw node at ({\i*\R+\j*\u+\k*\x+\h*\w},{\i*\S+\j*\v+\k*\y+\h*\z}) {};
\draw node [label=below: {\color{red} $a$}] at ({\r},{\s}) {};
\draw node [label=below: {\color{blue} $b$}] at ({0},{0}) {};
\draw node [label=right: {\color{gren} $c$}] at ({\R},{\S}) {};
\end{tikzpicture}
$\qquad$ 
\begin{tikzpicture}[scale=.4]
\tikzstyle{every node}=[fill=white,minimum size=1pt,inner sep=2pt]
\def \s {2.5}
\draw node [label=center: {\underline{level}}] at (0,{6*\s}) {};
\foreach \i in {0,...,5}
    \pgfmathsetmacro{\l}{int(\i+7)}
    \draw node [label=center: {$\l$}] at (0,{\i*\s}) {};
\end{tikzpicture}
\caption{Coordinating the $\bz$-stacking of 3-cubes at levels 7--9 in $Q^{12}$.}
\label{fig:12Cubes}
\end{center}
\end{figure}

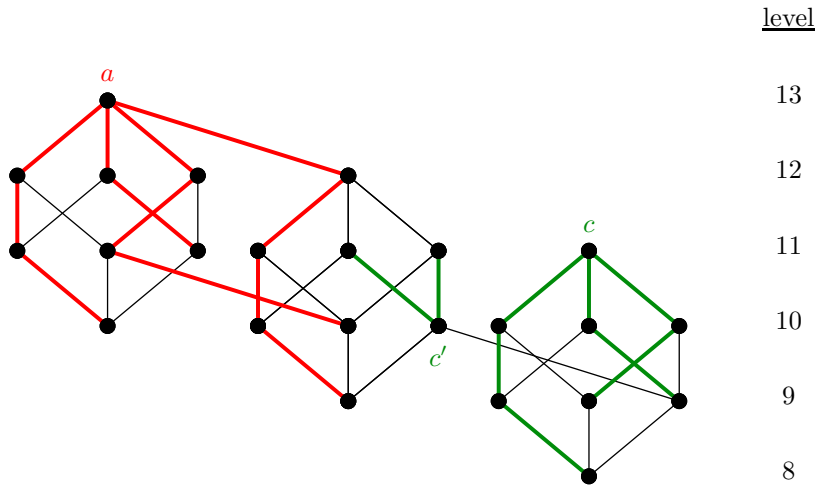
\begin{figure}[ht]
\begin{center}
\begin{tikzpicture}[scale=.4]
\tikzstyle{every node}=[draw,circle,fill=black,minimum size=1pt,inner sep=2pt]
\def \r {-8}
\def \R {8}
\def \s {2.5}
\def \S {-2.5}
\def \u {-3}
\def \v {2.5}
\def \x {0}
\def \y {2.5}
\def \w {3}
\def \z {2.5}
\foreach \i in {0,...,1}
    \foreach \k in {0,...,1}
        \foreach \h in {0,...,1}
            \draw[line width=0.5pt, color=black] ({\i*\r+0*\u+\k*\x+\h*\w},{\i*\s+0*\v+\k*\y+\h*\z}) -- ({\i*\r+1*\u+\k*\x+\h*\w},{\i*\s+1*\v+\k*\y+\h*\z});
\foreach \i in {0,...,1}
    \foreach \k in {0,...,1}
        \foreach \h in {0,...,1}
            \draw[line width=0.5pt, color=black] ({\i*\R+0*\u+\k*\x+\h*\w},{\i*\S+0*\v+\k*\y+\h*\z}) -- ({\i*\R+1*\u+\k*\x+\h*\w},{\i*\S+1*\v+\k*\y+\h*\z});
\foreach \i in {0,...,1}
    \foreach \j in {0,...,1}
        \foreach \h in {0,...,1}
            \draw[line width=0.5pt, color=black] ({\i*\r+\j*\u+0*\x+\h*\w},{\i*\s+\j*\v+0*\y+\h*\z}) -- ({\i*\r+\j*\u+\x+\h*\w},{\i*\s+\j*\v+\y+\h*\z});
\foreach \i in {0,...,1}
    \foreach \j in {0,...,1}
        \foreach \h in {0,...,1}
            \draw[line width=0.5pt, color=black] ({\i*\R+\j*\u+0*\x+\h*\w},{\i*\S+\j*\v+0*\y+\h*\z}) -- ({\i*\R+\j*\u+\x+\h*\w},{\i*\S+\j*\v+\y+\h*\z});
\foreach \i in {0,...,1}
    \foreach \j in {0,...,1}
        \foreach \k in {0,...,1}
            \draw[line width=0.5pt, color=black] ({\i*\r+\j*\u+\k*\x+0*\w},{\i*\s+\j*\v+\k*\y+0*\z}) -- ({\i*\r+\j*\u+\k*\x+\w},{\i*\s+\j*\v+\k*\y+\z});
\foreach \i in {0,...,1}
    \foreach \j in {0,...,1}
        \foreach \k in {0,...,1}
            \draw[line width=0.5pt, color=black] ({\i*\R+\j*\u+\k*\x+0*\w},{\i*\S+\j*\v+\k*\y+0*\z}) -- ({\i*\R+\j*\u+\k*\x+\w},{\i*\S+\j*\v+\k*\y+\z});
\foreach \i in {0,...,1}
    \foreach \j in {0,...,1}
        \foreach \k in {0,...,1}
            \foreach \h in {0,...,1}
                \draw node at ({\i*\r+\j*\u+\k*\x+\h*\w},{\i*\s+\j*\v+\k*\y+\h*\z}) {};
\foreach \i in {0,...,1}
    \foreach \j in {0,...,1}
        \foreach \k in {0,...,1}
            \foreach \h in {0,...,1}
                \draw node at ({\i*\R+\j*\u+\k*\x+\h*\w},{\i*\S+\j*\v+\k*\y+\h*\z}) {};
    \draw[line width=1.5pt, color=red] ({\r},{\s}) -- ({\r+\u},{\s+\v});
    \draw[line width=1.5pt, color=red] ({\r+\u},{\s+\v}) -- ({\r+\u+\x},{\s+\v+\y});
    \draw[line width=1.5pt, color=red] ({\r+\x},{\s+\y}) -- ({\r+\x+\w},{\s+\y+\z});
    \draw[line width=1.5pt, color=red] ({\r+\w},{\s+\z}) -- ({\r+\u+\w},{\s+\v+\z});
    \draw[line width=1.5pt, color=red] ({\r+\u+\x},{\s+\v+\y}) -- ({\r+\u+\x+\w},{\s+\v+\y+\z});
    \draw[line width=1.5pt, color=red] ({\r+\u+\w},{\s+\v+\z}) -- ({\r+\u+\x+\w},{\s+\v+\y+\z});
    \draw[line width=1.5pt, color=red] ({\r+\x+\w},{\s+\y+\z}) -- ({\r+\u+\x+\w},{\s+\v+\y+\z});
    \draw[line width=1.5pt, color=red] ({\x},{\y}) -- ({\r+\x},{\s+\y});
    \draw[line width=1.5pt, color=red] ({\u+\x+\w},{\v+\y+\z}) -- ({\r+\u+\x+\w},{\s+\v+\y+\z});
    \draw[line width=1.5pt, color=red] ({0},{0}) -- ({\u},{\v});
    \draw[line width=1.5pt, color=red] ({\u},{\v}) -- ({\u+\x},{\v+\y});
    \draw[line width=1.5pt, color=gren] ({\w},{\z}) -- ({\u+\w},{\v+\z});
    \draw[line width=1.5pt, color=gren] ({\w},{\z}) -- ({\x+\w},{\y+\z});
    \draw[line width=1.5pt, color=red] ({\u+\x},{\v+\y}) -- ({\u+\x+\w},{\v+\y+\z});
    \draw[line width=0.5pt] ({\w},{\z}) -- ({\R+\w},{\S+\z});
    \draw[line width=1.5pt, color=gren] ({\R},{\S}) -- ({\R+\u},{\S+\v});
    \draw[line width=1.5pt, color=gren] ({\R+\u},{\S+\v}) -- ({\R+\u+\x},{\S+\v+\y});
    \draw[line width=1.5pt, color=gren] ({\R+\x},{\S+\y}) -- ({\R+\x+\w},{\S+\y+\z});
    \draw[line width=1.5pt, color=gren] ({\R+\u+\x},{\S+\v+\y}) -- ({\R+\u+\x+\w},{\S+\v+\y+\z});
    \draw[line width=1.5pt, color=gren] ({\R+\u+\w},{\S+\v+\z}) -- ({\R+\u+\x+\w},{\S+\v+\y+\z});
    \draw[line width=1.5pt, color=gren] ({\R+\x+\w},{\S+\y+\z}) -- ({\R+\u+\x+\w},{\S+\v+\y+\z});
    \draw[line width=1.5pt, color=gren] ({\R+\w},{\S+\z}) -- ({\R+\u+\w},{\S+\v+\z});
\foreach \i in {0,...,1}
    \foreach \j in {0,...,1}
        \foreach \k in {0,...,1}
            \foreach \h in {0,...,1}
                \draw node at ({\i*\r+\j*\u+\k*\x+\h*\w},{\i*\s+\j*\v+\k*\y+\h*\z}) {};
\foreach \i in {0,...,1}
    \foreach \j in {0,...,1}
        \foreach \k in {0,...,1}
            \foreach \h in {0,...,1}
                \draw node at ({\i*\R+\j*\u+\k*\x+\h*\w},{\i*\S+\j*\v+\k*\y+\h*\z}) {};
\draw node [label=above: {\color{red} $a$}] at ({\r+\u+\x+\w},{\s+\v+\y+\z}) {};
\draw node [label=below: {\color{gren} $c'$}] at ({\w},{\z}) {};
\draw node [label=above: {\color{gren} $c$}] at ({\R+\u+\x+\w},{\S+\v+\y+\z}) {};
\end{tikzpicture}
$\qquad$ 
\begin{tikzpicture}[scale=.4]
\tikzstyle{every node}=[fill=white,minimum size=1pt,inner sep=2pt]
\def \s {2.5}
\draw node [label=center: {\underline{level}}] at (0,{6*\s}) {};
\foreach \i in {0,...,5}
    \pgfmathsetmacro{\l}{int(\i+8)}
    \draw node [label=center: {$\l$}] at (0,{\i*\s}) {};
\end{tikzpicture}
\caption{Coordinating the $\bz$-stacking of 3-cubes at levels 8--10 in $Q^{13}$.}
\label{fig:13Cubes}
\end{center}
\end{figure}

\begin{lem}
\label{l:ABC3Cubes}
Suppose that $9\le l\le 12$ and $l+3\le d\le 2l$ and choose some injection $\f$ from Corollary \ref{c:Inject}, with $n=d-3$.
Let $A\in\binom{[n]}{l}$ be the label of a level-$l$ 3-cube in $Q^d$, $B=\f(A)$, and $C=\f(B)$.
Then $A\cup B\cup C$ is $\bz$-stackable in $Q^d$.
\end{lem}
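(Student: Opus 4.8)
The plan is to invoke the Stacking Partition Lemma (Lemma \ref{l:Partition}) with target $\bz$, after first pinning down the geometry of $A\cup B\cup C$. By Corollary \ref{c:Inject} the sets $B=\f(A)$ and $C=\f(B)$ lie on the same symmetric chain as $A$, so $C\subseteq B\subseteq A$ with $|A\setminus B|=|B\setminus C|=1$, and these two ``missing'' coordinates are distinct. Reading each label as a $3$-cube in the product $Q^d=Q^{d-3}\ssq Q^3$, this means cube $B$ is joined to cube $A$ along one dimension and to cube $C$ along another: corresponding vertices (those agreeing in the last three coordinates) of $B$ and $A$ are adjacent in $Q^d$, and likewise for $B$ and $C$. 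Thus $A\cup B\cup C$ is a connected ``caterpillar'' of three isometrically embedded $3$-cubes on $24$ vertices, with $B$ in the middle.

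The key reduction is that it suffices to partition these $24$ vertices into spiders, each rooted at a vertex whose weight equals the order of the spider and each of whose legs is a geodesic of $Q^d$. Indeed, such a spider is root-stackable by Fact \ref{f:PathsCyclesSpiders}, and once its $w$ cups are gathered at a root of weight $w=\dist(\cdot,\bz)$ they move in a single step onto $\bz$; so such a partition is exactly a $\bz$-stacking partition in the sense of Lemma \ref{l:Partition}. Since every vertex carries one cup, the orders of the chosen spiders must sum to $24$, and each root weight must lie in the range $[\,l-2,\,l+3\,]$ of weights occurring in the three cubes.

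I would then construct such a partition explicitly for each $l\in\{9,10,11,12\}$. For $l=9$ the cubes occupy weights $7$ through $12$; since weight $12$ is attained only at the top of $A$, there are not two roots of weight $12$ available, so I would use three spiders rooted at the three cube-bottoms, of orders $7,8,9$ (Figure \ref{fig:12Cubes}). For $l\in\{10,11,12\}$ several vertices of weight $12$ exist, so I would instead use two spiders, each of order $12$ rooted at a weight-$12$ vertex (Figure \ref{fig:13Cubes} illustrates $l=10$; the cases $l=11,12$ are entirely analogous, the caterpillar sitting one or two levels higher). The inter-cube edges are what make this possible: they let a single spider span more than one cube, so that it can reach the required order, and they let us hand individual cups across a cube boundary through the unique connecting dimension to hit the exact target orders (this is the role of an auxiliary accumulation vertex such as $c'$ in Figure \ref{fig:13Cubes}).

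The main obstacle is precisely this explicit construction: for each of the four values of $l$ one must exhibit a partition of the $24$ vertices that is simultaneously (i) a partition into spiders, (ii) with each spider's order equal to its root's weight and the orders summing to $24$, and (iii) isometric, meaning every leg is a geodesic of $Q^d$, so that the distance-preservation requirement (property~\ref{p:distances} of a stacking partition) holds along the leg-to-root moves. Conditions (ii) and (iii) pull against each other, since lowering a root's weight shortens the legs it can anchor while raising it forces the spider to borrow vertices from a neighboring cube, and it is the verification that a shape meeting all three conditions exists---carried out by the figures---that constitutes the real content. The restriction to three cubes and $l\le 12$ is dictated by this bookkeeping: with $24$ cups and top weight only $l+3$, the orders can be balanced into admissible root weights precisely in this range.
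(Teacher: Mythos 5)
Your setup is right and matches the paper: $C\subset B\subset A$ along a symmetric chain, the three cubes form an $A$--$B$--$C$ ``caterpillar'' joined by single coordinate directions, and the reduction via Lemma~\ref{l:Partition} and Fact~\ref{f:PathsCyclesSpiders} --- partition the cup-bearing vertices into spiders with geodesic legs whose order equals the root's weight --- is sound and is exactly how the paper handles $l=9$ (three spiders of orders $9,8,7$ rooted at the cube bottoms, with $A$ stealing from $B$ and $B$ from $C$). The gap is in the cases $l\in\{10,11,12\}$, which are the real content of the lemma and which you assert rather than construct. Your claim that two order-$12$ spiders rooted at weight-$12$ vertices suffice is not what Figure~\ref{fig:13Cubes} shows: there the paper uses a $13+11$ split, stacking $13$ cups at the weight-$13$ top of $A$ (which steals $5$ vertices of $B$), and then an $11$-cup part in which $3$ cups are gathered at $c'$ and moved as a block of $3$ to $c$, joining $8$ cups gathered there, before $11$ cups travel to $\bz$ from a weight-$11$ vertex. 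That $c'\to c$ hand-off, which you yourself invoke, is fatal to your stated reduction: a part containing a hand-off is not a single spider with order equal to root weight, but a multi-stage $(D_i,\bz)$-feasible part in the general sense of Lemma~\ref{l:Partition}. Either you must widen the reduction to admit such parts (as the paper's feasibility notion does) and then exhibit them case by case, or you must prove your stricter two-spider claim --- and you do neither. Likewise ``the cases $l=11,12$ are entirely analogous'' is not true even of the paper, whose Figures~\ref{fig:14Cubes} and~\ref{fig:15Cubes} use different bespoke decompositions with supplemental spiders $a_1,a_2,c_1,c_2$.

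Worse, there is concrete evidence your two-spider claim fails, at least for $l=10$. In a spider with geodesic legs, the vertex at position $k$ along a leg lies at distance exactly $k$ from the root, so the number of distance-$k$ vertices in the spider is at most the number of legs; and legs can only start at neighbors of the root inside $A\cup B\cup C$ (a leg cannot detour through cupless vertices outside the three cubes, since path-stacking breaks over a vertex with no cup, and no vertex of $A$ is adjacent to any vertex of $C$). For $l=10$ the weight-$12$ vertices are the three middle-level vertices of $A$ and the top of $B$; the top of $B$ has only $5$ neighbors in the union and each candidate root in $A$ has only $4$, and once the two spiders must share these scarce neighbors and position slots, the natural splits (e.g.\ each root in $A$, or one root at the top of $B$) run into the counting obstruction that there are more distance-$2$ vertices to place than position-$2$ slots available. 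At minimum your scheme needs a verified explicit partition for each of $l=10,11,12$; since the figures you cite realize a different (and not purely spider-based) decomposition, your proposal as written does not prove the lemma for three of its four cases.
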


\begin{proof}
We note first that the existence of $\f$ is given by the hypothesis of Corollary \ref{c:Inject}: if $\lev(A)=l$ then $\lev(B)=l-1> (d-3)/2$.

The stacking partitions are given in Figures \ref{fig:12Cubes}, \ref{fig:13Cubes}, \ref{fig:14Cubes}, and \ref{fig:15Cubes}.

In Figure \ref{fig:12Cubes}, a level-9 3-cube $A$ steals a vertex from the level-8 3-cube $B$ to form a spider rooted at $a$, with the resulting 9 cups stacked at $a$ before being sent to $\bz$.
Then the ``damaged'' cube $B$ steals a vertex from the level-7 3-cube $C$ to form a spider rooted at $b$, with the resulting 8 cups stacked at $b$ before being sent to $\bz$.
Finally, the damaged $C$ stacks its 7 cups at the root of the spider at $c$ before sending them to $\bz$.

In Figure \ref{fig:13Cubes}, $A$ steals 5 vertices from $B$ to form a spider at $a$, with the resulting 13 cups stacked at $a$ before being sent to $\bz$. 
Then $C$ steals the remaining three vertices from $B$ to form spiders at $c$ and $c'$, with the 3 cups stacked at $c'$ being sent to $c$, joining with the 8 cups stacked at $c$ to send the resulting 11 cups to $\bz$.

In Figures \ref{fig:14Cubes} and \ref{fig:15Cubes}, similar schemes are shown, combining stealing, spiders, and supplemental spiders, the details of which are left to the reader.
\end{proof}

\begin{figure}[ht]
\begin{center}
\begin{tikzpicture}[scale=.4]
\tikzstyle{every node}=[draw,circle,fill=black,minimum size=1pt,inner sep=2pt]
\def \r {-8}
\def \R {8}
\def \s {2.5}
\def \S {-2.5}
\def \u {-3}
\def \v {2.5}
\def \x {0}
\def \y {2.5}
\def \w {3}
\def \z {2.5}
\foreach \i in {0,...,1}
    \foreach \k in {0,...,1}
        \foreach \h in {0,...,1}
            \draw[line width=0.5pt, color=black] ({\i*\r+0*\u+\k*\x+\h*\w},{\i*\s+0*\v+\k*\y+\h*\z}) -- ({\i*\r+1*\u+\k*\x+\h*\w},{\i*\s+1*\v+\k*\y+\h*\z});
\foreach \i in {0,...,1}
    \foreach \k in {0,...,1}
        \foreach \h in {0,...,1}
            \draw[line width=0.5pt, color=black] ({\i*\R+0*\u+\k*\x+\h*\w},{\i*\S+0*\v+\k*\y+\h*\z}) -- ({\i*\R+1*\u+\k*\x+\h*\w},{\i*\S+1*\v+\k*\y+\h*\z});
\foreach \i in {0,...,1}
    \foreach \j in {0,...,1}
        \foreach \h in {0,...,1}
            \draw[line width=0.5pt, color=black] ({\i*\r+\j*\u+0*\x+\h*\w},{\i*\s+\j*\v+0*\y+\h*\z}) -- ({\i*\r+\j*\u+\x+\h*\w},{\i*\s+\j*\v+\y+\h*\z});
\foreach \i in {0,...,1}
    \foreach \j in {0,...,1}
        \foreach \h in {0,...,1}
            \draw[line width=0.5pt, color=black] ({\i*\R+\j*\u+0*\x+\h*\w},{\i*\S+\j*\v+0*\y+\h*\z}) -- ({\i*\R+\j*\u+\x+\h*\w},{\i*\S+\j*\v+\y+\h*\z});
\foreach \i in {0,...,1}
    \foreach \j in {0,...,1}
        \foreach \k in {0,...,1}
            \draw[line width=0.5pt, color=black] ({\i*\r+\j*\u+\k*\x+0*\w},{\i*\s+\j*\v+\k*\y+0*\z}) -- ({\i*\r+\j*\u+\k*\x+\w},{\i*\s+\j*\v+\k*\y+\z});
\foreach \i in {0,...,1}
    \foreach \j in {0,...,1}
        \foreach \k in {0,...,1}
            \draw[line width=0.5pt, color=black] ({\i*\R+\j*\u+\k*\x+0*\w},{\i*\S+\j*\v+\k*\y+0*\z}) -- ({\i*\R+\j*\u+\k*\x+\w},{\i*\S+\j*\v+\k*\y+\z});
\foreach \i in {0,...,1}
    \foreach \j in {0,...,1}
        \foreach \k in {0,...,1}
            \foreach \h in {0,...,1}
                \draw node at ({\i*\r+\j*\u+\k*\x+\h*\w},{\i*\s+\j*\v+\k*\y+\h*\z}) {};
\foreach \i in {0,...,1}
    \foreach \j in {0,...,1}
        \foreach \k in {0,...,1}
            \foreach \h in {0,...,1}
                \draw node at ({\i*\R+\j*\u+\k*\x+\h*\w},{\i*\S+\j*\v+\k*\y+\h*\z}) {};
    \draw[line width=1.5pt, color=red] ({\r},{\s}) -- ({\r+\u},{\s+\v});
    \draw[line width=1.5pt, color=red] ({\r},{\s}) -- ({\r+\x},{\s+\y});
    \draw[line width=1.5pt, color=red] ({\r+\u},{\s+\v}) -- ({\r+\u+\w},{\s+\v+\z});
    \draw[line width=1.5pt, color=red] ({\r+\u},{\s+\v}) -- ({\r+\u+\x},{\s+\v+\y});
    \draw[line width=1.5pt, color=red] ({\r+\x},{\s+\y}) -- ({\r+\x+\w},{\s+\y+\z});
    \draw[line width=1.5pt, color=red] ({\r+\w},{\s+\z}) -- ({\r+\u+\w},{\s+\v+\z});
    \draw[line width=1.5pt, color=red] ({\r+\u+\x},{\s+\v+\y}) -- ({\r+\u+\x+\w},{\s+\v+\y+\z});
    \draw[line width=0.5pt] ({\u+\w},{\v+\z}) -- ({\r+\u+\w},{\s+\v+\z});
    \draw[line width=0.5pt] ({\u+\x},{\v+\y}) -- ({\r+\u+\x},{\s+\v+\y});
    \draw[line width=1.5pt, color=gren] ({0},{0}) -- ({\x},{\y});
    \draw[line width=1.5pt, color=gren] ({\w},{\z}) -- ({\x+\w},{\y+\z});
    \draw[line width=1.5pt, color=red] ({\u},{\v}) -- ({\u+\x},{\v+\y});
    \draw[line width=1.5pt, color=red] ({\u+\w},{\v+\z}) -- ({\u+\x+\w},{\v+\y+\z});
    \draw[line width=1.5pt, color=gren] ({0},{0}) -- ({\R},{\S});
    \draw[line width=0.5pt] ({\x+\w},{\y+\z}) -- ({\R+\x+\w},{\S+\y+\z});
    \draw[line width=1.5pt, color=gren] ({\R+\u+\w},{\S+\v+\z}) -- ({\R+\u+\x+\w},{\S+\v+\y+\z});
    \draw[line width=1.5pt, color=gren] ({\R+\x},{\S+\y}) -- ({\R+\x+\w},{\S+\y+\z});
    \draw[line width=1.5pt, color=gren] ({\R+\u},{\S+\v}) -- ({\R+\u+\x},{\S+\v+\y});
    \draw[line width=1.5pt, color=gren] ({\R+\u+\x},{\S+\v+\y}) -- ({\R+\u+\x+\w},{\S+\v+\y+\z});
    \draw[line width=1.5pt, color=gren] ({\R+\w},{\S+\z}) -- ({\R+\u+\w},{\S+\v+\z});
    \draw[line width=1.5pt, color=gren] ({\R+\x+\w},{\S+\y+\z}) -- ({\R+\u+\x+\w},{\S+\v+\y+\z});
\foreach \i in {0,...,1}
    \foreach \j in {0,...,1}
        \foreach \k in {0,...,1}
            \foreach \h in {0,...,1}
                \draw node at ({\i*\r+\j*\u+\k*\x+\h*\w},{\i*\s+\j*\v+\k*\y+\h*\z}) {};
\foreach \i in {0,...,1}
    \foreach \j in {0,...,1}
        \foreach \k in {0,...,1}
            \foreach \h in {0,...,1}
                \draw node at ({\i*\R+\j*\u+\k*\x+\h*\w},{\i*\S+\j*\v+\k*\y+\h*\z}) {};
\draw node [label=left: {\color{red} $a$}] at ({\r+\u},{\s+\v}) {};
\draw node [label=right: {\color{red} $a_1$}] at ({\u+\x},{\v+\y}) {};
\draw node [label=right: {\color{red} $a_2$}] at ({\u+\w},{\v+\z}) {};
\draw node [label=above: {\color{gren} $c$}] at ({\R+\u+\x+\w},{\S+\v+\y+\z}) {};
\draw node [label=above: {\color{gren} $c_1$}] at ({\x+\w},{\y+\z}) {};
\draw node [label=right: {\color{gren} $c_2$}] at ({\R},{\S}) {};
\end{tikzpicture}
$\qquad$ 
\begin{tikzpicture}[scale=.4]
\tikzstyle{every node}=[fill=white,minimum size=1pt,inner sep=2pt]
\def \s {2.5}
\draw node [label=center: {\underline{level}}] at (0,{6*\s}) {};
\foreach \i in {0,...,5}
    \pgfmathsetmacro{\l}{int(\i+9)}
    \draw node [label=center: {$\l$}] at (0,{\i*\s}) {};
\end{tikzpicture}
\caption{Coordinating the $\bz$-stacking of 3-cubes at levels 9--11 in $Q^{14}$.}
\label{fig:14Cubes}
\end{center}
\end{figure}
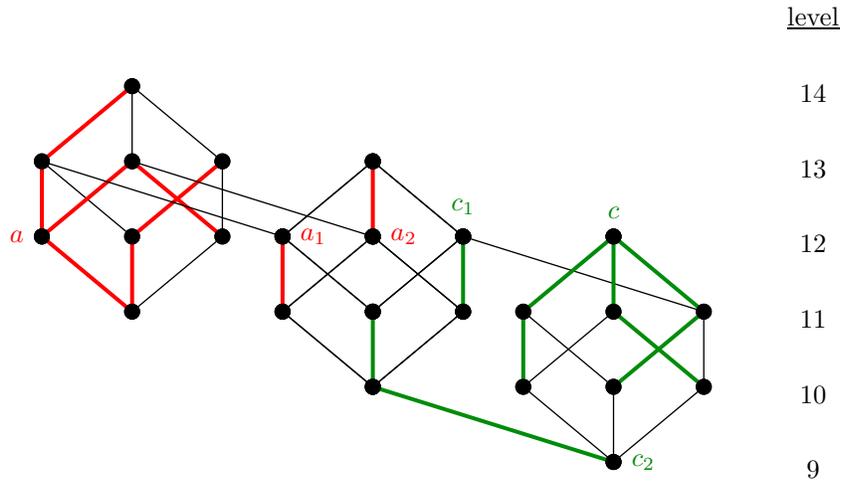

\begin{figure}[ht]
\begin{center}
\begin{tikzpicture}[scale=.4]
\tikzstyle{every node}=[draw,circle,fill=black,minimum size=1pt,inner sep=2pt]
\def \r {-8}
\def \R {8}
\def \s {2.5}
\def \S {-2.5}
\def \u {-3}
\def \v {2.5}
\def \x {0}
\def \y {2.5}
\def \w {3}
\def \z {2.5}
\foreach \i in {0,...,1}
    \foreach \k in {0,...,1}
        \foreach \h in {0,...,1}
            \draw[line width=0.5pt, color=black] ({\i*\r+0*\u+\k*\x+\h*\w},{\i*\s+0*\v+\k*\y+\h*\z}) -- ({\i*\r+1*\u+\k*\x+\h*\w},{\i*\s+1*\v+\k*\y+\h*\z});
\foreach \i in {0,...,1}
    \foreach \k in {0,...,1}
        \foreach \h in {0,...,1}
            \draw[line width=0.5pt, color=black] ({\i*\R+0*\u+\k*\x+\h*\w},{\i*\S+0*\v+\k*\y+\h*\z}) -- ({\i*\R+1*\u+\k*\x+\h*\w},{\i*\S+1*\v+\k*\y+\h*\z});
\foreach \i in {0,...,1}
    \foreach \j in {0,...,1}
        \foreach \h in {0,...,1}
            \draw[line width=0.5pt, color=black] ({\i*\r+\j*\u+0*\x+\h*\w},{\i*\s+\j*\v+0*\y+\h*\z}) -- ({\i*\r+\j*\u+\x+\h*\w},{\i*\s+\j*\v+\y+\h*\z});
\foreach \i in {0,...,1}
    \foreach \j in {0,...,1}
        \foreach \h in {0,...,1}
            \draw[line width=0.5pt, color=black] ({\i*\R+\j*\u+0*\x+\h*\w},{\i*\S+\j*\v+0*\y+\h*\z}) -- ({\i*\R+\j*\u+\x+\h*\w},{\i*\S+\j*\v+\y+\h*\z});
\foreach \i in {0,...,1}
    \foreach \j in {0,...,1}
        \foreach \k in {0,...,1}
            \draw[line width=0.5pt, color=black] ({\i*\r+\j*\u+\k*\x+0*\w},{\i*\s+\j*\v+\k*\y+0*\z}) -- ({\i*\r+\j*\u+\k*\x+\w},{\i*\s+\j*\v+\k*\y+\z});
\foreach \i in {0,...,1}
    \foreach \j in {0,...,1}
        \foreach \k in {0,...,1}
            \draw[line width=0.5pt, color=black] ({\i*\R+\j*\u+\k*\x+0*\w},{\i*\S+\j*\v+\k*\y+0*\z}) -- ({\i*\R+\j*\u+\k*\x+\w},{\i*\S+\j*\v+\k*\y+\z});
\foreach \i in {0,...,1}
    \foreach \j in {0,...,1}
        \foreach \k in {0,...,1}
            \foreach \h in {0,...,1}
                \draw node at ({\i*\r+\j*\u+\k*\x+\h*\w},{\i*\s+\j*\v+\k*\y+\h*\z}) {};
\foreach \i in {0,...,1}
    \foreach \j in {0,...,1}
        \foreach \k in {0,...,1}
            \foreach \h in {0,...,1}
                \draw node at ({\i*\R+\j*\u+\k*\x+\h*\w},{\i*\S+\j*\v+\k*\y+\h*\z}) {};
    \draw[line width=1.5pt, color=red] ({\r},{\s}) -- ({\r+\u},{\s+\v});
    \draw[line width=1.5pt, color=red] ({\r},{\s}) -- ({\r+\x},{\s+\y});
    \draw[line width=1.5pt, color=red] ({\r},{\s}) -- ({\r+\w},{\s+\z});
    \draw[line width=1.5pt, color=red] ({\r+\u},{\s+\v}) -- ({\r+\u+\x},{\s+\v+\y});
    \draw[line width=1.5pt, color=red] ({\r+\x},{\s+\y}) -- ({\r+\x+\w},{\s+\y+\z});
    \draw[line width=1.5pt, color=red] ({\r+\w},{\s+\z}) -- ({\r+\u+\w},{\s+\v+\z});
    \draw[line width=1.5pt, color=red] ({\r+\u+\x},{\s+\v+\y}) -- ({\r+\u+\x+\w},{\s+\v+\y+\z});
    \draw[line width=1.5pt, color=red] ({0},{0}) -- ({\r},{\s});
    \draw[line width=1.5pt, color=red] ({0},{0}) -- ({\u},{\v});
    \draw[line width=1.5pt, color=red] ({\u},{\v}) -- ({\u+\x},{\v+\y});
    \draw[line width=1.5pt, color=gren] ({\x},{\y}) -- ({\x+\w},{\y+\z});
    \draw[line width=1.5pt, color=gren] ({\w},{\z}) -- ({\u+\w},{\v+\z});
    \draw[line width=1.5pt, color=red] ({\u+\x},{\v+\y}) -- ({\u+\x+\w},{\v+\y+\z});
    \draw[line width=0.5pt] ({\x},{\y}) -- ({\R+\x},{\S+\y});
    \draw[line width=0.5pt] ({\w},{\z}) -- ({\R+\w},{\S+\z});
    \draw[line width=1.5pt, color=gren] ({\R},{\S}) -- ({\R+\x},{\S+\y});
    \draw[line width=1.5pt, color=gren] ({\R+\u},{\S+\v}) -- ({\R+\u+\x},{\S+\v+\y});
    \draw[line width=1.5pt, color=gren] ({\R+\x},{\S+\y}) -- ({\R+\x+\w},{\S+\y+\z});
    \draw[line width=1.5pt, color=gren] ({\R+\w},{\S+\z}) -- ({\R+\u+\w},{\S+\v+\z});
    \draw[line width=1.5pt, color=gren] ({\R+\w},{\S+\z}) -- ({\R+\x+\w},{\S+\y+\z});
    \draw[line width=1.5pt, color=gren] ({\R+\u+\x},{\S+\v+\y}) -- ({\R+\u+\x+\w},{\S+\v+\y+\z});
    \draw[line width=1.5pt, color=gren] ({\R+\x+\w},{\S+\y+\z}) -- ({\R+\u+\x+\w},{\S+\v+\y+\z});
\foreach \i in {0,...,1}
    \foreach \j in {0,...,1}
        \foreach \k in {0,...,1}
            \foreach \h in {0,...,1}
                \draw node at ({\i*\r+\j*\u+\k*\x+\h*\w},{\i*\s+\j*\v+\k*\y+\h*\z}) {};
\foreach \i in {0,...,1}
    \foreach \j in {0,...,1}
        \foreach \k in {0,...,1}
            \foreach \h in {0,...,1}
                \draw node at ({\i*\R+\j*\u+\k*\x+\h*\w},{\i*\S+\j*\v+\k*\y+\h*\z}) {};
\draw node [label=below: {\color{red} $a$}] at ({\r},{\s}) {};
\draw node [label=right: {\color{gren} $c$}] at ({\R+\x+\w},{\S+\y+\z}) {};
\draw node [label=left: {\color{gren} $c_1$}] at ({\x},{\y}) {};
\draw node [label=left: {\color{gren} $c_2$}] at ({\w},{\z}) {};
\end{tikzpicture}
$\qquad$ 
\begin{tikzpicture}[scale=.4]
\tikzstyle{every node}=[fill=white,minimum size=1pt,inner sep=2pt]
\def \s {2.5}
\draw node [label=center: {\underline{level}}] at (0,{6*\s}) {};
\foreach \i in {0,...,5}
    \pgfmathsetmacro{\l}{int(\i+10)}
    \draw node [label=center: {$\l$}] at (0,{\i*\s}) {};
\end{tikzpicture}
\caption{Coordinating the $\bz$-stacking of 3-cubes at levels 10--12 in $Q^{15}$.}
\label{fig:15Cubes}
\end{center}
\end{figure}
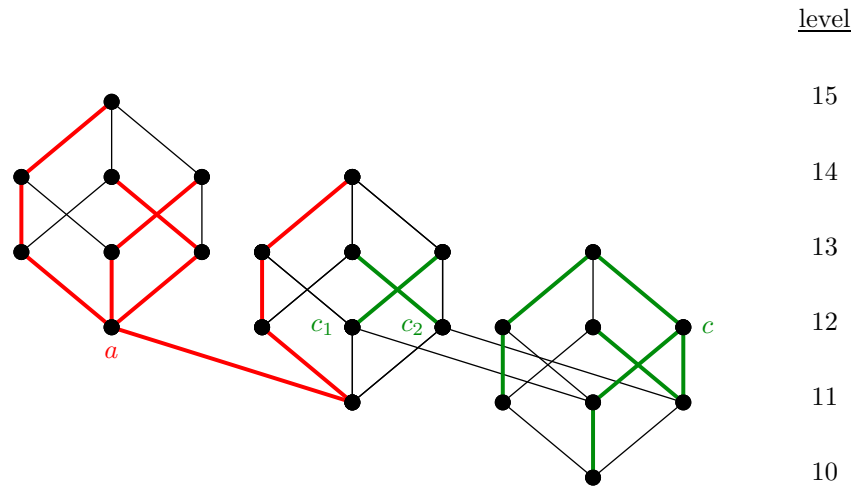

\begin{cor}
\label{c:15Cube}
For $d\le 15$, $Q^d$ is stackable.
\end{cor}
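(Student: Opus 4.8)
\noindent\textit{Proof proposal for Corollary \ref{c:15Cube}.}
By Fact \ref{f:VTrans} it suffices to prove that $Q^d$ is $\bz$-stackable, and by Corollary \ref{c:11Cube} we may assume $12\le d\le 15$. The plan is to write $Q^d=Q^{d-3}\ssq Q^3$, which partitions $Q^d$ into $3$-cubes labeled by their index sets $S\sse[n]$ with $n=d-3$, each sitting at level $|S|$, where $|S|$ ranges over $0,\ldots,n$. I would then build a $\bz$-stacking partition of $Q^d$ out of three kinds of pieces: the level-$4$ cubes handled as a set by Lemma \ref{l:7Cube}, the remaining low-level cubes handled individually by Lemma \ref{l:LevelkCubes}, and the high-level cubes (those at levels $\ge 9$) grouped into triples handled by Lemma \ref{l:ABC3Cubes}. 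Since these pieces will cover $V(Q^d)\setminus\{\bz\}$ and each is $\bz$-stackable, Lemma \ref{l:Partition} then yields $\bz$-stackability.

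The heart of the argument is the grouping of the high-level cubes. First I would fix a symmetric chain decomposition $\cC$ of $2^{[n]}$ (Theorem \ref{t:SCD}) and the associated injection $\f$ (Corollary \ref{c:Inject}). The key observation is that for a cube $A$ at level $l$, the triple $\{A,\f(A),\f(\f(A))\}$ consists of three consecutive elements of the chain of $\cC$ containing $A$, occupying levels $l,l-1,l-2$. I would process each chain whose top element lies at level $\ge 9$ by repeatedly removing, from the top down, a triple whose highest element is the highest not-yet-covered cube of level $\ge 9$ in that chain. Because $n\le 12$, each such top sits at some level $l\in\{9,10,11,12\}$, and the hypotheses $9\le l\le 12$ and $l+3\le d\le 2l$ of Lemma \ref{l:ABC3Cubes} hold: $l\le n=d-3$ gives $l+3\le d$, while $l\ge 9$ gives $d\le 15\le 18\le 2l$. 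Hence each triple is $\bz$-stackable. For $n\le 11$ each relevant chain contributes a single triple, whereas for $n=12$ the unique full chain (levels $0$ through $12$) contributes the two disjoint triples on levels $\{12,11,10\}$ and $\{9,8,7\}$. In every case the triples carved from a single chain are disjoint consecutive blocks, and triples from different chains are disjoint because $\cC$ partitions $2^{[n]}$.

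Every $3$-cube not placed in a triple lies at level $\le 8$, since each triple reaches down only to level $l-2\ge 7$. In particular no level-$4$ cube is ever used, so the entire set of level-$4$ cubes remains intact and is $\bz$-stackable by Lemma \ref{l:7Cube}, while each remaining cube at a level in $\{0,1,2,3,5,6,7,8\}$ is individually $\bz$-stackable by Lemma \ref{l:LevelkCubes} (the constraint $d\ge l+3$ holding throughout since $l\le 8\le d-3$). Collecting the triples together with these individual pieces yields a partition of $V(Q^d)\setminus\{\bz\}$ into $\bz$-stackable sub-structures, so Lemma \ref{l:Partition} finishes the proof.

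I expect the main obstacle to be purely the bookkeeping of this chain-by-chain grouping: one must verify that the greedy triples exhaust the level-$\ge 9$ cubes exactly once, that the cubes they ``steal'' at levels $7$ and $8$ are removed cleanly from the pool handled individually, and that the level-$4$ set survives untouched. The injectivity of $\f$ together with the fact that $\cC$ partitions $2^{[n]}$ makes disjointness automatic, but confirming the level counts — for instance, when $n=12$, that the $1$, $12$, $66$, and $220$ cubes at levels $12,11,10,9$ are each covered precisely once by the triples — is the step demanding care. Since all of the genuine stacking work is already encapsulated in Lemmas \ref{l:LevelkCubes}, \ref{l:7Cube}, and \ref{l:ABC3Cubes}, I anticipate no difficulty beyond this accounting.
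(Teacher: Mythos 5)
Your proposal is correct and follows essentially the same route as the paper: the reduction to $12\le d\le 15$ via Corollary \ref{c:11Cube}, the partition $Q^d=Q^{d-3}\ssq Q^3$, greedy removal of triples $\{A,\f(A),\f(\f(A))\}$ at levels $\ge 9$ via Lemma \ref{l:ABC3Cubes} with disjointness guaranteed by the symmetric chain decomposition, and the leftover cubes at levels $\le 8$ handled by Lemmas \ref{l:LevelkCubes} and \ref{l:7Cube}. Your chain-by-chain accounting (including the two triples carved from the unique full chain when $n=12$) simply makes explicit the bookkeeping the paper compresses into ``each of the 3-cubes in this process are distinct.''
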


\begin{proof}
The cases $d\le 11$ is proven in Corollary \ref{c:11Cube}.

When $12\le d\le 15$ we write $G=Q^d=Q^{d-3}\ssq Q^3$, which partitions $Q^d$ into 3-cubes having levels at most $d-3$.
We choose $\f$ from Corollary \ref{c:Inject}, with $n=d-3$, 
For the highest level cube $A$ we define $B=\f(A)$ and $C=\f(B)$ and $\bz$-stack $A\cup B\cup C$ as in Lemma \ref{l:ABC3Cubes}.
After removing $A\cup B\cup C$ from $G$, the remaining graph $G'$, which is still partitioned into 3-cubes, has a highest level cube $A'$, from which we define $B'=\f(A')$ and $C'=\f(B')$ and $\bz$-stack $A'\cup B'\cup C'$ as in Lemma \ref{l:ABC3Cubes}.
We repeat this process until no cubes at level at least 9 remain; because $\f$ comes from a symmetric chain decomposition, each of the 3-cubes in this process are distinct.
i.e. the resulting graph is partitioned into 3-cubes having levels at most 8, the union of which is $\bz$-stackable by Lemmas \ref{l:LevelkCubes} and \ref{l:7Cube}.

This completes the proof.
\end{proof}

\begin{lem}
\label{l:4CubesLevels}
Let $k\le 15$, $2^k-k\le l\le 2^k$ and $d\ge l+k$.
Then every level-$l$ $k$-cube in $Q^d$ is $\bz$-stackable.
\end{lem}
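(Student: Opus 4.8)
The plan is to bootstrap off the already-established stackability of the abstract cube $Q^k$ for $k\le 15$ (Corollary \ref{c:15Cube}). The key observation is that the hypothesis $2^k-k\le l\le 2^k$ is \emph{exactly} the condition guaranteeing that a level-$l$ $k$-cube $A$ contains a vertex of weight $2^k$. Indeed, the weights occurring in $A$ run from $\lev(A)=l$ at its bottom vertex up to $l+k$ at its top vertex, so some $s\in V(A)$ has $\wt(s)=2^k$ if and only if $l\le 2^k\le l+k$, which rearranges to $2^k-k\le l\le 2^k$. Since $A$ carries $|A|=2^k$ cups under the configuration $\b1$, such an $s$ sits at precisely the distance from $\bz$ needed to absorb and then release all of them in a single terminal move. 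This is the same ``stack onto a weight-$2^k$ vertex, then send everything to $\bz$'' strategy used for the $5\le l\le 8$ case of $3$-cubes in Lemma \ref{l:LevelkCubes}, now carried out uniformly.

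Concretely, first I would fix $s\in V(A)$ with $\wt(s)=2^k$, so that $\dist(s,\bz)=\wt(s)=2^k=|A|$; this $s$ is a legitimate vertex of $Q^d$ since $d\ge l+k\ge 2^k$ (using $l\ge 2^k-k$) and $\wt(s)=2^k\le l+k\le d$. Next I would note that $A$, being one of the $k$-cubes of the product decomposition $Q^d=Q^{d-k}\ssq Q^k$, is a distance-preserving copy of $Q^k$, so that stacking moves internal to $A$ agree with stacking moves in the abstract $Q^k$. Because $Q^k$ is stackable for $k\le 15$ and is vertex-transitive, Fact \ref{f:VTrans} yields that $Q^k$ is $s$-stackable, whence $A$ is $(\b1,s)$-stackable. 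Thus $A$ forms a single $(\b1,\bz)$-feasible block: one stacks all $2^k$ cups of $A$ onto $s$ and then, since the number of cups on $s$ equals $\dist(s,\bz)=2^k$ while $\bz$ still carries its own cup (as $\bz\notin V(A)$), moves them all onto $\bz$. Taking $A$ as the sole non-trivial block of a $\bz$-stacking partition and invoking Lemma \ref{l:Partition} then certifies that $A$ is $\bz$-stackable.

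I expect all the genuine difficulty to have been front-loaded into Corollary \ref{c:15Cube}; once $Q^k$ stackability is available this lemma is a short bootstrap, which is precisely why the hypothesis caps $k$ at $15$. The only steps needing care are the bookkeeping verifications---that $2^k-k\le l\le 2^k$ is equivalent to the presence of a weight-$2^k$ vertex, that $d\ge l+k$ forces $d\ge 2^k$ so this vertex actually occurs in $Q^d$, and that the sub-cube is isometric so the appeal to stackability of $Q^k$ is legitimate. It is instructive to contrast this with the regime $l>2^k$ (for instance the level $9$--$12$ $3$-cubes of Lemma \ref{l:ABC3Cubes}), where no vertex of weight $2^k$ lies inside the cube, the clean single-vertex release is impossible, and the stealing and supplemental-spider constructions are genuinely required; the band $2^k-k\le l\le 2^k$ is exactly the regime in which that complication vanishes.
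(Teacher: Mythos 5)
Your proposal is correct and follows essentially the same route as the paper: the paper's proof likewise invokes Corollary \ref{c:15Cube} to stack the level-$l$ $k$-cube onto one of its weight-$2^k$ vertices and then sends the $2^k$ cups to $\bz$ in one move. Your additional bookkeeping (that $2^k-k\le l\le 2^k$ is exactly the condition for a weight-$2^k$ vertex to exist in the cube, and that the sub-cube is isometric in $Q^d$) is a faithful filling-in of details the paper leaves implicit.
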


\begin{proof}
Corollary \ref{c:15Cube} shows that $Q^k$ is stackable.
Therefore every level-$l$ $k$-cube in $Q^d$ can be stacked onto one of its weight-$2^k$ vertices, with the resulting $2^k$ cups then sent to $\bz$.
\end{proof}

\medskip
\noindent
{\it Proof of Theorem \ref{t:Cubes}.}
The cases $d\le 15$ are proven in Corollary \ref{c:15Cube}.

For $16\le d\le 20$ we write $Q^d=Q^{d-4}\ssq Q^4$, which partitions $Q^d$ into 4-cubes.
We use Lemma \ref{l:4CubesLevels} with $k=4$ to $\bz$-stack all 4-cubes at levels 12 and above.
After removing all such 4-cubes, the remaining graph $G$ is still partitioned into 4-cubes having levels at most 11.
Because $Q^4=Q^1\ssq Q^3$, we can more finely partition $G$ into 3-cubes having levels at most 12.
Now we choose $\f$ from Corollary \ref{c:Inject} with $n=d-3$ (from the representation $Q^d=Q^{d-3}\ssq Q^3$), and use Lemma \ref{l:ABC3Cubes} and the technique of Corollary \ref{c:15Cube} to $\bz$-stack $G$, finishing the proof.
\hfill$\Box$

\section{Conclusion}
\label{s:Conclusion}

One can imagine continuing an inductive approach to proving that $Q^d$ is stackable for all $d$, along the lines of the above.
However, one would need to conceive of a way of uniformizing the ad-hoc methods for each level of Lemma \ref{l:ABC3Cubes} in a manner that would readily generalize to higher dimensions.
Once achieved, generalizing Lemma \ref{l:4CubesLevels} to level-$l$ $k$-cubes with $2^k-l\le l\le 2^k$ would be straightforward.
For example, now that Theorem \ref{t:Cubes} has been proven, Lemma \ref{l:4CubesLevels} is true for $k\le 20$.
We are left believing that there is no mathematical obstruction to the stackability of all cubes and so make the following conjecture.

\begin{cnj}
\label{j:Cubes}
For all $d$, $Q^d$ is stackable.
\end{cnj}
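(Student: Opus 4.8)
The plan is to prove Conjecture~\ref{j:Cubes} by strong induction on $d$, using the cases $d\le 20$ of Theorem~\ref{t:Cubes} as the base. By Fact~\ref{f:VTrans} it suffices to exhibit, for each $d$, an $\bz$-stacking partition and appeal to Lemma~\ref{l:Partition}. At the inductive step I would fix a dimension $K<d$ with $Q^K$ stackable (supplied by the hypothesis), write $Q^d=Q^{d-K}\ssq Q^K$, and sort the resulting $K$-cubes by their level $l=\lev(A)\in\{0,\dots,d-K\}$. Each such cube holds $2^K$ cups, while firing all the cups off a weight-$w$ vertex to $\bz$ consumes exactly $w$ of them; comparing $2^K$ against the weight range $\{l,\dots,l+K\}$ of a cube partitions the levels into a \emph{balanced} band $2^K-K\le l\le 2^K$, a \emph{low} band $l<2^K-K$, and a \emph{high} band $l>2^K$.

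The balanced band is already closed by Lemma~\ref{l:4CubesLevels}: each such cube is stacked onto its unique weight-$2^K$ vertex and then onto $\bz$. The low band is attacked by iterating the refinement $Q^K=Q^1\ssq Q^{K-1}$, pushing each cube down to pieces governed by Lemma~\ref{l:LevelkCubes}. The catch---and the reason the problem does not simply dissolve---is that refining a cube whose level sits near $2^K$ produces smaller cubes whose levels sit near $2^K\gg 2^{K-1}$, i.e. \emph{above} their own balance point; thus the genuine difficulty is not removed but merely relocated to smaller scales, where it ultimately reappears as high-band $3$-cubes, exactly the configurations isolated in Lemmas~\ref{l:7Cube} and~\ref{l:ABC3Cubes}. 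The whole theorem therefore reduces to one statement: a construction that $\bz$-stacks the family of all high-band cubes at a fixed scale, uniformly in the level $l$ and the dimension $d$.

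For that I would generalize the cascades of Lemmas~\ref{l:7Cube} and~\ref{l:ABC3Cubes}. Using the symmetric-chain injection $\f$ of Corollary~\ref{c:Inject} with $n=d-K$, group the high-band cubes into chains $A,\f(A),\f^2(A),\dots$ along which the level drops by one at each step; injectivity of $\f$ makes distinct chains disjoint, so the chains supply property~(\ref{p:partition}) automatically. Inside a chain of $g$ cubes, holding $g\,2^K$ cups in total, one designates certain cubes as \emph{senders}, choosing for each a send-vertex of weight $w_i\in\{l_i,\dots,l_i+K\}$ with $\sum_i w_i=g\,2^K$, and dismantles the remaining cubes to feed the senders through spiders rooted at the chosen vertices. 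The relay vertices $x,y$ of Figures~\ref{fig:11CubesP2} and~\ref{fig:11CubesP3}, and where needed a Hamilton path in a generalized Johnson graph (Theorem~\ref{t:JohnsonHam}) to sequence same-level cubes, provide the geodesics that keep every move distance-feasible, so that property~(\ref{p:distances}) holds and Lemma~\ref{l:Partition} applies to the chain.

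The hard part will be that the group size $g$ cannot be kept bounded. A sender off a weight-$w$ vertex needs $w$ cups, and for a cube deep below the middle layer $w\ge l$ is on the order of $d$, whereas a fixed number of cubes furnishes only $O(2^K)$ cups; balancing $\sum_i w_i=g\,2^K$ therefore forces $g=\Theta(l/2^K)$, growing with the distance to $\bz$. Consequently the neat triples of Lemma~\ref{l:ABC3Cubes} break down once $l$ exceeds $3\cdot 2^K$, and the per-level spider diagrams can no longer be drawn by hand. What is really required is a single \emph{parametric} routing---one that aggregates a growing pile from $\Theta(l/2^K)$ cubes and drives it toward $\bz$ while the length of every intermediate move stays pinned to the running pile size---together with a proof that the weight decomposition $\sum_i w_i=g\,2^K$ is always solvable within the chain's weight ranges. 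I expect this aggregation step, rather than the bookkeeping around it, to be the crux: once a scheme valid for all $l$ is in hand, Lemma~\ref{l:4CubesLevels} extends to every $k$ and the induction closes for all $d$.
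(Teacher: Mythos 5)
The statement you are trying to prove is Conjecture~\ref{j:Cubes}, which the paper deliberately leaves open: Theorem~\ref{t:Cubes} covers only $d\le 20$, and the Conclusion (Section~\ref{s:Conclusion}) explicitly identifies what is missing, namely ``a way of uniformizing the ad-hoc methods for each level of Lemma~\ref{l:ABC3Cubes}.'' Your proposal is, in substance, the same program the authors sketch there --- induct via $Q^d=Q^{d-K}\ssq Q^K$, close the band $2^K-K\le l\le 2^K$ by (the natural extension of) Lemma~\ref{l:4CubesLevels}, refine the low band, and handle the high band by chaining cubes along the symmetric-chain injection $\f$ of Corollary~\ref{c:Inject} --- and it stalls at exactly the same obstruction. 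You name the crux accurately (``a single parametric routing'' aggregating $g=\Theta(l/2^K)$ cubes per group) but never exhibit it, so nothing beyond $d\le 20$ is actually proved. This is a genuine gap, not a bookkeeping omission: the cascades of Lemma~\ref{l:ABC3Cubes} are verified only by the explicit per-level diagrams of Figures~\ref{fig:12Cubes}--\ref{fig:15Cubes}, and the paper itself did not see how to replace them by a uniform scheme.

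To make the missing step concrete: in an $r$-stacking partition, a pile of size $s$ can only move to a cup-bearing vertex at distance exactly $s$, with distances preserved (properties~(\ref{p:feasible}) and~(\ref{p:distances})), so a sender at a weight-$w_i$ vertex must accumulate \emph{exactly} $w_i$ cups through spider legs whose lengths sum to $w_i-1$, all drawn disjointly inside the union of the chain's cubes. Once $l$ exceeds a few multiples of $2^K$, the required $w_i=\Theta(l)$ dwarfs the diameter $O(K+g)$ of that union, so you need both a solvability argument for the constrained weight equation $\sum_i w_i=g\,2^K$ \emph{and} an explicit disjoint-spider layout valid for all $l$ and $d$ --- neither of which your sketch supplies. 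Two further unchecked constraints: $\f$ can be iterated only while the level stays above $(d-K)/2$, so your chains of length $g$ must fit inside the symmetric chains' extent, which you do not verify; and your phrase ``unique weight-$2^K$ vertex'' is inaccurate (such vertices are generally non-unique, though Lemma~\ref{l:4CubesLevels} only needs one). In short, your diagnosis of the difficulty is correct and matches the authors' own, but the proposal is a research plan that re-derives the open problem rather than a proof of the conjecture.
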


Moreover, as $Q^d=\ssq_{i=1}^d P_2$, we believe the same to be true of all higher-dimensional grids, as shown by Theorem \ref{t:Grids} for dimension two.

\begin{cnj}
\label{j:Grids}
For all $d$ and $k_1,\ldots,k_d$, the grid $\ssq_{i=1}^d P_{k_i}$ is stackable.
\end{cnj}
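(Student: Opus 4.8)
The plan is to treat Conjecture \ref{j:Grids} as a research program rather than a theorem with a short proof, because it strictly generalizes Conjecture \ref{j:Cubes}: taking $k_1=\cdots=k_d=2$ recovers $Q^d=\ssq_{i=1}^d P_2$, which is only known to be stackable for $d\le 20$ (Theorem \ref{t:Cubes}). Thus any complete argument would in particular settle the cube case, and the core obstruction is exactly the one flagged in the Conclusion. I would therefore aim for a uniform framework that specializes both to the two-dimensional proof of Theorem \ref{t:Grids} and to the cube constructions of Lemma \ref{l:ABC3Cubes}.

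First I would reduce to a corner target. As in the proof of Theorem \ref{t:Grids}, placing $r$ at coordinates $\bz=(0,\ldots,0)$ lets one split a general target into the $2^d$ orthants meeting at $r$, stacking each orthant --- itself a grid with $r$ at a corner --- independently via Lemma \ref{l:Partition}; so it suffices to prove $\bz$-stackability. With $r=\bz$, the distance of a vertex $x=(x_1,\ldots,x_d)$ is its Manhattan norm $\dist(x,\bz)=\sum_i x_i$, and the diameter is $\sum_i(k_i-1)$. The governing constraint from the Stacking Partition Lemma is that any sub-configuration of $m$ cups must be consolidated on a single vertex at distance exactly $m$ from $\bz$.

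The engine I would use is a symmetric chain decomposition of the grid viewed as the product poset $\prod_i\{0,1,\ldots,k_i-1\}$. Products of chains are known to be symmetric chain orders, generalizing Theorem \ref{t:SCD}, and each symmetric saturated chain is a monotone path in the grid whose vertices have consecutive distances from $\bz$. Along such a chain one can accumulate cups upward and, by mimicking Fact \ref{f:PathsCyclesSpiders} and the spider/stealing moves of Lemma \ref{l:ABC3Cubes}, deposit a clump at a vertex whose distance matches the clump size. The symmetric chains play the role that the injection $\f$ of Corollary \ref{c:Inject} plays for cubes: they provide, for each high vertex, a canonically chosen lower neighbor from which to borrow vertices so as to correct the mismatch between a clump's cardinality and its target distance. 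Since larger $k_i$ enlarge the diameter, the extra room should make the outer levels easy, concentrating all difficulty in the middle levels, exactly as for cubes.

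The hard part will be precisely the uniformization the authors call for: replacing the figure-by-figure, level-by-level adjustments of Lemma \ref{l:ABC3Cubes} with a single gadget that, given a clump of $c$ cups sitting at a vertex of distance $w\ne c$, systematically reroutes through a bounded neighborhood to correct the discrepancy $|c-w|$, and does so in every dimension and for every profile $(k_1,\ldots,k_d)$ at once. Two subsidiary obstacles compound this: one must show the borrowed ``stolen'' vertices can be assigned pairwise-disjointly across all chains (a global matching/injection condition, the analogue of $\f$ being well-defined), and one must verify distance preservation (property \ref{p:distances} of an $r$-stacking partition) for every spider simultaneously. I expect that engineering this uniform discrepancy-correcting spider --- and proving the associated disjointness via an $\f$-type symmetric-chain injection on the product poset --- is the crux on which the whole conjecture turns.
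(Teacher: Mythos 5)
You should note at the outset that the statement you were given is Conjecture~\ref{j:Grids}: the paper contains no proof of it, and its Conclusion explicitly identifies the missing ingredient (``one would need to conceive of a way of uniformizing the ad-hoc methods for each level of Lemma~\ref{l:ABC3Cubes}''). Your submission is accordingly a research program rather than a proof, and you are candid about this; judged as a program, its verifiable components are sound and align with the paper's machinery. Setting $k_1=\cdots=k_d=2$ does recover $Q^d$, so the conjecture subsumes Conjecture~\ref{j:Cubes}, which Theorem~\ref{t:Cubes} settles only for $d\le 20$. Your corner reduction is correct and is the exact higher-dimensional analogue of the quadrant split in the proof of Theorem~\ref{t:Grids}: each of the $2^d$ orthant sub-boxes is isometrically embedded in the full grid (Manhattan distances within a sub-box agree with grid distances), so property~\ref{p:distances} of an $r$-stacking partition holds and Lemma~\ref{l:Partition} lets the orthant stackings compose --- though you should expect boundary bookkeeping of the kind the paper needs for square quadrants and the overlapping paths $R^*$ in Figure~\ref{fig:Grids}. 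Your proposed engine, a symmetric chain decomposition of the product poset $\prod_i\{0,\ldots,k_i-1\}$, is also the right generalization of Theorem~\ref{t:SCD} (products of chains are symmetric chain orders, by de Bruijn--Tengbergen--Kruyswijk), and rank in that poset is indeed distance to $\bz$, so saturated chains are monotone geodesics with consecutive distances.

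The genuine gap is the one you yourself flag, and it is the entire crux: the uniform discrepancy-correcting gadget is named but never constructed. Concretely, a symmetric chain with $t$ vertices spans levels $(n-t+1)/2$ through $(n+t-1)/2$, where $n=\sum_i(k_i-1)$, so stacking it onto its top vertex places $t$ cups at distance $(n+t-1)/2$ from $\bz$ rather than $t$; the discrepancy scales with $n$, which is precisely why the paper cannot dispatch middle levels by chains alone and instead builds the level-specific spider/stealing configurations of Lemma~\ref{l:ABC3Cubes} (Figures~\ref{fig:12Cubes}--\ref{fig:15Cubes}), one picture per level. Your plan inherits, without resolving, the two global constraints that make those constructions delicate: the stolen vertices must be assigned injectively across all chains (the role of $\f$ in Corollary~\ref{c:Inject}), and the order of moves matters because a move requires a nonempty receiving vertex while earlier moves vacate vertices (compare the zero-cup vertices $x$ and $y$ in Figures~\ref{fig:11CubesP2} and~\ref{fig:11CubesP3}). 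Restricted to $k_i=2$ your program reproduces exactly the paper's partial result and no more, so it cannot be counted as a proof of the statement --- unavoidably, since the statement is open; what you have contributed is a correct reduction, the correct choice of decomposition tool, and an accurate diagnosis of where any future proof must do its work.
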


The following enticing conjecture was posited by Veselovac, who verified it for $h\le 3$ and also for the root vertex only when $h\le 5$.

\begin{cnj} 
\cite{Vesel}
\label{j:btrees}
The complete binary tree $T_h$ with depth $h$ is stackable for all $h\ge 1$.
\end{cnj}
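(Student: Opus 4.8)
The plan is to prove the conjecture by induction on the depth $h$, using Lemma~\ref{l:Partition} to reduce both root-stackability and general $r$-stackability to the existence of a suitable $r$-stacking partition. Write $T_h$ as its root $\rho$ together with the two depth-$(h-1)$ subtrees $L$ and $R$ hanging from the children of $\rho$. For an arbitrary target $r$ I would first argue that the binding case is $r=\rho$: if $r$ lies in (say) $L$, then deleting the $r$--$\rho$ path splits $T_h$ into branches, each of which is either a complete binary subtree or a spider and each of which must be stacked onto a vertex at distance exactly its own size from $r$. The branches lying away from $\rho$ have large eccentricity relative to $r$ (a leaf has $\ecc=2h$), hence plenty of room for large piles, so the only genuinely tight region is the branch containing $\rho$, and ultimately the case $r=\rho$ itself, where $\ecc(\rho)=h$ is smallest.

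For $r=\rho$ we have $T_h-\rho=L\sqcup R$, and by symmetry it suffices to exhibit a feasible partition of $L$. The core difficulty is that a vertex at depth $d$ sits at distance $d$ from $\rho$, so any pile delivered from it must have size exactly $d\le h$, whereas $|V(L)|=2^{h}-1$ grows exponentially; thus $L$ must be cut into many small pieces, and — since stacking moves never split a pile and an emptied vertex can never again receive cups — every piece must be assembled so that no leaf is stranded. My plan is to realize these pieces as \emph{spiders}: using Veselovac's theorem (\cite{Vesel}) that $S(\ell_1,\dots,\ell_k)$ with all $\ell_i>1$ is stackable, together with Fact~\ref{f:PathsCyclesSpiders} that a spider stacks onto its split vertex, I would attach to each chosen delivery vertex $w$ of depth $d$ a spider rooted at $w$ with exactly $d$ vertices, stack it onto $w$, and then send the resulting size-$d$ pile directly to $\rho$ at distance $\dist(w,\rho)=d$.

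These spiders will not automatically have sizes matching the depths of their roots, so — exactly as in the proofs of Lemma~\ref{l:7Cube} and Lemma~\ref{l:ABC3Cubes} for cubes — I would coordinate neighbouring pieces: first stack the lower sub-blocks onto their top vertices, and then let a piece that is one cup short ``steal'' a vertex from an adjacent, already-stacked piece (or symmetrically shed a cup into one) so that each final pile has size equal to its root's distance to $\rho$. Processing the depth levels from the bottom up, one level at a time, and recycling the inductive hypothesis on the two copies of $T_{h-1}$, should let these local gadgets tile all of $L$ and assemble the required $r$-stacking partition.

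The main obstacle is precisely this coordination near the root. Close to $\rho$ the only admissible pile sizes are the small depths $1,2,\dots$, while the number of cups to be funneled through each depth is exponential, so the ``seeds'' that let a deep leaf's cup climb must be apportioned with great care; a single miscount either strands a leaf or forces an over-large pile that can never reach $\rho$ (and, once merged, can never shrink). Concretely, the scheme succeeds only if one can exhibit, uniformly in $h$, a family of spider gadgets — one per depth level — whose cup counts balance against the short distances to $\rho$ and whose boundaries mesh across adjacent branches. Replacing the ad hoc, level-by-level constructions (which in the cube setting reach only small dimension, and here only small $h$) by one such uniform, self-similar scheme is the crux of the conjecture and the step I expect to be hardest.
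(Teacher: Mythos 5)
You should first note the status of this statement: it is Conjecture~\ref{j:btrees}, due to Veselovac \cite{Vesel}, and the paper offers no proof --- it is verified only for $h\le 3$ (and for the root target only up to $h\le 5$). So there is no paper proof to match, and your proposal must be judged as a purported resolution of an open problem. As written it is a strategy sketch, not a proof, and you concede as much in your final paragraph: the ``uniform, self-similar scheme'' of spider gadgets whose pile sizes mesh with the distances to $\rho$ is exactly the missing content, and it is the entire difficulty. Everything before that point is framing; nothing in the proposal constructs, even for a single unbounded family of levels, the partition into pieces $(H_i,D_i)$ with $|D_i|=\dist(v_i,\rho)$ that Lemma~\ref{l:Partition} requires.

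Two concrete gaps deserve naming. First, the reduction to $r=\rho$ is unjustified. Your heuristic that off-root branches have ``plenty of room'' is not an argument, and the evidence cuts the other way: \cite{Vesel} verified root-stackability further ($h\le 5$) than full stackability ($h\le 3$), and the paper's own examples (the spider $S(1,1,\ldots)$, stackable only onto its split vertex, and Veselovac's infinite family of trees that are not $r$-stackable for any $r$) show that in trees the peripheral targets, where all cups funnel through a single neighbor, are typically the dangerous ones, not the center. Second, the induction does not close. The inductive hypothesis ``$T_{h-1}$ is stackable'' lets you pile all $2^h-1$ cups of a subtree onto one vertex, but such a pile is useless for the target $\rho$: admissible pile sizes at distance $d$ are exactly $d\le h$, exponentially smaller than $2^h-1$, and merged piles can never be split. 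What you actually need to recurse on is a much stronger statement --- roughly, that $T_{h-1}$ admits partitions into feasible pieces with a prescribed multiset of pile sizes compatible with the depth function --- and no such statement is formulated, let alone proved. The ``stealing'' gadgets borrowed from Lemmas~\ref{l:7Cube} and~\ref{l:ABC3Cubes} also do not transfer for free: in the cube they exploit multiple geodesics and the transitivity of $Q^d$, whereas in a tree geodesics are unique and a vertex stolen from a neighboring piece changes that piece's cup count by exactly one with no slack to reroute. Until the balancing scheme near the root is exhibited explicitly, the conjecture remains open and your proposal is an (honest) outline of where the obstruction lies rather than a proof.
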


\subsection{Open Problems}
\label{ss:OpenProblems}

We mention here a small selection of interesting problems to consider.
Readers can likely think of several others.

\begin{enumerate}
    \item 
    Is there a characterization theorem for the stackability of a graph onto a vertex of eccentricity 3, akin to Lemma \ref{l:Char}?
    \item 
    Can stackable trees be characterized?
    For a general tree $T$, can the vertices $r$ for which $T$ is $r$-stackable be characterized?
    \item 
    Are cartesian products of stackable graphs stackable?
    \item
    Are generalized Johnson graphs of diameter at least 3 stackable?
    \item 
    Are there any non-trees that are not $r$-stackable for any $r$?\footnote{We thank one of the referees for inspiring this question.}
    \item
    What is the computational complexity for determining whether $G$ is $r$-stackable?
\end{enumerate}

\subsection{Variations}
\label{ss:Variations}

Additionally, one can imagine asking a similar range of questions about slightly modified versions of cup stacking.
One could:

\begin{itemize}
    \item 
    generalize the initial configuration --- we have seen in Figures \ref{fig:StackPart}, \ref{fig:11CubesP2}, and \ref{fig:11CubesP3} that this is a necessary generalization from the viewpoint of stacking the parts of an $r$-stacking partition;
    \item
    generalize the stacking target to a set $S$ of vertices --- one must stack onto any subset of $S$, onto exactly $S$, or onto a prescribed number of cups per vertex of $S$;
    \item
    allow for non-geodesic paths --- this would make every super-graph of a stackable graph stackable; for example, graphs with a Hamilton path; or
    \item
    allow for traversing trails or walks instead of just paths.
\end{itemize}

\section*{Acknowledgements}

We thank the referees for several suggestions that greatly improved the exposition of this paper.
In particular, we are grateful to one of the referees for pointing us to the prior work of Hamilton and Veselovac.


\bibliographystyle{acm}
\bibliography{refs}

\end{document}